\DeclareMathAlphabet\mathcalbf{OMS}{cmsy}{b}{n}
\DeclareMathAlphabet\EuScript{U}{eus}{m}{n}
\DeclareMathAlphabet\EuScriptBold{U}{eus}{b}{n}
\numberwithin{equation}{section}
\newcommand{\supp}{\operatorname{supp}}
\def\supp{\mathop{\rm supp}}
\def\bint{{\ifinner\rlap{\bf\kern.30em--}
\int\else\rlap{\bf\kern.35em--}\int\fi}\ignorespaces}
\def\sbint{{\ifinner\rlap{\bf\kern.32em--}
\hspace{0.078cm}\int\else\rlap{\bf\kern.45em--}\int\fi}\ignorespaces}
\def\rr{{\mathbb R}}
\def\nn{{\mathbb N}}
\def\cc{\mathscr H^{n-1}}
\def\cf{{\mathcal F}}
\def\cm{{\mathcal M}}
\def\fz{\infty}
\def\az{\alpha}
\def\supp{{\mathop\mathrm{\,supp\,}}}
\def\loc{{\mathop\mathrm{\,loc\,}}}
\def\dz{\delta}
\def\ez{\epsilon}
\def\wz{\widetilde}
\def\ls{\lesssim}
\def\gs{\gtrsim}
\def\cmoc{{{\rm VMO}(\cc)}}
\def\bmoc{{{\rm BMO}(\cc)}}
\def\r{\right}
\def\lf{\left}
\def\beeqn{\begin{equation}}
\def\eneqn{\end{equation}}
\def\beeqns{\begin{equation*}}
\def\eneqns{\end{equation*}}
\def\beeqa{\begin{eqnarray}}
\def\eneqa{\begin{eqnarray}}
\def\beeqas{\begin{eqnarray*}}
\def\eneqas{\begin{eqnarray*}}
\def\besp{\begin{split}}
\def\ensp{\begin{split}}
\def\noz{\nonumber}
\newtheorem{thm}{Theorem}[section]
\newtheorem{lem}[thm]{Lemma}
\newtheorem{rem}[thm]{Remark}
\theoremstyle{definition}
\numberwithin{equation}{section}
\begin{document}
\allowdisplaybreaks

\title[]
{Cauchy--Szeg\"{o} Commutators on Weighted Morrey Spaces}

\author{ Zunwei Fu, Ruming Gong, Elodie Pozzi and Qingyan Wu}

\address{Zunwei Fu, Department of Mathematics\\
         Linyi University\\
         Shandong, 276005, China
         }
\email{fuzunwei@lyu.edu.cn}

\address{Ruming Gong, Department of Mathematics\\
         Guangzhou University\\
         Guangzhou, 510006 , China
         }
\email{gongruming@gzhu.edu.cn}

\address{Elodie Pozzi, Department of Mathematics and Statistics\\
         Saint Louis University\\
         220 N. Grand Blvd, 63103 St Louis MO, USA
         }
\email{elodie.pozzi@slu.edu}

\address{Qingyan Wu, Department of Mathematics\\
         Linyi University\\
         Shandong, 276005, China
         }
\email{wuqingyan@lyu.edu.cn}

%

\subjclass[2010]{43A80,\ 42B20}
\date{\today}
\keywords{Cauchy--Szeg\"{o} operator, quaternionic Siegel upper half space, commutator, weighted Morrey space}

\begin{abstract}
In the setting of quaternionic Heisenberg group $\mathscr H^{n-1}$, we characterize the boundedness and compactness of commutator $[b,\mathcal C]$ for the Cauchy--Szeg\"o operator $\mathcal C$ on the 
weighted Morrey space $L_w^{p,\,\kappa}(\mathscr H^{n-1})$ with $p\in(1, \infty)$, $\kappa\in(0, 1)$ and
$w\in A_p(\mathscr H^{n-1}).$ More precisely, we prove that $[b,\mathcal C]$ is bounded on $L_w^{p,\,\kappa}(\mathscr H^{n-1})$ if and only if $b\in {\rm BMO}(\cc)$. 
And $[b,\mathcal C]$ is compact on $L_w^{p,\,\kappa}(\mathscr H^{n-1})$ if and only if $b\in {\rm VMO}(\cc)$.

\end{abstract}

\maketitle


\section{Introduction and statement of main results}
\setcounter{equation}{0}

Since 1980s, it is an active direction to develop a theory for quaternionic regular functions of several variables instead of holomophic functions on $\mathbb C^n$. 
 Let $\mathbb H$ be the algebra of quaternion numbers and let $\operatorname{Re}x$ and $\operatorname{Im}x$ denote the real part and imaginary part of $x$ respectively. Then $\operatorname{Re}x=x_1$ and $\operatorname{Im}x=x_2{\bf i}+x_3{\bf j}+x_4{\bf k}$. The $n$-dimensional quaternionic space $\mathbb H^n$ is the collection of $n$-tuples $(q_1, \cdots, q_n)$.
For $l$-th coordinate of a point $q=(q_1, \cdots, q_n)\in\mathbb H^n$ we write $q_l=x_{4l-3}+x_{4l-2}{\bf i}+x_{4l-1}{\bf j}+x_{4l}{\bf k}$.
An $\mathbb H$-valued function $f: \Omega\to\mathbb H$ over a domain $\Omega\subset\mathbb H^n$ is called regular if
$\bar\partial_{q_l} f (q)=0,$ $l=1,\cdots, n$, where
$$\bar\partial_l={{\partial\over\partial x_{4l-3}}}+{\bf i} {\partial\over\partial x_{4l-2}}+{\bf j} {\partial\over\partial x_{4l-1}}+{\bf k} {\partial\over\partial x_{4l}}.$$
So far several fundamental results have been established for the quaternionic counterparts, e.g., Hartogs phenomenon, $k$-Cauchy--Fueter complexes, quaternionic Monge-Ampere equations, etc.  (see for example \cite{alesker1,CSSS,wan-wang,wang19} and the references therein).
Since the quaternions $\mathbb H$ is non-commutative, the behavior of quaternionic regular functions is quite different from holomorphic functions, e.g. the product of two such functions is not regular in general. Hence, proofs and even statements of results are completely different from the standard setting of complex variables.

It is natural to consider the Hardy space of regular functions
over a bounded domain in $\mathbb H^n$, in particular, over the unit ball. By quaternionic Cayley transformation, it is equivalent to consider the Hardy space over the Siegel upper half domain
%
%
%
%
%
$$\mathcal U_n:=\left\{q=(q_1,\cdots, q_n)=(q_1,q')\in\mathbb H^n\mid \operatorname{Re} q_1>|q'|^2\right\},$$
where $q'=(q_2,\cdots,q_n)\in\mathbb H^{n-1}$, whose boundary
$\partial \mathcal U_n:=\{(q_1,q')\in\mathbb H^n\mid \operatorname{Re}q_1=|q'|^2\}$
 is a quadratic hypersurface, which can be identified with the quaternionic Heisenberg group
 $\mathscr H^{n-1}$.

 For any function $F:\mathcal U_n\longrightarrow\mathbb H$, we write $F_\varepsilon$ for its ``vertical translate", where the vertical direction is given by the positive direction of $\operatorname{Re}q_1: F_\varepsilon(q)=F(q+\varepsilon{\bf e})$, where ${\bf e}=(1,0,0,\cdots,0)\in\mathbb H^{n}$. If $\varepsilon>0$, then $F_\varepsilon$ is defined in a neighborhood of $\partial \mathcal U_n$. In particular, $F_\varepsilon$ is defined on $\partial \mathcal U_n$. The Hardy space $\mathcal H^2(\mathcal U_n)$ consists of all regular functions $F$ on $\mathcal U_n$, for which
$$\|F\|_{\mathcal H^2(\mathcal U_n)}:=\Big(\sup_{\varepsilon>0}\int_{\partial \mathcal U_n}|F_\varepsilon(q)|^2d\beta(q)\Big)^{1\over 2}<\infty.$$
According to \cite[Theorem 4.1]{CMW}, a function $F\in \mathcal H^2(\mathcal U_n)$ has boundary value $F^b$ that belongs to $L^2(\mathcal U_n)$.

 The identification of  the boundary $\partial \mathcal U_n$ with the quaternionic Heisenberg group $\mathscr H^{n-1}$ helps us to determine the kernel of the Cauchy--Szeg\"o projection from
 $L^2(\partial \mathcal U_n)$ to $\mathcal H^2(\partial \mathcal U_n)$ \cite{CM08}, which was just obtained  recently in
 \cite{CMW}. To be more explicit, we recall the result as follows.

 \medskip\noindent
{\bf Theorem A (\cite{CMW}).} {\it
The Cauchy--Szeg\"o kernel is given by
\begin{align}\label{cauchy-szego}
S(q,p)=s\Big(q_1+\overline p_1-2\sum_{k=2}^n\overline p_kq_k\Big),
\end{align}
for $p=(p_1,p')=(p_1,\cdots, p_n)\in\mathcal U_n$, $q=(q_1,q')=(q_1,\cdots, q_n)\in\mathcal U_n$, where
\begin{align}\label{s}
s(\sigma)=c_{n-1}{\partial^{2(n-1)}\over \partial x_1^{2(n-1)}}{{\overline \sigma}\over |\sigma|^4},\quad
\sigma=x_1+x_2{\bf i}+x_3{\bf j}+x_4{\bf k}\in\mathbb H,
\end{align}
with the real constant $c_{n-1}$ depending only on $n$.
The Cauchy--Szeg\"o kernel satisfies the reproducing property in the following sense
$F(q)=\int_{\partial \mathcal U_n} S(q,\xi)F^b(\xi)d\beta(\xi),\ q\in\mathcal U_n,
$
whenever $F\in \mathcal H^2(\mathcal U_n)$ and $F^b$ its boundary value on $\partial \mathcal U_n$.
}
\medskip

The quaternionic Heisenberg group  $\mathscr H^{n-1}$
 plays the fundamental role in quaternionic analysis and geometry \cite{Chr,Ivanov2,Wa11,shi-wang16}. Its analytic and geometric
behaviours are different from the usual Heisenberg group in many aspects, e.g., there does not exist nontrivial quasiconformal mapping between the quaternionic Heisenberg groups \cite{Pansu} while
quasiconformal mappings between Heisenberg groups are abundant \cite{Koranyi1,Koranyi2}.

The Cauchy--Szeg\"o projection operator $\mathcal C$ can be defined via the ``vertical translate"
from  Cauchy--Szeg\"o kernel for  $ \mathcal U_n$ by
\begin{align*}
(\mathcal C f)(q)=\lim_{\varepsilon\to 0}\int_{\partial \mathcal U_n} S(q+\varepsilon {\bf e}, p)f(p)d\beta(p),\quad
\forall f\in L^2(\partial \mathcal U_n), \quad q\in \partial \mathcal U_n,
\end{align*}
where the limit exists in the $L^2(\partial \mathcal U_n)$ norm and $\mathcal C(f)$ is the boundary limit of some function in $\mathcal H^2(\mathcal U_n)$.

In view of the action of the quaternionic Heisenberg group, the operator $\mathcal C$ can be explicitly described as a convolution operator on this group:
\begin{align}\label{cs projection gp}
(\mathcal C f)(g)=(f*K)(g)=p.v. \int_{\mathscr H^{n-1}}K(h^{-1}\cdot g)f(h)dh,
\end{align}
where the kernel $K(g)$ is defined  in Section 2 below.
We can write
\begin{align}\label{cs projection gp 1}
(\mathcal C f)(g)=p.v. \int_{\mathscr H^{n-1}}K(g,h)f(h)dh,
\end{align}
where $K(g,h)=K(h^{-1}\cdot g)\ {\rm for}\ g\neq h.$ Note that {\eqref{cs projection gp 1} holds whenever $f$ is an $L^2$ function supported in a compact set, for every $g$ outside the support of $f$.}

In \cite{CDLWW}, Chang et al. verify that the kernel $K(g)$ is a standard Calder\'on--Zygmund kernel with respect to the  quasi-metric $\rho$ (defined in Section 2), that is, it satisfies the standard size and smoothness conditions in terms of $\rho$.

 \medskip\noindent
{\bf Theorem B } (\cite{CDLWW}){\bf.} {\it
Suppose $j=1,\ldots, 4n-4,$ and we denote $Y_j$ the standard left-invariant  vector fields on $\mathscr H^{n-1}$ $($defined  as in \eqref{Y} in Section 2$)$. Then we have
\begin{align}\label{gradient}
\left|Y_jK(g)\right|\lesssim{1\over \rho(g,{\bf0})^{Q+1}},\quad g\in\mathscr H^{n-1}\setminus\{{\bf0}\},\quad
\end{align}
where ${\bf0}$ is the neutral element of $\mathscr H^{n-1}$ and $Q=4n+2$ is the homogeneous dimension of $\mathscr H^{n-1}$.

Then we further have
the Cauchy--Szeg\"o kernel $K(g, h)$ on $\mathscr H^{n-1}$ $(g\not=h)$ satisfies the following conditions.
\begin{align*}
&{\rm (i)}\ \ |K(g, h)|\lesssim {1\over\rho(g,h)^{Q}};\\
&{\rm (ii)}\ \ |K(g,h) - K(g_0,h)|\lesssim   {\rho(g, g_0)\over \rho(g_0,h)^{Q+1} },\quad {\rm if}\ \rho(g_0,h)\geq c\rho(g,g_0);\\
&{\rm (iii)}\ \ |K(g,h) - K(g,h_0) |\lesssim   { \rho(h, h_0)\over \rho(g,h_0)^{Q+1} },\quad {\rm if}\ \rho(g,h_0)\geq c \rho(h,h_0)
\end{align*}
for some constant $c>0$,
where  $\rho$ is defined in Section 2.
}

 \medskip\noindent
{\bf Theorem C} (\cite{CDLWW}){\bf.} {\it
The Cauchy--Szeg\"o kernel $K(\cdot, \cdot)$ on $\mathscr H^{n-1}$ satisfies the following pointwise lower bound: there exist a large positive constant $r_0$ and a positive constant $C$ such that
for every $g\in \mathscr H^{n-1}$, there exists a ``twisted truncated sector'' $S_g\subset \mathscr H^{n-1}$ such that
$$ \inf_{g'\in S_g} \rho(g,g')=r_0 $$ and that for every $g_1\in B(g,1)$ and $g_2\in S_g$ we have
\begin{align*}
|K(g_1, g_2)|\geq  {C\over\rho(g_1,g_2)^Q}.
\end{align*}
Moreover, this sector $S_g$ is regular in the sense that $|S_g|=\infty$ and that for every
$R_2>R_1>2r_0$
$$ \big| \big(B(g,R_2)\backslash B(g,R_1)\big)  \cap S_g\big | \approx  \big| B(g,R_2)\backslash B(g,R_1)\big|$$
with the implicit constants  independent of $g$ and $R_1,R_2$.
}

Using the above two theorems, the authors established the characterization of the BMO space and the VMO space via the commutator $[b,\mathcal C]$ in \cite {CDLWW}. It is well-known that the boundedness and compactness of Calder\'on--Zygmund operator commutators on certain function spaces and their characterizations play an important role in various area, such as harmonic analysis, complex analysis, (nonlinear) PDE, etc. 
Recently,  equivalent characterizations
of the boundedness {and compactness} of commutators were further extended to
Morrey spaces over the Euclidean space  by Di Fazio and Ragusa \cite{DiFazioRagusa91BUMIA} and
Chen et al.\,\cite{CDW12CJM},  and by Tao et al.\,\cite{TYY,TYY2} for the Cauchy integral and {Beurling}-Ahlfors transformation commutator, respectively. {Komori and Shirai [20] proved the boundedness of Calder\'on-Zygmund operator commutators with BMO functions over weighted Morrey spaces.}
In this article,
we consider the boundedness and compactness characterizations of
Cauchy--Szeg\"o operator commutator
$[b, \mathcal C]$ on the weighted Morrey spaces over the quaternionic Heisenberg group.
\smallskip

\ Let $p\in (1,\infty)$.  A non-negative function $w\in L^1_{\rm loc}(\cc)$ is in  $A_{p}(\cc)$ if
\begin{align*}
[w]_{A_p(\cc)}:=\sup_{B\subset\mathcal G}\left( \frac{1}{|B|}\int_{B}w(g)dg\right) \left( \frac{1}{|B|}%
\int_{B}w(g) ^{-1/(p-1)}dg\right) ^{p-1}<\infty,
\end{align*}
where the supremum is taken over all balls $B$ in $\cc$.
A non-negative function $w\in L^1_{\rm loc}(\cc)$ is in  $A_{1}(\cc)$ if there exists a constant $C$
such that for all balls $B\subset\cc$,
\begin{equation*}
\frac{1}{|B|}\int_{B}w( g) dg\leq C\mathop{\rm essinf}%
\limits_{x\in B}w(g) .
\end{equation*}%
For $p=\infty$, we define
\begin{equation*}
A_{\infty }(\cc)= \bigcup_{1\leq p<\infty }A_{p}(\cc).
\end{equation*}

Let $p\in(1,\infty)$, $\kappa\in(0,1)$ and $w\in A_p(\cc)$.
The \emph{weighted Morrey space} $L_w^{p,\,\kappa}(\cc)$ (c.f. \cite{KM} )
 is defined by
\begin{equation*}
L_w^{p,\,\kappa}(\cc):=\lf\{f\in L^p_{\rm loc}(\cc):\,\,\|f\|_{L_w^{p,\,\kappa}(\cc)}<\infty\r\}
\end{equation*}
with
\begin{equation*}
\|f\|_{L_w^{p,\,\kappa}(\cc)}:=\sup_{B}
\lf\{\frac{1}{[w(B)]^\kappa}\int_{B}|f(h)|^pw(h)\,dh\r\}^{1/p}.
\end{equation*}

We get the boundedness characterization of Cauchy--Szeg\"o operator commutator.

\begin{thm}\label{CZ com bounded}
Let $p\in(1,\infty)$, $\kappa\in(0,1)$, $w\in A_p(\cc)$ and $b\in L^1_{\loc}(\cc)$.
Then the Cauchy--Szeg\"o operator commutator
$[b, \mathcal C]$ has the following
boundedness characterization:
\begin{enumerate}
    \item[{\rm(i)}]If $b\in {\rm BMO}(\cc)$, then $[b, \mathcal C]$ is bounded on
    $L_w^{p,\,\kappa}(\cc)$.
    \item[{\rm(ii)}]If $b$ is real-valued and $[b, \mathcal C]$ is bounded on
    $L_w^{p,\,\kappa}(\cc)$, then $b\in {\rm BMO}(\cc)$.
\end{enumerate}
\end{thm}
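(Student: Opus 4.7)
The two implications require different techniques. For (i), the plan is to treat $\mathcal C$ as a Calder\'on--Zygmund operator on the space of homogeneous type $(\cc,\rho,dh)$ via Theorem B and adapt the Komori--Shirai argument from the Euclidean setting. For (ii), the plan is to exploit the pointwise lower bound for $K$ on the twisted sector $S_{g_0}$ from Theorem C together with an Uchiyama--Janson-type test-function argument, adapted to the quaternion-valued and weighted-Morrey setting.

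\emph{Sufficiency (i).} Fix a ball $B=B(g_0,r)$ and decompose $f=f_1+f_2$ with $f_1=f\chi_{2B}$ and $f_2=f-f_1$. The local piece $[b,\mathcal C]f_1$ is controlled by the weighted $L^p(w)$-boundedness of commutators of Calder\'on--Zygmund operators with BMO symbols on spaces of homogeneous type (a Coifman--Rochberg--Weiss-type result that applies here thanks to Theorem B), combined with the doubling estimate $w(2B)\lesssim w(B)$ for $w\in A_p$. For the global piece, decompose dyadically
\begin{equation*}
|[b,\mathcal C]f_2(g)|\lesssim\sum_{k=1}^{\infty}\int_{2^{k+1}B\setminus 2^{k}B}\frac{|b(g)-b(h)|}{\rho(g,h)^{Q}}|f(h)|\,dh,
\end{equation*}
insert $b_{2^{k+1}B}$ in $b(g)-b(h)$, apply the size bound of $K$ from Theorem B, use John--Nirenberg to control $|b_{2^{k+1}B}-b(h)|$, and apply H\"older's inequality in the $A_p$-weighted form. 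Summing the resulting geometric series in $k$ yields a bound by $\|b\|_{{\rm BMO}(\cc)}\|f\|_{L_w^{p,\,\kappa}(\cc)}[w(B)]^{\kappa/p}$; division by $[w(B)]^{\kappa/p}$ and supremum over $B$ give the claim.

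\emph{Necessity (ii).} Fix $B=B(g_0,r)$. Use the regularity of $S_{g_0}$ in Theorem C to choose a ``dual'' ball $\tilde B\subset S_{g_0}$ of radius comparable to $r$ at $\rho$-distance $\approx r$ from $B$. After shrinking $\tilde B$ by a fixed proportion and using continuity of $K$ off the diagonal together with the lower bound in Theorem C, select a unit quaternion $\eta=\eta_B\in\mathbb H$ with
\begin{equation*}
\operatorname{Re}\bigl(\overline{\eta}\,K(g,h)\bigr)\gtrsim r^{-Q},\qquad (g,h)\in B\times\tilde B,
\end{equation*}
so that $|\mathcal C\chi_{\tilde B}(g)|\gtrsim 1$ for $g\in B$. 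Carrying out a Uchiyama--Janson-type argument based on the identity
\begin{equation*}
(b(g)-b_{\tilde B})\,\mathcal C\chi_{\tilde B}(g)=[b,\mathcal C]\chi_{\tilde B}(g)+\mathcal C\bigl((b-b_{\tilde B})\chi_{\tilde B}\bigr)(g),
\end{equation*}
and invoking the assumed $L_w^{p,\,\kappa}$-boundedness of $[b,\mathcal C]$ together with the standard $L^p(w)$-boundedness of $\mathcal C$ (via Theorem B), one extracts
\begin{equation*}
\frac{1}{|B|}\int_B |b(g)-b_B|\,dg\lesssim\|[b,\mathcal C]\|_{L_w^{p,\,\kappa}\to L_w^{p,\,\kappa}}
\end{equation*}
uniformly in $B$, so $b\in{\rm BMO}(\cc)$.

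\emph{Main obstacle.} The technical heart is the quaternion-direction argument in (ii): because $K$ is $\mathbb H$-valued and $\mathbb H$ is non-commutative, the classical real-valued sign trick must be replaced by the joint selection of $\eta_B$ and a proportionally sized sub-ball $\tilde B$ on which $\operatorname{Re}(\overline{\eta}\,K)$ is uniformly positive of order $r^{-Q}$. This requires combining Theorem C with a modulus-of-continuity estimate for $K$ on the sector. All subsequent weighted-Morrey manipulations ($A_p$-doubling, duality between $L_w^{p,\,\kappa}$ and its associate space, passage between $|B|$ and $w(B)$, absorption of the $\mathcal C((b-b_{\tilde B})\chi_{\tilde B})$ term) are standard but need to be tracked carefully.
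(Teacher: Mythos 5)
Your part (i) follows essentially the same route as the paper: split $f=f\chi_{2B}+f\chi_{\cc\setminus 2B}$, use the weighted $L^p_w$ boundedness of Calder\'on--Zygmund commutators (Lemma \ref{wlp}) for the local piece, and the size bound of Theorem B, John--Nirenberg, H\"older and the $A_p$ growth estimates of Lemma \ref{lemlw} for the far piece; this outline is correct, and the remaining work (inserting the weighted average $b_{B,w}$, summing the dyadic series) is routine as you say.

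Part (ii), however, has a genuine gap at exactly the point you dismiss as ``standard absorption.'' In the identity
$(b(g)-b_{\tilde B})\,\mathcal C\chi_{\tilde B}(g)=[b,\mathcal C]\chi_{\tilde B}(g)+\mathcal C\bigl((b-b_{\tilde B})\chi_{\tilde B}\bigr)(g)$,
the last term is estimated, for $g\in B$ and $\tilde B$ at distance $\approx Ar$, by
$\rho(g,\cdot)^{-Q}\int_{\tilde B}|b-b_{\tilde B}|\approx A^{-Q}M(b;\tilde B)$,
i.e.\ by the mean oscillation of $b$ over the \emph{dual} ball. Since only $b\in L^1_{\loc}(\cc)$ is assumed, there is no a priori uniform bound on $M(b;\tilde B)$, so this term cannot be absorbed: iterating produces a chain $M(b;B)\le C+\varepsilon M(b;\tilde B)\le\cdots$ that never closes (the dual of $\tilde B$ is not $B$, and the finiteness of each individual $M(b;B_N)$ does not control its growth along the chain). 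Any attempt to bound it instead via the $L^p_w$ boundedness of $\mathcal C$ runs into the same circularity, because $\|(b-b_{\tilde B})\chi_{\tilde B}\|_{L^p_w}$ is again an oscillation of $b$. The paper avoids this term altogether: it invokes Lemma \ref{lb} (from \cite{CDLWW}), which provides one real component $K_i$ of the $\mathbb H$-valued kernel that is single-signed with the lower bound $\rho(g,u)^{-Q}$ on $B\times\widetilde B$, and then uses the median value $\alpha_{\widetilde B}(b)$ to define the sign sets $E_j\subset B$, $F_j\subset\widetilde B$ with $|F_j|\ge|B|/2$, so that $b(g)-b(u)$ has constant sign and $|b(g)-b(u)|\ge|b(g)-\alpha_{\widetilde B}(b)|$ on $E_j\times F_j$. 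This yields directly
$\int_{E_j}|b(g)-\alpha_{\widetilde B}(b)|\,dg\lesssim\int_{E_j}|[b,\mathcal C]\chi_{F_j}(g)|\,dg$
with no error term, after which H\"older, $\|\chi_{F_j}\|_{L_w^{p,\kappa}}\lesssim[w(\widetilde B)]^{(1-\kappa)/p}$ and $w(\widetilde B)\approx w(B)$ finish the proof. Two further points in your sketch would also need justification: Theorem C is stated at unit scale (fixed $r_0$), so passing to a ball of arbitrary radius $r$ requires a dilation argument; and the existence of a single unit quaternion $\eta_B$ with $\operatorname{Re}(\overline{\eta}K(g,h))\gtrsim r^{-Q}$ uniformly on $B\times\tilde B$ needs a quantitative control of the oscillation of the direction $K/|K|$, which Theorems B--C alone do not give --- the paper sidesteps both issues by quoting the scale-invariant component-sign statement of Lemma \ref{lb}.
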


Based on Theorem \ref{CZ com bounded}, we further obtain the compactness characterization
of Cauchy--Szeg\"o operator commutator.
\begin{thm}\label{CZ com compact}
Let $p\in(1,\infty)$, $\kappa\in(0,1)$, $w\in A_p(\cc)$ and $b\in {\rm BMO}(\cc)$.
Then the Cauchy--Szeg\"o operator commutator $[b,\,\mathcal C]$ has the following
compactness characterization:
\begin{enumerate}
    \item[{\rm(i)}]If $b\in {\rm VMO}(\cc)$, then $[b,\,\mathcal C]$ is compact on
    $L_w^{p,\,\kappa}(\cc)$.
    \item[{\rm(ii)}]If $b$ is real-valued and $[b,\,\mathcal C]$ is compact on
    $L_w^{p,\,\kappa}(\cc)$, then $b\in {\rm VMO}(\cc)$.
\end{enumerate}
\end{thm}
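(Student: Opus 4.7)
The plan is a density and approximation argument. The space ${\rm VMO}(\cc)$ admits a characterization as the ${\rm BMO}(\cc)$-closure of $C^\infty_c(\cc)$ (or a suitable class of smooth, compactly supported, left-invariantly regular functions on $\cc$), so for $b\in{\rm VMO}(\cc)$ we pick $b_j\in C^\infty_c(\cc)$ with $\|b-b_j\|_{{\rm BMO}(\cc)}\to 0$. Since $[b,\mathcal C]-[b_j,\mathcal C]=[b-b_j,\mathcal C]$, Theorem \ref{CZ com bounded} yields
\[
\|[b,\mathcal C]-[b_j,\mathcal C]\|_{L_w^{p,\,\kappa}(\cc)\to L_w^{p,\,\kappa}(\cc)}\lesssim\|b-b_j\|_{{\rm BMO}(\cc)}\longrightarrow 0,
\]
and, since compact operators form a norm-closed subspace of bounded operators, it suffices to prove that $[b_j,\mathcal C]$ is compact on $L_w^{p,\,\kappa}(\cc)$ for each $b_j\in C^\infty_c(\cc)$. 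For this I would establish a Fréchet--Kolmogorov-type precompactness criterion for weighted Morrey spaces on $\cc$, which in this setting requires (a) uniform boundedness of $\{[b_j,\mathcal C]f:\|f\|_{L_w^{p,\,\kappa}(\cc)}\le 1\}$, which is immediate from Theorem \ref{CZ com bounded}; (b) translation equicontinuity, obtained from the smoothness of $b_j$ and the kernel Hölder estimates of Theorem B together with \eqref{gradient}; and (c) uniform decay outside large balls and uniform smallness on infinitesimal balls, both forced by the compact support of $b_j$ and the size bound on $K$.

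\textbf{Part (ii), Necessity.} Argue by contradiction: assume $b$ is real-valued and $[b,\mathcal C]$ is compact on $L_w^{p,\,\kappa}(\cc)$, but $b\notin{\rm VMO}(\cc)$. Then one of the three VMO failure modes occurs, so there exist $\epsilon_0>0$ and balls $B_k=B(g_k,r_k)\subset\cc$ with radii shrinking to $0$, radii tending to $\infty$, or centers escaping to infinity, satisfying
\[
\frac{1}{|B_k|}\int_{B_k}|b(g)-b_{B_k}|\,dg\ge\epsilon_0.
\]
Invoke Theorem C to associate to each $g_k$ its twisted truncated sector $S_{g_k}$, on which $|K(g_1,g_2)|\gtrsim\rho(g_1,g_2)^{-Q}$. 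Construct test functions $f_k$ adapted to $B_k$ — for example, normalized multiples of $\operatorname{sgn}(b-b_{B_k})\chi_{B_k}$ scaled so that $\|f_k\|_{L_w^{p,\,\kappa}(\cc)}\asymp 1$ — and, by pairing with the lower bound on $K$ over a suitable annular piece of $S_{g_k}$ and using that $b-b_{B_k}$ has average oscillation at least $\epsilon_0$ on $B_k$, derive a uniform lower bound $\|[b,\mathcal C]f_k\|_{L_w^{p,\,\kappa}(\cc)}\gtrsim\epsilon_0$ on a ball $\widetilde B_k$ whose $w$-measure has controlled size. Passing to a subsequence whose associated $B_k$ (and $\widetilde B_k$) are pairwise ``far apart'' in the three respective senses, one arranges that $f_k\rightharpoonup 0$ weakly while $\{[b,\mathcal C]f_k\}$ has no norm-convergent subsequence, contradicting compactness.

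\textbf{Main obstacle.} The principal technical difficulty is setting up the Fréchet--Kolmogorov criterion on $L_w^{p,\,\kappa}(\cc)$: the Morrey norm is a supremum over all balls of a localized $L^p(w)$ average, so the equicontinuity and truncation conditions must be verified uniformly across this two-parameter family of balls while the $A_p$ weight and the noncommutative left-invariant structure of $\cc$ prevent any direct reduction to the unweighted Euclidean case. On the necessity side, the parallel difficulty is calibrating the test functions $f_k$ so that $\|f_k\|_{L_w^{p,\,\kappa}(\cc)}$ and $\|[b,\mathcal C]f_k\|_{L_w^{p,\,\kappa}(\cc)}$ are simultaneously controlled from above and below: the choice of the Morrey test-ball must be coordinated with the geometry of the twisted sector $S_{g_k}$ of Theorem C, and it is here that the quaternionic Heisenberg structure enters essentially.
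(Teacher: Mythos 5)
Your part (i) is essentially the paper's argument (approximate $b$ in ${\rm BMO}(\cc)$ by $C^\infty_c$ functions, use Theorem \ref{CZ com bounded} and norm-closedness of the compact operators, then a Fr\'echet--Kolmogorov criterion on $L_w^{p,\,\kappa}(\cc)$); the only difference is that the paper does not apply the criterion to $[b_j,\mathcal C]$ directly but first replaces $\mathcal C$ by smooth truncations $\mathcal C_\eta$, with $\|[b_j,\mathcal C_\eta]-[b_j,\mathcal C]\|\lesssim\eta$ via the maximal operator $\mathcal C_*$ and $\cm$; this is what makes the translation-equicontinuity estimate clean, since the truncated kernel vanishes near the diagonal where the regularity estimate $|K(g,u)-K(g\cdot\xi,u)|\lesssim\|\xi\|\rho(g,u)^{-Q-1}$ is not available. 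In part (ii), however, your sketch has genuine gaps. First, the kernel is $\mathbb H$-valued, and Theorem C only gives $|K(g_1,g_2)|\gtrsim\rho(g_1,g_2)^{-Q}$ (and only at unit scale, for $g_1\in B(g,1)$ and $\rho\ge r_0$); a lower bound on the modulus does not prevent cancellation in $\int K(g,\xi)[b(\xi)-b_{B_k}]f_k(\xi)\,d\xi$. What is needed, and what the paper uses, is the scale-invariant Lemma \ref{lb}: for every ball $B=B(g_0,r)$ there is a companion ball $\widetilde B$ at distance $\approx r$ on which one real component $K_i$ keeps a fixed sign and satisfies \eqref{e-assump cz ker low bdd}; only then does the sign-adapted choice of $f_k$ give a genuine lower bound.

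Second, your test functions $\operatorname{sgn}(b-b_{B_k})\chi_{B_k}$ (normalized) carry no cancellation, and the place where you claim the lower bound is wrong. Writing $[b,\mathcal C]f_k=[b-\az_{B_k}(b)]\mathcal C(f_k)-\mathcal C([b-\az_{B_k}(b)]f_k)$, the second term is the good one, but the first must be shown to be negligible where you test. On the adjacent ball $\widetilde B_k$ the two terms are of comparable size (no smallness of $|b(g)-\az_{B_k}(b)|$ there), so no lower bound follows; this is why the paper evaluates on the far dilated companion balls $B_k^j=\widetilde{A_2^{j-1}B_k}$ with $j\ge K_0(\dz)$ large. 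At such distances the bad term carries the BMO growth factor $\sim j$, and it is only because the paper corrects the test function by $-a_0\chi_{B_k}$ so that $\int f_k=0$ (see \eqref{fj proper-2}) that $|\mathcal C(f_k)(g)|$ gains the extra decay $\rho(g,g_k)^{-(Q+1)}$ instead of $\rho(g,g_k)^{-Q}$; this is exactly what makes $\dz^pA_2^{-jpQ}$ dominate $j^pA_2^{-jp(Q+1)}$ and yields \eqref{lower upper lpbdd riesz comm}. Without the mean-zero correction the error term $j^pA_2^{-jpQ}$ beats the main term for large $j$, and the lower bound (and hence the separation estimate $\|[b,\mathcal C]f_j-[b,\mathcal C]f_{j+m}\|\gtrsim 1$ of Lemma \ref{l-comp contra awy ori}, which also uses this cancellation in \eqref{riz-upper-bdd-case-iii} and the matching upper bound \eqref{lower upper lpbdd riesz comm2}) collapses. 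Finally, the three failure modes of the VMO characterization require more than "pairwise far apart": the paper's case analysis needs the quantitative upper bounds on annuli and the dichotomy between sequences with bounded centers/shrinking radii and escaping sequences, which your sketch does not address.
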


This paper is organized as follows. In Section 2  we recall some necessary preliminaries on quaternionic Heisenberg groups. 
In Section 3  we give the proof of Theorem \ref {CZ com bounded}.
The poof of Theorem \ref {CZ com compact} will be provided in Section 4

{\bf Notation:} Throughout this paper,  $C$ will denote positive constant which is independent of the main parameters, but it may vary from line to line. By $f\lesssim g$,
we shall mean $f\le C g$ for some positive constant 
 $C$. If $f\lesssim g$ and $g\lesssim f$, we then write $f \approx g$.

\section{Preliminaries}



Recall that the space $\mathbb H$ of quaternion numbers forms a division algebra with respect to the coordinate addition and the quaternion multiplication
\begin{align*}
x x'&=(x_1+x_2{\bf i}+x_3{\bf j}+x_4{\bf k})(x'_1+x'_2{\bf i}+x'_3{\bf j}+x'_4{\bf k})\\
&=x_1x'_1-x_2x'_2-x_3x'_3-x_4x'_4
+\left(x_1x'_2+x_2x'_1+x_3x'_4-x_4x'_3 \right){\bf i}\\
&\quad +\left(x_1x'_3-x_2x'_4+x_3x'_1+x_4x'_2 \right){\bf j}
+\left(x_1x'_4+x_2x'_3-x_3x'_2+x_4x'_1 \right){\bf k},
\end{align*}
for any $x=x_1+x_2{\bf i}+x_3{\bf j}+x_4{\bf k}$, $x'=x'_1+x'_2{\bf i}+x'_3{\bf j}+x'_4{\bf k}\in \mathbb H$.
The conjugate $\bar x$ is defined by
$$\bar x=x_1-x_2{\bf i}-x_3{\bf j}-x_4{\bf k},$$
 and the modulus $|x|$ is defined by
$$|x|^2=x \bar x=\sum_{j=1}^4x_j^2.$$
 The conjugation inverses the product of quaternion number in the following sense $\overline{q\cdot \sigma}=\bar \sigma \cdot\bar{q}$ for any $q, \sigma\in\mathbb H$.
It is clear that
\begin{equation}\begin{split}\label{im}
\operatorname{Im}(\bar{x}x')&=\operatorname{Im}\{(x_1-x_2{\bf i}-x_3{\bf j}-x_4{\bf k})(x'_1+x'_2{\bf i}+x'_3{\bf j}+x'_4{\bf k})\}\\
&=\left(x_1x'_2-x_2x'_1-x_3x'_4+x_4x'_3 \right){\bf i}
+\left(x_1x'_3+x_2x'_4-x_3x'_1-x_4x'_2 \right){\bf j}\\
&\quad
+\left(x_1x'_4-x_2x'_3+x_3x'_2-x_4x'_1 \right){\bf k}\\
&=:\sum_{\alpha=1}^3\sum_{k,j=1}^4b_{kj}^\alpha x_kx'_j{\bf i}_\alpha,
\end{split}
\end{equation}
where ${\bf i}_1={\bf i}, {\bf i}_2={\bf j}, {\bf i}_3={\bf k}$, and $b_{kj}^\alpha$ is the $(k,j)$ th entry of the following matrices $b^\alpha$:
\begin{equation*}
b^1:=\left( \begin{array}{cccc}
0&1 & 0 & 0\\
-1 & 0 & 0&0\\
0 & 0 & 0&-1\\
0 & 0 & 1&0
\end{array}
\right ),
\quad
b^2:=\left( \begin{array}{cccc}
0&0 & 1 & 0\\
0 & 0 & 0&1\\
-1 & 0 & 0&0\\
0 & -1 & 0 &0
\end{array}
\right ),
\quad
b^3:=\left( \begin{array}{cccc}
0&0 & 0 & 1\\
0 & 0 & -1&0\\
0 & 1 & 0&0\\
-1 & 0 & 0 &0
\end{array}
\right ).
\end{equation*}

The underling vector space of the quaternion space $\mathbb H^n$ is $\mathbb R^{4n}$ and that of the pure imaginary $\operatorname{Im}\mathbb H$ is $\mathbb R^3$.

 The quaternionic Heisenberg group $\mathscr H^{n-1}$  is the space $\mathbb R^{4n-1}=\mathbb R^3\times\mathbb R^{4(n-1)}$, which is the underlying vector space of $\operatorname{Im}\mathbb H\times \mathbb H^{n-1}$, endowed with the non-commutative multiplication
\begin{align}\label{prod}
(t,y)\cdot (t',y')=\left(t+t'+2\operatorname{Im}\langle y, y'\rangle, y+y'\right),
\end{align}
where $t=t_1{\bf i}+t_2{\bf j}+t_3{\bf k}$, $t'=t'_1{\bf i}+t'_2{\bf j}+t'_3{\bf k}\in \operatorname{Im}\mathbb H$, $y, y'\in\mathbb H^{n-1}$, and $\langle \cdot, \cdot\rangle$ is the inner product defined by
$$\langle y, y'\rangle=\sum_{l=1}^{n-1}\overline y_l y'_l, \quad y=(y_1,\cdots, y_{n-1}),\quad y'=(y'_1,\cdots, y'_{n-1})\in\mathbb H^{n-1}.$$
It is easy to check that the identity of $\mathscr H^{n-1}$ is the origin ${\bf 0}:=(0,0)$, and the inverse of $(t, y)$ is given by $(-t, -y)$.

The boundary of quaternionic Siegel upper half-space $\partial \mathcal U_n$ can be identified with
 the quaternionic Heisenberg group $\mathscr H^{n-1}$  via  the projection
\begin{align}\label{p}
\pi: \partial\mathcal U_n &\longrightarrow \operatorname{Im}\mathbb H\times\mathbb H^{n-1},\\
\nonumber(|q'|^2+x_2{\bf i}+x_3{\bf j}+x_4{\bf k}, q')&\longmapsto (x_2{\bf i}+x_3{\bf j}+x_4{\bf k}, q').
\end{align}
Let $d\beta$ be the Lebesgue measure on $\partial\mathcal U_n$ obtained by pulling back the Haar measure on the group $\mathscr H^{n-1}$ by the projection $\pi$.



By \eqref{im}, the multiplication of the quaternionic Heisenberg group in terms of real variables can be written as (cf. \cite{shi-wang})
\begin{align}\label{law}
(t,y)\cdot(t', y')=\bigg(t_\alpha+t'_\alpha+2\sum_{l=0}^{n-1}\sum_{j,k=1}^4b_{kj}^\alpha y_{4l+k}y'_{4l+j}, y+y' \bigg),
\end{align}
where
$t=(t_1, t_2, t_3)$, $t'=(t'_1, t'_2, t'_3)\in\mathbb R^3$, $\alpha=1,2,3$, $y=(y_1,y_2,\cdots, y_{4n-4})$, $y'=(y'_1,y'_2,\cdots, y'_{4n-4})\in\mathbb R^{4n-4}$.

 The following vector fields are left invariant on the quaternionic Heisenberg group by the multiplication laws of the quaternionic Heisenberg group in \eqref{law}:
\begin{align}\label{Y}
 Y_{4l+j} ={\partial\over \partial y_{4l+j}} + 2 \sum_{\alpha=1}^3\sum_{k=1}^4 b_{kj}^\alpha y_{4l+k} {\partial\over\partial t_\alpha},
 \end{align}
and
 $$[Y_{4l'+k}, Y_{4l+j}]=2\delta_{ll'}\sum_{\alpha=1}^3b_{kj}^\alpha{\partial\over \partial t_\alpha},$$
for $l, l'=0,\cdots,n-2$, $j,k=1,\cdots 4$.
 Then the horizontal tangent space at $g\in\mathscr H^{n-1}$, denoted by $H_g$, is spanned by the left invariant vectors $Y_1(g),\cdots, Y_{4n-4}(g)$. For each $g\in\mathscr H^{n-1}$, we fix a quadratic form
 $\langle \cdot ,\cdot\rangle_H$ on $H_g$ with respect to which the vectors $Y_1(g),\cdots, Y_{4n-4}(g)$
 are orthonormal.


For any $p=(t,y)\in \mathscr H^{n-1}$, we can associate the automorphism $\tau_p$ of $\mathcal U_n$:
\begin{align}\label{tau h}
\tau_p: (q_1,q')\longmapsto\left(q_1+|y|^2+t+2\langle y, q' \rangle, q'+y\right).
\end{align}
It is obviously extended to the boundary $\partial\mathcal U_n$. It is easy to see that the action on $\partial \mathcal U_n$ is transitive. In particular, we have
$$\tau_p: (0,0)\longmapsto (|y|^2+t, y).$$
 And we can write each $q\in\partial \mathcal U_n$ as $q=\tau_g(0)$ for a unique $g\in \mathscr H^{n-1}$. In this correspondence we have that
{$d\beta(q)=dg$}, the invariant measure on $\mathscr H^{n-1}$. Similarly, we write $p\in\partial \mathcal U_n$ in the form $p=\tau_h(0)$. Then from \eqref{tau h} we can see that
 \begin{align*}
 S(q+\varepsilon {\bf e}, p)&=S\big(\tau_{h^{-1}}(q+\varepsilon {\bf e}), \tau_{h^{-1}}(p)\big)
 =S\big(\tau_{h^{-1}}(q)+\varepsilon {\bf e}, 0\big)
 =S\big(\tau_{h^{-1}\cdot g}(0)+\varepsilon {\bf e}, 0\big).
 \end{align*}
 Take $K_\varepsilon(g)=S(\tau_g(0)+\varepsilon {\bf e},0)$, and denote by
 {$f(h):=f(\tau_h(0))=f(p)$   and
 $(\mathcal C f)(g):=(\mathcal C f)(\tau_g(0))=(\mathcal C f)(q)$ by abuse of notations}.
 Then
 \begin{align}\label{kepsilon}
 (\mathcal C f)(g)=\lim_{\varepsilon\to 0}\int_{\mathscr H^{n-1}}K_{\varepsilon}(h^{-1}\cdot g)f(h)dh,\quad f\in L^2(\mathscr H^{n-1}),
 \end{align}
 where the limit is taken in $L^2(\mathscr H^{n-1})$.

 Recall that the convolution on $\mathscr H^{n-1}$ is defined as
 $$(f*\tilde f)(g)=\int_{\mathscr H^{n-1}}f(h)\tilde f(h^{-1}\cdot g)dh.$$
Therefore, \eqref {kepsilon} can be formally rewritten as
$$ (\mathcal C f)(g)=(f*K)(g),$$
where $K$ is the distribution given by
$\lim_{\varepsilon\to 0}K_{\varepsilon}.$
Thus, if $g=(t, y)\in \mathscr H^{n-1}$ with $t=t_1{\bf i}+t_2{\bf j}+t_3{\bf k}$, then
\begin{align}\label{kg}
K(g)=\lim_{\varepsilon\to 0}K_{\varepsilon}(g)=s(|y|^2+t).
\end{align}


For any $g=(t, y)\in \mathscr H^{n-1}$, the homogeneous norm of $g$ is defined by
$$\|g\|=\left(|y|^4+\sum_{j=1}^3|t_j|^2\right)^{1\over 4}.$$
Obviously,
$\|g^{-1}\|=\|-g\|=\|g\|$ and $\|\delta_r(g)\|=r\|g\|$, where $\delta_r, r>0,$ is the dilation on $\mathscr H^{n-1}$, which is defined as
$$\delta_r(t,y)=(r^2t, ry).$$

On $\mathscr H^{n-1}$, we define the quasi-distance $$\rho(h,g)=\|g^{-1}\cdot h\|.$$
This is a standard definition in general stratified Lie group, see for example \cite[Chapter 1]{FS}.
 It is clear that
 $\rho$ is symmetric and satisfies the generalized triangle inequality
\begin{align}\label{rho}
\rho(h,g)\leq C_\rho(\rho(h,w)+\rho(w,g)),
\end{align}
for any $h, g, w\in \mathscr H^{n-1}$ and some $C_\rho>0$.
 Using $\rho$, we define the balls $B(g,r)$ in $\mathscr H^{n-1}$ by $B(g,r)=\{h: \rho(h,g)<r \}$. Then
$$|B(g,r)|\approx r^Q,$$
where $Q=4n+2$ is the homogeneous dimension of $\mathscr H^{n-1}$.

We now recall the BMO and VMO spaces.
Note that $\mathscr H^{n-1}$ falls into the scope of homogeneous group, and hence we have the natural  BMO space   in this setting due to Folland and Stein \cite{FS}. To be self-enclosed, we recall the definition of the BMO space.
$${\rm BMO}(\mathscr H^{n-1})=\{ b\in L^1_{loc}(\mathscr H^{n-1}): \|b\|_{ {\rm BMO}(\mathscr H^{n-1}) }<\infty \},$$
where
$$ \|b\|_{ {\rm BMO}(\mathscr H^{n-1}) }= \sup\limits_{B} {1\over |B|} \int_{B} | b(g)-b_B |dg,$$
 where the supremum is taken over all balls $B\subset \mathscr H^{n-1}$ and $b_B={1\over |B|}\int_Bb(g)dg$. 
 {According to \cite[Lemma 5.2]{CRT}, 
 \begin{equation}\label{bmoeq}
  \|b\|_{ {\rm BMO}(\mathscr H^{n-1}) }\approx \|b\|^p_{ {\rm BMO}^p(\mathscr H^{n-1}) },
 \end{equation}
 for $1\leq p<\infty$, where 
 $$\|b\|_{ {\rm BMO}^p(\mathscr H^{n-1}) }= \sup\limits_{B} \bigg({1\over |B|} \int_{B} | b(g)-b_B |^pdg\bigg)^{1\over p}.$$ }
Similarly we also have the VMO space on $\mathscr H^{n-1}$, which is the closure of $C^\infty_c(\mathscr H^{n-1})$ under the norm of $\|\cdot\|_{ {\rm BMO}(\mathscr H^{n-1}) }$, see \cite{CDLW} for more details of the definition and properties of this VMO space in the more general setting, the stratified Lie group.


\section{Boundedness characterization of  Cauchy--Szeg\"o  commutators}\label{s2.5}

In this section, we will give the proof of Theorem \ref{CZ com bounded}. 
%
Here and hereafter, let
$$M (f;B):=\frac 1{|B|}\int_{B}\lf|f(g)-f_{B}\r|\,dg
\quad \mathrm{with}\quad f_{B}=\frac{1}{|B|}\int_B f(g)\,dg.$$
We first recall the \emph{median value} $\alpha_B(f)$ (c.f. \cite{CDLW}).
%
For any real-valued function $f\in L^1_{\rm loc}(\cc)$ and ball $B\subset \cc$,
let $\alpha_B(f)$ be a real number such that
$$\inf_{c\in\rr}\frac{1}{|B|}\int_B|f(g)-c|\,dg$$
is attained. 
Moreover,  it is known that $\alpha_B(f)$ satisfies that
\begin{equation}\label{median value-1}
   \lf|\lf\{g\in B:\ f(g)>\alpha_B(f)\r\}\r|\le \frac{|B|}2
\end{equation}
and
\begin{equation}\label{median value-2}
   \lf|\lf\{g\in B:\ f(g)<\alpha_B(f)\r\}\r|\le \frac{|B|}2.
\end{equation}

Recall
that an absolutely continuous curve $\gamma: [0,1]\to \mathscr H^{n-1}$ is horizontal if its tangent vectors
$\dot{\gamma}(t),  t\in[0, 1]$, lie in the horizontal tangent space $H_{\gamma(t)}$. By \cite{Chow}, any given two points $p,q\in \mathscr H^{n-1}$ can be connected by a horizontal curve.

 The Carnot--Carath\'eodory metric on $\mathscr H^{n-1}$ as follows. For $g,h\in \mathscr H^{n-1}$,
\begin{align}\label{cc metric}
d_{cc}(g,h):= \inf_\gamma \int_0^1 \langle  \dot{\gamma}(t), \dot{\gamma}(t)\rangle_H^{1\over2}\ dt,
\end{align}
where $\gamma: [0,1]\to \mathscr H^{n-1}$ is a horizontal Lipschitz curve with $\gamma(0)=g$, $\gamma(1)=h$.
It is known that the Carnot--Carath\'eodory metric $d_{cc}$ is left-invariant,  and it is equivalent to the homogeneous metric $\rho$ in the sense that: there exist $C_d, \tilde C_d>0$ such that for any $g, h\in\mathscr{H}^{n-1}$ (see \cite[(1.21)]{IV}),
\begin{align}\label{norm-equ}
C_d\rho(g,h)\leq d_{cc}(g,h)\leq \tilde C_d\rho(g,h).
 \end{align}

In order to {prove}  Theorem \ref{CZ com bounded}, we need the following lemmas.
Denote $w (B):=\int_{B}w (g)\,dg$.

\begin{lem}\label{lemlw}
 Let $w\in A_{p}(\cc), p\geq1$. Then there
exist constants $\hat C_{1}, \hat C_{2}, \hat C>0$ and $\sigma\in(0,1)$ such that
\begin{equation*}
\hat C_{1}\left(\frac{|E|}{|B|}\right)^{p}\leq\frac{w(E)}{w\left(B\right)}\leq
\hat C_{2}\left(\frac{|E|}{|B|}\right)^{\sigma}
\end{equation*}
for any measurable subset $E$ of a ball $B$. Especially, for any $\lambda >1$,
\begin{equation*}
w\left( B\left( g_{0},\lambda R\right) \right) \leq \hat C\lambda ^{Qp}w\left(
B\left( g_{0},R\right) \right).
\end{equation*}
\end{lem}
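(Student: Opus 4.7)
The statement is the standard pair of comparison estimates for $A_p$ weights, transposed from $\R^n$ to the homogeneous group $\mathscr{H}^{n-1}$. Since $(\mathscr{H}^{n-1},\rho,dg)$ is a space of homogeneous type (the Haar measure satisfies $|B(g,r)|\approx r^Q$, hence is doubling), the classical arguments of Muckenhoupt/Coifman--Fefferman carry over verbatim. My plan is to deduce the lower bound directly from the $A_p$ condition via Hölder's inequality, deduce the upper bound from a reverse Hölder inequality, and then obtain the doubling statement as a corollary.

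\textbf{Step 1 (lower bound).} For any measurable $E\subset B$, split
\[
|E|=\int_E w(g)^{1/p}\,w(g)^{-1/p}\,dg
\]
and apply Hölder's inequality with exponents $p$ and $p'=p/(p-1)$, giving
\[
|E|\le \Big(\int_E w\,dg\Big)^{1/p}\Big(\int_B w^{-1/(p-1)}\,dg\Big)^{(p-1)/p}.
\]
Dividing by $|B|$ and invoking the $A_p$ bound
$\big(\tfrac{1}{|B|}\int_B w^{-1/(p-1)}\big)^{p-1}\le [w]_{A_p}\,|B|/w(B)$
yields
\[
\Big(\frac{|E|}{|B|}\Big)^p\le [w]_{A_p}\,\frac{w(E)}{w(B)},
\]
which is the desired lower bound with $\hat C_1=[w]_{A_p}^{-1}$.

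\textbf{Step 2 (upper bound).} The key input is the reverse Hölder inequality for $A_p$ weights: there exist $q>1$ and $C>0$, depending only on $[w]_{A_p}$ and the doubling constants, such that
\[
\Big(\frac{1}{|B|}\int_B w^q\,dg\Big)^{1/q}\le \frac{C}{|B|}\int_B w\,dg
\]
for all balls $B\subset\mathscr{H}^{n-1}$. This is a standard self-improvement property and is available in any space of homogeneous type; I would either cite the corresponding result in this setting or reprove it via a Calderón--Zygmund decomposition adapted to $\rho$. Granted this, Hölder's inequality with exponents $q$ and $q'$ gives
\[
w(E)=\int_E w\,dg\le \Big(\int_B w^q\,dg\Big)^{1/q}|E|^{1-1/q}\le C\,\frac{w(B)}{|B|}\,|B|^{1/q}|E|^{1-1/q}=C\,w(B)\Big(\frac{|E|}{|B|}\Big)^{\sigma},
\]
with $\sigma:=1-1/q\in(0,1)$.

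\textbf{Step 3 (doubling).} Apply the lower bound of Step 1 with $E=B(g_0,R)\subset B=B(g_0,\lambda R)$. Using $|B(g_0,r)|\approx r^Q$ we get
\[
\frac{w(B(g_0,R))}{w(B(g_0,\lambda R))}\ge \hat C_1\Big(\frac{|B(g_0,R)|}{|B(g_0,\lambda R)|}\Big)^p\gtrsim \hat C_1\lambda^{-Qp},
\]
which rearranges to $w(B(g_0,\lambda R))\le \hat C\,\lambda^{Qp}\,w(B(g_0,R))$, as claimed.

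The only nontrivial obstacle is the reverse Hölder inequality on $\mathscr{H}^{n-1}$; once it is in hand (either by citation to the general homogeneous-space theory or by adapting the standard dyadic proof to the doubling quasi-metric measure space $(\mathscr{H}^{n-1},\rho,dg)$), Steps 1 and 3 are essentially algebraic manipulations of the $A_p$ definition and the explicit form $|B(g,r)|\approx r^Q$.
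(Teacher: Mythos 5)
Your argument is correct, and it is the canonical proof of these estimates. For comparison: the paper does not prove Lemma \ref{lemlw} at all --- it is quoted as a standard property of Muckenhoupt weights, valid on any space of homogeneous type and in particular on $(\mathscr H^{n-1},\rho,dg)$ where $|B(g,r)|\approx r^Q$ --- so your proposal supplies exactly the argument the authors are implicitly relying on: the lower bound from H\"older's inequality combined with the $A_p$ condition, the upper bound from the reverse H\"older inequality (which indeed holds for $A_p$ weights on spaces of homogeneous type and should be cited or reproved via a Calder\'on--Zygmund/dyadic argument adapted to the doubling quasi-metric, as you say), and the doubling estimate as a direct consequence of the lower bound together with $|B(g,r)|\approx r^Q$. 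Your Steps 1--3 check out as written for $p>1$.

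The only point worth tightening is the endpoint $p=1$, which the lemma formally allows: your H\"older splitting with exponents $p$ and $p'=p/(p-1)$ degenerates there, and the dual weight $w^{-1/(p-1)}$ is undefined. The fix is immediate --- for $w\in A_1$ use
$|E|=\int_E w\,w^{-1}\,dg\le w(E)\,\bigl(\essinf_{B}w\bigr)^{-1}\le C\,w(E)\,|B|/w(B)$
directly from the $A_1$ condition --- but you should record it (note that simply embedding $A_1\subset A_p$ for $p>1$ would only give the weaker exponent $p$ in the lower bound, not the exponent $1$ the statement asserts). Since the paper only ever applies the lemma with $p\in(1,\infty)$, this is a cosmetic rather than substantive gap.
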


\begin{lem}[\cite {DGKLWY}]\label{wlp}
Let $b\in {\rm BMO}(\cc)$ and $T$ be a Calder\'on-Zygmund operator. If $1<p<\infty$ and $w\in A_p(\cc)$, then
$[b,T]$ is bounded on $L^p_{w}(\cc)$.
\end{lem}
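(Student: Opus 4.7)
The plan is to adapt the classical Coifman--Rochberg--Weiss / Str\"omberg sharp-maximal-function technique to the space of homogeneous type $(\cc,\rho,dg)$. The main intermediate step is to establish the pointwise bound
\begin{equation*}
M^{\#}\bigl([b,T]f\bigr)(g)\lesssim \|b\|_{{\rm BMO}(\cc)}\Bigl(M_r(Tf)(g)+M_s(f)(g)\Bigr),
\end{equation*}
valid for any choice of $r,s>1$, where $M^{\#}$ denotes the Fefferman--Stein sharp maximal function over balls in $\cc$, $M$ is the Hardy--Littlewood maximal operator, and $M_\sigma f:=\bigl[M(|f|^\sigma)\bigr]^{1/\sigma}$.

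To prove this pointwise estimate, fix a ball $B=B(g_0,r_B)$ containing $g$ and write the commutator identity
\begin{equation*}
[b,T]f=(b-b_B)\,Tf-T\bigl((b-b_B)f\chi_{2B}\bigr)-T\bigl((b-b_B)f\chi_{\cc\setminus 2B}\bigr).
\end{equation*}
Selecting $c_B:=T\bigl((b-b_B)f\chi_{\cc\setminus 2B}\bigr)(g_0)$ as the subtraction constant inside the oscillation average defining $M^\#$, three contributions must be estimated. The term $(b-b_B)Tf$ is handled by H\"older's inequality and the John--Nirenberg type equivalence \eqref{bmoeq}, producing a factor $\|b\|_{{\rm BMO}(\cc)}M_r(Tf)(g)$. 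The local piece $T((b-b_B)f\chi_{2B})$ is controlled by the $L^{r'}$-boundedness of $T$ combined with H\"older and \eqref{bmoeq}, yielding $\|b\|_{{\rm BMO}(\cc)}M_s(f)(g)$. The far-field piece is the delicate one: for $h\in B$ one writes $T((b-b_B)f\chi_{\cc\setminus 2B})(h)-c_B$ using the kernel, performs a dyadic annular decomposition $\cc\setminus 2B=\bigsqcup_{k\ge 1}(2^{k+1}B\setminus 2^k B)$, and on each annulus exploits the smoothness bound (ii) of Theorem B to gain a factor $2^{-k}$, while estimating the $b$-oscillation on $2^kB$ by $k\|b\|_{{\rm BMO}(\cc)}$ thanks to the doubling of $(\cc,\rho,dg)$ and the standard BMO growth lemma; summing the geometric series produces $\|b\|_{{\rm BMO}(\cc)}M_s(f)(g)$.

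For the weighted conclusion, invoke the openness (self-improvement) of the $A_p$ class on spaces of homogeneous type: there exists $\sigma\in(1,p)$ with $w\in A_{p/\sigma}(\cc)$. Setting $r=s=\sigma$, the operator $M_\sigma$ is bounded on $L^p_w(\cc)$; since $T$ is a Calder\'on--Zygmund operator and $w\in A_p(\cc)$, the classical weighted theory gives $T:L^p_w(\cc)\to L^p_w(\cc)$, so $M_\sigma(Tf)$ is also controlled by $\|f\|_{L^p_w(\cc)}$. Combining these with the Fefferman--Stein estimate $\|F\|_{L^p_w(\cc)}\lesssim \|M^{\#}F\|_{L^p_w(\cc)}$ valid for $w\in A_\infty$ (applied to $F=[b,T]f$ after a standard truncation/approximation to guarantee a priori membership in $L^p_w$) yields the boundedness.

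The principal obstacle is the far-field step of the pointwise estimate: one must verify that Theorem B(ii), the doubling of $\rho$, and \eqref{bmoeq} combine to give a geometric decay over dyadic annuli strong enough to beat the logarithmic growth $k\|b\|_{{\rm BMO}(\cc)}$ of $b$-oscillations. All remaining ingredients---openness of $A_p(\cc)$, weighted $L^p_w$-boundedness of Calder\'on--Zygmund operators, and the Fefferman--Stein sharp maximal inequality---are classical on spaces of homogeneous type and available in $\cc$ without modification.
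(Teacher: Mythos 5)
Your argument is sound, but note that the paper itself offers no proof of this lemma at all: it is quoted verbatim from \cite{DGKLWY}, where the result is obtained as a special case of a two-weight, Bloom-type commutator theorem on spaces of homogeneous type, proved there by dyadic/sparse-domination machinery. Your route is the classical Coifman--Rochberg--Weiss/Str\"omberg scheme: the pointwise bound $M^{\#}([b,T]f)\lesssim\|b\|_{{\rm BMO}(\cc)}(M_r(Tf)+M_sf)$ via the splitting $f=f\chi_{2B}+f\chi_{\cc\setminus 2B}$ and the constant $c_B=T((b-b_B)f\chi_{\cc\setminus 2B})(g_0)$, with the far-field annular decomposition beating the $k\|b\|_{{\rm BMO}(\cc)}$ growth by the $2^{-k}$ gain from the kernel regularity (Theorem B(ii)), followed by openness of $A_p(\cc)$, boundedness of $M_\sigma$ and of $T$ on $L^p_w(\cc)$, and the Fefferman--Stein inequality for $w\in A_\infty$ (with the usual a priori truncation). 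All of these ingredients are indeed available on $(\cc,\rho,dg)$ as a space of homogeneous type, so your proof is correct and self-contained; what it buys is an elementary, one-weight argument using only the standard kernel estimates and John--Nirenberg, whereas the cited approach buys generality (two weights, Bloom-type bounds) at the cost of heavier dyadic technology. The only points to state carefully are the a priori finiteness needed to apply the Fefferman--Stein estimate and the exponent bookkeeping in the local term (take $1<t<s$, use $L^t$-boundedness of $T$ and H\"older together with \eqref{bmoeq}), both of which are routine.
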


\begin{lem}[\cite{CDLWW}]\label{lb}
Since $K(g,h)$ is $\mathbb H$-valued, we write
$$ K(g,h) = K_1(g,h)+K_2(g,h){\bf i}+K_3(g,h){\bf j}+K_4(g,h){\bf k}, $$
where each $K_i(g,h)$ is real-valued, $i=1,2,3,4$.
Then there is  at least one of the $K_i$ above satisfies the following argument:

There exist positive constants $3\le A_1\le A_2$ such that for any {ball} $B:=B(g_0, r)\subset \mathscr H^{n-1}$, there exist another {ball} $\widetilde B:=B(h_0, r)\subset \mathscr H^{n-1}$
such that $A_1 r\le d_{cc}(g_0, h_0)\le A_2 r$, 
and  for all $(g,h)\in ( B\times \widetilde{B})$,
$K_i(g, h)$ does not change sign
and
\begin{equation}\label{e-assump cz ker low bdd}
|K_i(g, h)|\geq  {C\over\rho(g,h)^{Q}}.
\end{equation}

\end{lem}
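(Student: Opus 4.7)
The plan is to bootstrap Theorem C (which provides a pointwise lower bound for $|K(g_1,g_2)|$ on a twisted truncated sector) into a component-wise statement by combining it with the smoothness estimate in Theorem B and an exhaustion argument. The starting observation is that at each point of the sector $S_{g_0}$ at which $|K(g,h)|\geq C\rho(g,h)^{-Q}$, the quaternion-valued kernel $K=K_1+K_2\mathbf{i}+K_3\mathbf{j}+K_4\mathbf{k}$ must satisfy $|K_i(g,h)|\geq (C/4)\rho(g,h)^{-Q}$ for at least one index $i\in\{1,2,3,4\}$. The real work is upgrading this pointwise statement, where the winning component and its sign may a priori vary, into a statement on a whole companion ball $\widetilde B$ with $d_{cc}(g_0,h_0)\approx r$.

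First I would exploit scale- and translation-invariance. The explicit formula \eqref{kg} together with $K(\delta_r g)=r^{-Q}K(g)$ and the left-invariance of the convolution structure imply that it suffices to prove the lemma at unit scale around the identity; rescaling then recovers the statement for every $B=B(g_0,r)$ with the same absolute constants. Next, I would invoke Theorem C at the identity to obtain the regular sector $S_{\mathbf 0}$ with $|K(g_1,g_2)|\geq C\rho(g_1,g_2)^{-Q}$ for $g_1\in B(\mathbf 0,1)$ and $g_2\in S_{\mathbf 0}$, and partition an annular piece of $S_{\mathbf 0}$ at scale $R$ (with $R>2r_0$ large but absolute) into eight Borel subsets $A_R^{i,\pm}$ according to which component $K_i(\mathbf 0,\cdot)$ is dominant and which sign it carries. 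The measure-regularity clause of Theorem C guarantees that the annulus has measure comparable to $R^Q$, so by pigeonholing at least one of these eight pieces captures a definite fraction of that measure.

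To pass from a measurable set to an honest ball I would use the gradient estimate \eqref{gradient}: since $|Y_j K|\lesssim\rho^{-Q-1}$, each $K_i(\mathbf 0,\cdot)$ is Lipschitz in the Carnot--Carath\'eodory metric on the annulus with Lipschitz constant $\lesssim R^{-Q-1}$. Hence at any point $h_0\in A_R^{i,\pm}$ where $|K_i(\mathbf 0,h_0)|\geq cR^{-Q}$, there is an absolute constant $\eta>0$ so that on $B(h_0,\eta R)$ both the sign is preserved and $|K_i(\mathbf 0,h)|\geq (c/2)R^{-Q}$. Combined with the analogous smoothness in the first variable (property (ii) in Theorem B), one obtains the same bound and sign on the product $B(\mathbf 0,\eta R)\times B(h_0,\eta R)$, at the cost of possibly shrinking $\eta$. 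Rescaling by $r$ and translating by $g_0$ produces $\widetilde B=B(h_0,r)$ with $d_{cc}(g_0,h_0)$ comparable to $R/\eta\cdot r$, so we may take $A_1=R/\eta$, $A_2=2R/\eta$ (all absolute).

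The main obstacle is the step that turns pointwise magnitude information into \emph{sign-consistent} magnitude information across a full ball. The bound $|K|\geq C\rho^{-Q}$ controls only the Euclidean norm of the quaternion value and leaves open rapid oscillation among the four components; one must lean essentially on the first-order smoothness from \eqref{gradient} to prevent any individual $K_i$ from flipping sign at scales smaller than~$r$. A secondary subtlety is that the winning index $i$ might conceivably depend on the pair $(g_0,r)$, but left-invariance and homogeneity reduce everything to the unit-scale setting at $\mathbf 0$, so a single $i$ can be fixed once and for all, matching the phrasing \emph{at least one of the $K_i$} in the statement.
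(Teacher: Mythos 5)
The paper itself does not prove this lemma: it is imported verbatim from \cite{CDLWW}, so there is no internal argument to compare against. Your reconstruction from Theorems B and C is sound and essentially self-contained: the reduction to unit scale at the identity is legitimate because $K(g,h)=K(h^{-1}\cdot g)$ is exactly left-invariant and, by \eqref{kg} and the homogeneity of $s$, satisfies $K(\delta_r g)=r^{-Q}K(g)$, so all four components scale by the same positive factor and the winning index $i$ (and the constants $A_1,A_2,C$) transfer to every ball $B(g_0,r)$ — this is precisely what makes the single uniform $i$ in the statement come out cleanly. Picking a point $h_0$ of the sector $S_{\mathbf 0}$ at distance $\approx R$ (nonempty by the measure-regularity clause of Theorem C; the eightfold pigeonhole is harmless but not actually needed, since one point already singles out a dominant component and sign), and then using the smoothness estimates of Theorem B to freeze that component's size and sign on a product of balls, is the right mechanism. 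Two small wrinkles you should tighten: the lemma requires the first factor to be the \emph{whole} ball $B(\mathbf 0,1)$, not a shrunken $B(\mathbf 0,\eta R)$; this is repaired not by shrinking $\eta$ but by taking $R$ large and absolute, since the oscillation of $K(\cdot,h)$ over the unit ball is $\lesssim R^{-(Q+1)}\ll R^{-Q}$ by Theorem B(ii), and likewise the oscillation in the second variable over $B(h_0,1)$ is $\lesssim R^{-(Q+1)}$, so with $R$ large both balls can be taken of radius $1$ (hence radius $r$ after rescaling, matching the statement that $\widetilde B=B(h_0,r)$ has the same radius). Second, the distance bookkeeping should read $d_{cc}(g_0,h_0)\approx Rr$ (using the exact homogeneity and left-invariance of $d_{cc}$, or \eqref{norm-equ}), not $R/\eta\cdot r$; this only affects the harmless identification of $A_1,A_2$. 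With these cosmetic adjustments your argument is a correct proof of the lemma from the two quoted theorems, which is in the same spirit as the derivation in the cited source.
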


\begin{proof}[Proof of Theorem \ref {CZ com bounded}]
(i).  Let $1<p<\infty$. It is sufficient to prove that 
$$\lf\{\frac{1}{[w(B)]^\kappa}\int_{B}|[b,\mathcal C](g)|^pw(g)\,dg\r\}^{1/p}\lesssim  \|b\|_{ {\rm BMO}(\mathscr H^{n-1}) }\|f\|_{L_w^{p,\,\kappa}(\cc)} ,$$
holds for any ball $B$.

Now fix a ball $B=B(g_0, r)$ and decompose $f=f\chi_{2B}+f\chi_{\cc\setminus 2B}=:f_1+f_2$. Then
\begin{align*}
&{1\over w(B)^{\kappa}}\int_B\left|[b,\mathcal C] f(g)  \right|^pw(g)dg\\
&\lesssim \bigg\{ {1\over w(B)^{\kappa}}\int_B\left|[b,\mathcal C] f_1(g)  \right|^pw(g)dg+{1\over w(B)^{\kappa}}\int_B\left|[b,\mathcal C] f_2(g)  \right|^pw(g)dg\bigg\}\\
&=:I+II.
\end{align*}

For the term $I$, by Lemma \ref{wlp}, we can obtain
\begin{align*}
{1\over w(B)^{\kappa}}\int_B\left|[b,\mathcal C] f_1(g)  \right|^pw(g)dg
&\leq{1\over w(B)^{\kappa}} \int_{\cc}\left|[b,\mathcal C] f_1(g)\right|^pw(g)dg\\
&\lesssim \|b\|^p_{ {\rm BMO}(\mathscr H^{n-1}) }{1\over w(B)^{\kappa}}\int_{2B}|f(g)|^pw(g)dg\\
&\lesssim\|b\|^p_{ {\rm BMO}(\mathscr H^{n-1}) }\|f\|^p_{L_w^{p,\,\kappa}(\cc)}.
\end{align*}
Thus, we have
$$\|[b,\mathcal C] f_1\|_{L_w^{p,\,\kappa}(\cc)}\lesssim\|b\|^p_{ {\rm BMO}(\mathscr H^{n-1}) }\|f\|^p_{L_w^{p,\,\kappa}(\cc)}.$$

For the term $II$, observe that for $g\in B$, by Theorem B, we have
\begin{align*}
&\left|[b,\mathcal C] f_2(g)\right|^p\\
&\leq\bigg(\int_{\cc}|b(g)-b(u)| |K(g,u)| |f_2(u)|du\bigg)^p\\
&\lesssim \bigg( \int_{\cc\setminus 2B}{|b(g)-b(u)|\over\rho(g,u)^Q} |f(u)|du\bigg)^p\\
&\lesssim \bigg(\int_{\cc\setminus 2B}{ |f(u)|\over\rho(g_0,u)^Q}\left\{\left|b(g)-b_{B,w}\right|+\left|b_{B,w}-b(u)\right|\right\}du\bigg)^p\\
&\lesssim\bigg(\int_{\cc\setminus 2B}{ |f(u)|\over\rho(g_0,u)^Q}du \bigg)^p \left|b(g)-b_{B,w}\right|^p
+\bigg(\int_{\cc\setminus 2B}{ |f(u)|\over\rho(g_0,u)^Q}\left|b_{B,w}-b(u)\right|du \bigg)^p,
\end{align*}
where 
$b_{B,w}={1\over w(B)}\int_{B}b(g)w(g)dg.$
Then we have
\begin{align*}
&{1\over w(B)^\kappa}\int_B\left|[b,\mathcal C] f_2(g)\right|^pw(g)dg\\
&\lesssim 
{1\over w(B)^\kappa} \bigg(\int_{\cc\setminus 2B}{ |f(u)|\over\rho(g_0,u)^Q}du \bigg)^p\int_B\left|b(g)-b_{B,w}\right|^pw(g)dg  \\
&\quad+\bigg(\int_{\cc\setminus 2B}{ |f(u)|\over\rho(g_0,u)^Q}\left|b_{B,w}-b(u)\right|du \bigg)^pw(B)^{1-\kappa}\\
&=:III+IV.
\end{align*}

For $III$, by the H\"older inequality, Theorem 3.5 in \cite{Ho} and Lemma \ref{lemlw}, we have
\begin{align*}
III&\lesssim \|f\|^p_{L_w^{p,\kappa}(\cc)}{1\over w(B)^\kappa}\bigg(\sum_{j=1}^{\infty} {1\over w(2^{j+1}B)^{{1-\kappa\over p}}} \bigg)^p\int_B\left|b(g)-b_{B,w}\right|^pw(g)dg\\
&\lesssim\|b\|_{{\rm BMO}(\cc)}\|f\|^p_{L_w^{p,\kappa}(\cc)}\bigg(\sum_{j=1}^{\infty} {w(B)^{1-\kappa\over p}\over w(2^{j+1}B)^{{1-\kappa\over p}}} \bigg)^p\\
&\lesssim\|b\|_{{\rm BMO}(\cc)}\|f\|^p_{L_w^{p,\kappa}(\cc)}.
\end{align*}

For $IV$, by the H\"older inequality, we can get
\begin{align*}
IV&\lesssim \bigg(\sum_{j=1}^{\infty}{1\over |2^jB|}\int_{2^{j+1}B}|f(u)| \left| b(u)-b_{B,w}\right| du \bigg)^p w(B)^{1-\kappa}\\
&\lesssim\bigg\{\sum_{j=1}^{\infty}{1\over |2^jB|}\left(\int_{2^{j+1}B}|f(u)|^p w(u)du\right)^{1\over p}\\
&\qquad \times\left(\int_{2^{j+1}B} \left| b(u)-b_{B,w}\right| ^{p'}w(u)^{1-p'}du\right)^{1\over p'} \bigg\}^p
 w(B)^{1-\kappa}\\
 &\lesssim \|f\|^p_{L_w^{p,\kappa}(\cc)}\bigg\{\sum_{j=1}^{\infty}{w(2^{j+1}B)^{\kappa\over p}\over |2^jB|}
\left(\int_{2^{j+1}B} \left| b(u)-b_{B,w}\right| ^{p'}w(u)^{1-p'}du\right)^{1\over p'} \bigg\}^p
 w(B)^{1-\kappa}.
\end{align*}

Observe that
\begin{align*}
&\left(\int_{2^{j+1}B} \left| b(u)-b_{B,w}\right| ^{p'}w(u)^{1-p'}du\right)^{1\over p'} \\
&\leq\left(\int_{2^{j+1}B} \left\{\left| b(u)-b_{2^{j+1}B, w^{1-p'}}\right|+\left|b_{2^{j+1}B, w^{1-p'}}-b_{B,w}\right| \right\} ^{p'}w(u)^{1-p'}du\right)^{1\over p'} \\
&\leq\left(\int_{2^{j+1}B} \left| b(u)-b_{2^{j+1}B, w^{1-p'}}\right| ^{p'}w(u)^{1-p'}du\right)^{1\over p'}
+\left|b_{2^{j+1}B, w^{1-p'}}-b_{B,w}\right| \left(\int_{2^{j+1}B}w(u)^{1-p'}du\right)^{1\over p'}\\
&=:V+VI.
\end{align*}

Since $w\in A_p(\cc)$, we have $w^{1-p'}\in A_{p'}(\cc)$, where $1/p+1/p'=1$. By Theorem 5.5 in \cite{Ho}, we can obtain that
$$V\lesssim\|b\|_{{\rm {BMO}}(\cc)}w^{1-p'}(2^{j+1}B)^{1\over p'}.$$

For $VI$,  by Theorem 5.5 in \cite{Ho}, we have
\begin{align*}
\left|b_{2^{j+1}B, w^{1-p'}}-b_{B,w}\right|&\leq \left|b_{2^{j+1}B, w^{1-p'}}-b_{2^{j+1}B}\right|+\left|b_{2^{j+1}B}-b_{B}\right|
+\left|b_{B}-b_{B,w}\right|\\
&\leq{1\over w^{1-p'}(2^{j+1}B)}\int_{2^{j+1}B}\left|b(u)-b_{2^{j+1}B}\right|w(u)^{1-p'}du\\
& +2^Q(j+1)\|b\|_{{\rm {BMO}}(\cc)}
+{1\over w(B)}\int_{B}\left|b(u)-b_{B}\right|w(u)du.
\end{align*}

Since $b\in {\rm {BMO}}(\cc)$, by the John-Nirenberg inequality (c.f. {\cite[Proposition 6]{Kro}}), there exist $\bar C_1>0$ and $\bar C_2>0$ such that for any ball $B$ and 
$\alpha>0$,
$$\left| \left\{g\in B: |b(g)-b_B|>\alpha \right\}\right|\leq \bar C_1|B|e^{-{\bar C_2\alpha\over\|b\|_{{\rm {BMO}}(\cc)}}}.$$
Then by Lemma \ref {lemlw}, we have
$$w\left( \left\{g\in B: |b(g)-b_B|>\alpha \right\}\right)\leq \bar C_1w(B)e^{-{\bar C_2\alpha\sigma\over\|b\|_{{\rm {BMO}}(\cc)}}}, $$
for some $\sigma\in (0,1)$. Therefore,
\begin{align*}
\int_B |b(u)-b_B|w(u)du&=\int_{0}^{\infty}w\left(\{g\in B: |b(g)-b_B|>\alpha \} \right)d\alpha\\
&\lesssim w(B)\int_{0}^{\infty}e^{-{\bar C_2\alpha\sigma\over\|b\|_{{\rm {BMO}}(\cc)}}} d\alpha\\
&\lesssim w(B)\|b\|_{{\rm {BMO}}(\cc)}.
\end{align*}
Similarly, we have
$$\bigg(\int_{2^{j+1}B}\left|b(u)-b_{2^{j+1}B}\right|w(u)^{1-p'}du\bigg)^{1\over p'}\lesssim (j+1)\|b\|_{{\rm {BMO}}(\cc)}w^{1-p'}(2^{j+1}B)^{1/p'}.$$
Together with Lemma \ref {lemlw}, we have 
\begin{align*}
IV&\lesssim \|f\|^p_{L_w^{p,\kappa}(\cc)}\|b\|^p_{{\rm {BMO}}(\cc)}\bigg[\sum_{j=1}^{\infty}{w(2^{j+1}B)^{\kappa\over p}\over |2^jB|}(j+1)w^{1-p'}(2^{j+1}B)^{1/p'}
\bigg]^p
 w(B)^{1-\kappa}\\
 &\lesssim \|f\|^p_{L_w^{p,\kappa}(\cc)}\|b\|^p_{{\rm {BMO}}(\cc)}\bigg[\sum_{j=1}^{\infty}{(j+1)w(B)^{1-k\over p}\over w(2^{j+1}B)^{1-\kappa\over p}}
 \bigg]^p\\
 &\lesssim \|f\|^p_{L_w^{p,\kappa}(\cc)}\|b\|^p_{{\rm {BMO}}(\cc)}\bigg[\sum_{j=1}^{\infty}(j+1) 2^{-(j+1)(1-\kappa)Q\sigma\over p}
 \bigg]^p\\
&\lesssim \|f\|^p_{L_w^{p,\kappa}(\cc)}\|b\|^p_{{\rm {BMO}}(\cc)} .
\end{align*}

As a consequence, we have
\begin{align*}
\|[b,\mathcal C]f_2\|_{L_w^{p,\kappa}(\cc)}\lesssim\|f\|_{L_w^{p,\kappa}(\cc)}\|b\|_{{\rm {BMO}}(\cc)} .
\end{align*}
This completes the proof.

  (ii). To prove that $b\in{\rm{BMO}}(\cc)$, it suffices to show that,
  for any ball $B\subset\cc$, $M(b;B)\lesssim 1.$
  Let $B=B(g_0,r)$ be a ball in $\cc$. Let  $\widetilde B:=B(h_0, r)\subset \mathscr H^{n-1}$ be the ball in Lemma \ref{lb}.
 Let
  $$E_1:=\lf\{g\in B:\ b(g)\geq\alpha_{\widetilde{B}}(b)\r\}\quad\mathrm{and}
  \quad E_2:=\lf\{g\in B:\ b(g)<\alpha_{\widetilde{B}}(b)\r\};$$
 {$$F_1=\lf\{u\in\widetilde{B}:\ b(u)\leq\alpha_{\widetilde{B}}(b)\r\}\quad\mathrm{and}
  \quad F_2=\lf\{u\in\widetilde{B}:\ b(u)\geq\alpha_{\widetilde{B}}(b)\r\}.$$
  From \eqref {median value-1} and \eqref {median value-2} we can see
  $|F_1|\geq{1\over 2}|\tilde B|={1\over 2}|B|$ and $|F_2|\geq{1\over 2}|\tilde B|={1\over 2}|B|$.}
  For any $(g,u)\in E_j\times F_j,\,j\in\{1,2\}$,
  $$|b(g)-b(u)|=\lf|b(g)-\alpha_{\widetilde{B}}(b)\r|+\lf|\alpha_{\widetilde{B}}(b)-b(u)\r|
  \geq\lf|b(g)-\alpha_{\widetilde{B}}(b)\r|.$$

   Since $b$ is real-valued,
  from {Lemma \ref{lb}}, the H\"{o}lder inequality and the boundedness of $[b,\mathcal{C}]$
  on $L^{p,\,\kappa}_w(\cc)$, we deduce that
  \begin{align*}
   M(b;B)&\lesssim\frac{1}{|B|}\int_B\lf|b(g)-\alpha_{\widetilde{B}}(b)\r|\,dg
    \approx\sum_{j=1}^2\frac{1}{|B|}\int_{E_j}\lf|b(g)-\alpha_{\widetilde{B}}(b)\r|\,dg\\
    &\lesssim\sum_{j=1}^2\frac{1}{|B|}\int_{E_j}\int_{F_j}
    \frac{|b(g)-\alpha_{\widetilde{B}}(b)|}{|B|}\,du\,dg
    \approx\sum_{j=1}^2\frac{1}{|B|}\int_{E_j}\int_{F_j}
    \frac{|b(g)-\alpha_{\widetilde{B}}(b)|}{\rho(g,u)^{Q}}\,du\,dg\\
    &\lesssim\sum_{j=1}^2\frac{1}{|B|}\int_{E_j}\int_{F_j}
    |b(g)-b(u)|\frac{1}{{\rho(g,u)^{Q}}}\,du\,dg\\
    &\lesssim\sum_{j=1}^2\frac{1}{|B|}\int_{E_j}\lf|\int_{F_j}
    [b(g)-b(u)]K_i(g,u)\,du\r|\,dg
    \sim\sum_{j=1}^2\frac{1}{|B|}\int_{E_j}\lf|[b,\mathcal{C}]\chi_{F_j}(g)\r|\,dg\\
    &\lesssim\sum_{j=1}^2\frac{1}{|B|}\int_{B}\lf|[b,\mathcal{C}]\chi_{F_j}(g)\r|\,dg
    \lesssim\sum_{j=1}^2\frac{1}{|B|}\lf\|[b,\mathcal C]\chi_{F_j}\r\|_{L_w^{p,\,\kappa}(\cc)}
    [w(B)]^{\frac{\kappa-1}{p}}|B|\\
    &\lesssim\sum_{j=1}^2\lf\|[b,\mathcal C]\r\|_{L_w^{p,\,\kappa}(\cc)\to
    L_w^{p,\,\kappa}(\cc)}\lf\|\chi_{F_j}\r\|_{L_w^{p,\,\kappa}(\cc)}
    [w(B)]^{\frac{\kappa-1}{p}}\\
    &\lesssim\|[b,\mathcal C]\|_{L_w^{p,\,\kappa}(\cc)\to
    L_w^{p,\,\kappa}(\cc)}\lf[w\lf(\widetilde{B}\r)\r]^{\frac{1-\kappa}{p}}
    [w(B)]^{\frac{\kappa-1}{p}}\\
    &\lesssim\|[b,\mathcal C]\|_{L_w^{p,\,\kappa}(\cc)\to L_w^{p,\,\kappa}(\cc)}.
  \end{align*}
  This finishes the proof of Theorem \ref{CZ com bounded}.
\end{proof}

\section{Compactness characterization of Cauchy--Szeg\"o  commutators}\label{s3}

In this section, we will give the proof of Theorem \ref{CZ com compact}.

\subsection{Proof of Theorem \ref{CZ com compact}(i)}\label{s3.1}
We first  give a sufficient condition for  subsets of weighted Morrey spaces
to be relatively compact. Recall that
a subset $\cf$ of $L_w^{p,\,\kappa}(\cc)$ is said to be \emph{totally bounded}
(or \emph{relatively compact}) if the $L_w^{p,\,\kappa}(\cc)$-closure of
$\cf$ is compact.

\begin{lem}\label{l-fre kol}
For any $p\in(1,\infty)$, $\kappa\in(0,1)$ and $w\in A_p(\cc)$, a subset $\cf$ of
$L_w^{p,\,\kappa}(\cc)$ is totally bounded (or relatively compact) if
the set $\cf$ satisfies the following three conditions:
\begin{itemize}
\item[{\rm(i)}] $\cf$ is bounded, namely, $$\sup_{f\in\cf}\|f\|_{L_w^{p,\,\kappa}(\cc)}<\infty;$$
\item[{\rm(ii)}] $\cf$ uniformly vanishes at infinity, namely, for any $\epsilon\in(0,\infty)$,
there exists some positive constant $M$ such that, for any $f\in\cf$,
$$\lf\|f\chi_{\{g\in\cc:\ \|g\|>M\}}\r\|_{L_w^{p,\,\kappa}(\cc)}<\epsilon;$$

\item[{\rm(iii)}] $\cf$ is uniformly equicontinuous, namely, for any $\epsilon\in(0,\infty)$,
there exists some positive constant $\beta$ such that,
for any $f\in\cf$ and $\xi\in \cc$ with $\|\xi\|\in[0,\beta)$,
$$\|f(\cdot \xi)-f(\cdot)\|_{L_w^{p,\,\kappa}(\cc)}<\epsilon.$$
\end{itemize}
\end{lem}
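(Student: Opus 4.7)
The plan is to adapt the classical Fr\'echet--Kolmogorov compactness criterion to the weighted Morrey scale on $\mathscr H^{n-1}$. Given $\epsilon>0$, I would construct a finite $\epsilon$-net for $\mathcal F$ by three successive steps---truncation, regularization, and discretization---and combine them via the triangle inequality in $L_w^{p,\kappa}(\cc)$. First, by (ii), fix $M>0$ large enough that $\|f\chi_{\{\|g\|>M\}}\|_{L_w^{p,\kappa}}<\epsilon/3$ uniformly in $\mathcal F$, so it suffices to net the truncated family $\mathcal F_M:=\{f\chi_{B({\bf 0},M)}:f\in\mathcal F\}$ to precision $\epsilon/3$.

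For the regularization step, introduce the left-translation ball averages
\begin{align*}
A_rf(g):=\frac{1}{|B({\bf 0},r)|}\int_{B({\bf 0},r)} f(g\xi)\,d\xi,\qquad r\in(0,1).
\end{align*}
Since $L_w^{p,\kappa}(\cc)$ is a Banach function space, the generalized Minkowski inequality yields
\begin{align*}
\|A_rf-f\|_{L_w^{p,\kappa}}\le \frac{1}{|B({\bf 0},r)|}\int_{B({\bf 0},r)}\|\tau_\xi f-f\|_{L_w^{p,\kappa}}\,d\xi,
\end{align*}
where $\tau_\xi f(\cdot):=f(\cdot\,\xi)$. By hypothesis (iii) the right-hand side tends to $0$ as $r\to 0^+$, uniformly in $f\in\mathcal F$, so we may fix $r$ so that $\sup_{f\in\mathcal F}\|A_rf-f\|_{L_w^{p,\kappa}}<\epsilon/3$.

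Next, set $\mathcal G:=\{A_r f\cdot\chi_{B({\bf 0},M+r)}:f\in\mathcal F\}$, regarded as a family in $C(\overline{B({\bf 0},M+r)})$. Uniform boundedness of $\mathcal G$ in sup-norm follows from H\"older's inequality together with $w^{1-p'}\in A_{p'}(\cc)$ and (i); uniform equicontinuity follows from the estimate
\begin{align*}
|A_rf(g_1)-A_rf(g_2)|\le \frac{1}{|B({\bf 0},r)|}\int_{B(g_1,r)\triangle B(g_2,r)}|f(h)|\,dh
\end{align*}
combined with absolute continuity of the Lebesgue integral and a second H\"older bound against $w^{1-p'}$ on the fixed set $B({\bf 0},M+2r)$. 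By Arzel\`a--Ascoli, $\mathcal G$ admits a finite sup-norm $\delta$-net. Since for any function $h$ supported in $B({\bf 0},M+r)$ one has $\|h\|_{L_w^{p,\kappa}}\le C(M,r,w,\kappa,p)\|h\|_{L^\infty}$ (by the elementary observation that $\sup_{B'}w(B'\cap B({\bf 0},M+r))^{1/p}/w(B')^{\kappa/p}<\infty$, using $\kappa\in(0,1)$ together with the doubling property of $w$), this sup-norm net transfers to a finite $(\epsilon/3)$-net for $\mathcal G$ in $L_w^{p,\kappa}$.

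Assembling the three steps via the triangle inequality produces a finite $\epsilon$-net for $\mathcal F$, establishing total boundedness. The main technical subtlety lies in Step 3: the usual $L^p$ Fr\'echet--Kolmogorov argument must be redone on the Morrey side, where the defining norm is a supremum over \emph{all} balls in $\cc$. This is handled by the weight-ratio estimate above, which separates the small-ball regime (controlled by $1-\kappa>0$ via $w(B')^{(1-\kappa)/p}\lesssim w(B({\bf 0},M+r))^{(1-\kappa)/p}$) from the large-ball regime (controlled by $\kappa>0$ via $w(B')^{-\kappa/p}\to 0$ as $w(B')\to\infty$). Once this separation is in place, the remaining arguments are direct transcriptions of the classical Euclidean Fr\'echet--Kolmogorov proof to the quaternionic Heisenberg setting.
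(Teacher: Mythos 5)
Your argument is correct, but it is genuinely more than what the paper does: the paper gives no proof of this lemma at all, merely asserting that it follows from \cite[Theorem 1.5]{MaoSunWu16ActaMathSin} ``by a minor modification'' since only the metric and the doubling measure are used, whereas you supply a self-contained Riesz--Kolmogorov-type argument (truncation via (ii), ball-average mollification $A_r$ controlled through (iii) and the Minkowski integral inequality applied ball-by-ball in the Morrey norm, Arzel\`a--Ascoli on $\overline{B({\bf 0},M+r)}$, and a support-to-Morrey transfer estimate). This is essentially the scheme of the cited Euclidean proof, so the mathematical content matches what the authors outsource; your version has the merit of making explicit exactly where the group structure enters, namely that $A_rf(g)$ is the average of $f$ over the left-invariant ball $B(g,r)$ because $\rho(h,g)=\|g^{-1}h\|$ and Haar measure is translation invariant. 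Two places deserve slightly more care than you give them. First, the assembly: you announce that it suffices to net $\mathcal F_M=\{f\chi_{B({\bf 0},M)}\}$, but what you actually net is $\mathcal G=\{A_rf\,\chi_{B({\bf 0},M+r)}\}$; the correct gluing is $f=(f-A_rf)\chi_{B({\bf 0},M+r)}+f\chi_{\cc\setminus B({\bf 0},M+r)}+A_rf\,\chi_{B({\bf 0},M+r)}$, where the first term is handled by (iii)+Minkowski and the second by (ii) since $\cc\setminus B({\bf 0},M+r)\subset\{\|g\|>M\}$ --- this works, but should be written, as the naive ``net the truncated family'' phrasing would leave the tail of $A_rf$ unaccounted for. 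Second, in the transfer estimate the large-ball regime is not a limit statement ``$w(B')^{-\kappa/p}\to0$''; what you need (and what is true, by Lemma \ref{lemlw}) is the uniform lower bound $w(B')\gtrsim w(B({\bf 0},M+r))$ for every ball $B'$ of radius $\ge M+r$ meeting $B({\bf 0},M+r)$, which together with $w(B'\cap B({\bf 0},M+r))\le w(B({\bf 0},M+r))$ gives the finite supremum; also the bounded set containing the symmetric differences should be a $C_\rho$-dilate such as $B({\bf 0},C_\rho(M+2r))$ rather than $B({\bf 0},M+2r)$, because $\rho$ is only a quasi-metric. With these small repairs your proof is complete and could stand in place of the paper's citation.
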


The proof of this lemma follows from \cite[Theorem 1.5]{MaoSunWu16ActaMathSin} by a minor modification from Euclidean setting 
to quaternionic Heisenberg group, since it only requires the following key elements of the underlying space: metric and doubling measure.

Before we give the proof of Theorem \ref{CZ com compact},
we first need to establish the boundedness of the maximal operator $\mathcal C_\ast$
of a family of smooth truncated Cauchy--Szeg\"o transforms
$\{\mathcal C_\eta\}_{\eta\in(0,\infty)}$ as follows.
For $\eta\in(0,\infty)$, let
$$\mathcal C_\eta f(g):=\int_{\cc} K_{ \eta}(g, u)f(u)\,du,$$
where the kernel
$K_{ \eta}(g,u):=K(g,u)\varphi(\frac{\rho(g,u)}\eta)$ with
 $\varphi\in C_c^{\infty}(\rr)$ satisfying that
 \begin{align*}
 \phi_\varepsilon(g)=
 \begin{cases}
\varphi(t)\equiv 0\quad&{\rm if}\ \  t\in(-\fz, \frac12),\\
 \varphi(t)\in[0,1],  \quad &{\rm if}\ \ t\in[\frac12, 1],\\
 \varphi(t)\equiv 1, \quad &{\rm if}\ \ t\in (1, \infty).
 \end{cases}
\end{align*}
Let $$[b, \mathcal C_\eta]f(g):=\int_{\cc} [b(g)-b(u)]K_{ \eta}(g, u)f(u)\,du.$$

The\emph{ maximal operator $\mathcal C_{\ast}$} is
defined by setting, for any suitable function $f$ and $g\in\cc$,
$$\mathcal C_{\ast} f(g):=\sup_{\eta\in(0,\infty)}
\lf|\int_{\cc}K_{ \eta}(g, u)f(u)\,du\r|.$$
Recall that
the \emph{Hardy-Littlewood maximal operator $\cm$} is defined by
$$\cm f(g):=\sup_{B\ni g}\frac{1}{|B|}\int_B|f(u)|\,du,$$
for any $f\in L^1_{\loc}(\cc)$ and $g\in\cc$, where the supremum is taken over all the balls $B$ of $\cc$ that contain $g$.

We now recall the mean value theorem on homogeneous groups (c.f. \cite[Theorem 1.41]{FS}), which covers the quaternionic Heisenberg group.

\begin{lem} \label{mean-value}
There exist $C>0$ and $\gamma>0$ such that for any $f\in C^1(\cc)$ and $g, u\in\cc$,
$$\left|f(g\cdot u)-f(g) \right|\leq C\|u\|\sup_{\|\xi\|\leq \gamma\|u\|,1\leq j\leq 4n-4}|Y_jf(g\cdot\xi)|.$$
\end{lem}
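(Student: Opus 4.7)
\textbf{Proof proposal for Lemma \ref{mean-value}.} The plan is to join $g$ and $g\cdot u$ by a horizontal Lipschitz curve $\Gamma:[0,1]\to\cc$ whose length (measured by the horizontal tangents) is comparable to $\|u\|$ and whose trace lies in $B(g,\gamma\|u\|)$, then apply the fundamental theorem of calculus along $\Gamma$.

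First, I would use homogeneity to reduce to $\|u\|=1$. The dilations $\delta_r(t,y)=(r^2t,ry)$ satisfy $\|\delta_r u\|=r\|u\|$, and the horizontal fields are homogeneous of degree one: setting $\widetilde f:=f\circ L_g\circ\delta_r$ with $L_g$ denoting left translation, one has $Y_j\widetilde f=r\,(Y_jf)\circ L_g\circ\delta_r$. Applying the unit-scale estimate to $\widetilde f$ and $\widetilde u:=\delta_{\|u\|^{-1}}u$ and then unwinding the scaling yields the general bound with the factor $\|u\|$ and the rescaled supremum over $\|\xi\|\le\gamma\|u\|$.

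Next, I would construct $\Gamma$ explicitly in the unit-scale case. Writing $u=(t,y)\in\operatorname{Im}\mathbb H\times\mathbb H^{n-1}$, the normalization $\|u\|=1$ forces $|y|\lesssim 1$ and $|t|^{1/2}\lesssim 1$. The segment $s\mapsto g\cdot(0,sy)$, $s\in[0,1]$, is horizontal with tangent $\sum_j y_jY_j$, length $\approx|y|$, and realizes the $y$-component (modulo a controllable vertical drift absorbed into $t$). For the remaining vertical displacement in $\operatorname{Im}\mathbb H$, I would exploit $[Y_k,Y_j]=2\sum_\alpha b^\alpha_{kj}\partial_{t_\alpha}$: a commutator square $\exp(\tau Y_k)\exp(\tau Y_j)\exp(-\tau Y_k)\exp(-\tau Y_j)$ of side $\tau\approx|t|^{1/2}$ produces a displacement of order $\tau^2$ in the selected $t_\alpha$-direction while having total length $\lesssim\tau\lesssim\|u\|$ and staying inside a ball of comparable radius. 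The non-degeneracy of the matrices $b^1,b^2,b^3$ (whose brackets span $\operatorname{Im}\mathbb H$) lets one concatenate the initial segment with at most three such loops, one per $\alpha\in\{1,2,3\}$, to reach $g\cdot u$ along a horizontal curve $\Gamma$ of total length $\lesssim\|u\|$ whose trace lies in $B(g,\gamma\|u\|)$ for an absolute $\gamma>0$.

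Finally, writing $\dot\Gamma(s)=\sum_{j=1}^{4n-4}c_j(s)Y_j(\Gamma(s))$ with $\sum_j|c_j(s)|\lesssim\|u\|$ a.e., the chain rule gives
$$f(g\cdot u)-f(g)=\int_0^1\sum_{j=1}^{4n-4}c_j(s)(Y_jf)(\Gamma(s))\,ds,$$
and bounding the integrand pointwise by $\big(\sum_j|c_j(s)|\big)\sup_{\|\xi\|\le\gamma\|u\|,\,1\le j\le 4n-4}|Y_jf(g\cdot\xi)|$ yields the claim. The main obstacle is the quantitative commutator-loop construction: one must check that finitely many such loops, with coefficients chosen via the non-degeneracy of the $b^\alpha$, produce \emph{exactly} the required $\operatorname{Im}\mathbb H$-displacement subject to the stated length and diameter bounds. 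This is a quantitative Chow-type theorem on $\cc$, essentially the content underlying the metric equivalence \eqref{norm-equ}; the explicit stratified structure of $\mathscr H^{n-1}$ makes it elementary but slightly involved.
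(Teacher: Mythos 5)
Your proposal is correct, but it is worth noting that the paper does not prove this lemma at all: it simply quotes it as the mean value theorem on homogeneous groups, citing \cite[Theorem 1.41]{FS}. The Folland--Stein argument is abstract and works on any stratified group: one first shows (their Lemma 1.40) that every group element $u$ can be written as a finite product $u=u_1\cdots u_N$ of elements lying on horizontal one-parameter subgroups with $\|u_k\|\lesssim\|u\|$, and then telescopes $f(g\cdot u_1\cdots u_k)-f(g\cdot u_1\cdots u_{k-1})$, applying the ordinary one-variable mean value theorem along each horizontal subgroup. Your proof is a concrete, self-contained realization of the same idea, specialized to $\cc$: the dilation reduction is sound (the $Y_j$ are left-invariant and $\delta_r$-homogeneous of degree one), the segment $s\mapsto g\cdot(0,sy)$ is indeed horizontal with tangent $\sum_j y_jY_j$ (in fact there is no vertical drift at all, since $\operatorname{Im}\langle y,y\rangle=0$, so your ``absorbed into $t$'' caveat is unnecessary), and the remaining displacement $(t,0)$ is central, so the order of your loops is irrelevant. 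Moreover, the ``main obstacle'' you flag is not really an obstacle: since $\cc$ is step two, the Baker--Campbell--Hausdorff formula terminates and the group commutator $\exp(\tau Y_k)\exp(\tau Y_j)\exp(-\tau Y_k)\exp(-\tau Y_j)$ equals $\exp(\tau^2[Y_k,Y_j])$ \emph{exactly}; choosing the pairs $(Y_1,Y_2)$, $(Y_1,Y_3)$, $(Y_1,Y_4)$ gives $[Y_1,Y_2]=2\partial_{t_1}$, $[Y_1,Y_3]=2\partial_{t_2}$, $[Y_1,Y_4]=2\partial_{t_3}$, so three loops with $\tau_\alpha\approx|t_\alpha|^{1/2}\le\|u\|$ (and sign handled by reversing a loop) produce exactly $(t,0)$, with length and diameter bounds controlled via the quasi-triangle inequality. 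What your route buys is an elementary, quantitative Chow-type construction explicit for the quaternionic Heisenberg group; what the citation to \cite{FS} buys is generality (any homogeneous group) and brevity. Either is acceptable; if you keep your version, replace the hedged final paragraph by the exact BCH identity above.
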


Denote $\nabla_H=(Y_1,\cdots, Y_{4n-4})$. 
We have the following conclusions.
\begin{lem}\label{l-approx commuta}
There exists a positive constant $C$ such that,
for any $b\in C^\fz_c(\cc)$, $\eta\in(0,\infty)$, $f\in L^1_{\loc}(\cc)$ and $g\in\cc$,
$$\lf|\lf[b, \mathcal C_\eta\r]f(g)-\lf[b, \mathcal C\r]f(g)\r|
\le C\eta
\lf\|\nabla_H b\r\|_{L^\infty(\cc)} \cm f(g).$$
\end{lem}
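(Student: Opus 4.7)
The plan is to write the difference as a single integral supported on a small ball around $g$, apply the mean value theorem of Lemma~\ref{mean-value} to control $|b(g)-b(u)|$ pointwise by $\rho(g,u)\|\nabla_H b\|_{L^\infty(\cc)}$, insert the standard size bound on the Cauchy--Szeg\"o kernel from Theorem B(i), and then absorb the resulting truncated integral into $\mathcal{M}f(g)$ via a dyadic decomposition.

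First, I would observe that
\[
[b,\mathcal C]f(g)-[b,\mathcal C_\eta]f(g)
=\int_{\cc}[b(g)-b(u)]\,K(g,u)\Bigl[1-\varphi\bigl(\tfrac{\rho(g,u)}{\eta}\bigr)\Bigr]f(u)\,du.
\]
By the properties of $\varphi$, the factor $1-\varphi(\rho(g,u)/\eta)$ vanishes whenever $\rho(g,u)>\eta$ and is bounded by $1$ everywhere, so the integration is effectively restricted to the ball $B(g,\eta)$.

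Next, for each $u$ in this ball write $u=g\cdot(g^{-1}\cdot u)$ and apply Lemma \ref{mean-value} to $b\in C^\infty_c(\cc)$; since $\|g^{-1}\cdot u\|=\rho(g,u)$, this yields
\[
|b(g)-b(u)|\le C\,\rho(g,u)\,\|\nabla_H b\|_{L^\infty(\cc)}.
\]
Combined with the size bound $|K(g,u)|\lesssim \rho(g,u)^{-Q}$ from Theorem B(i), the integrand is dominated by $C\|\nabla_H b\|_{L^\infty(\cc)}\rho(g,u)^{1-Q}|f(u)|$, and so
\[
\bigl|[b,\mathcal C]f(g)-[b,\mathcal C_\eta]f(g)\bigr|
\lesssim \|\nabla_H b\|_{L^\infty(\cc)}\int_{B(g,\eta)}\frac{|f(u)|}{\rho(g,u)^{Q-1}}\,du.
\]

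Finally, I would split the ball $B(g,\eta)$ dyadically into annuli $A_j:=\{u:2^{-j-1}\eta<\rho(g,u)\le 2^{-j}\eta\}$ for $j\ge 0$. On $A_j$ the factor $\rho(g,u)^{1-Q}$ is comparable to $(2^{-j}\eta)^{1-Q}$, and since $|B(g,2^{-j}\eta)|\approx(2^{-j}\eta)^Q$ we obtain
\[
\int_{A_j}\frac{|f(u)|}{\rho(g,u)^{Q-1}}\,du
\lesssim 2^{-j}\eta\cdot\frac{1}{|B(g,2^{-j}\eta)|}\int_{B(g,2^{-j}\eta)}|f(u)|\,du
\lesssim 2^{-j}\eta\,\mathcal M f(g).
\]
Summing the geometric series $\sum_{j\ge 0}2^{-j}$ produces the claimed bound $C\eta\|\nabla_H b\|_{L^\infty(\cc)}\mathcal M f(g)$. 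The only mildly delicate step is the pointwise mean value estimate: one must invoke Lemma \ref{mean-value} with the correct reference point $g$ so that the implicit supremum of $|\nabla_H b|$ is taken on a set where it is controlled by $\|\nabla_H b\|_{L^\infty(\cc)}$; everything else is a routine kernel-plus-maximal-function calculation.
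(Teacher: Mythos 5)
Your proposal is correct and follows essentially the same route as the paper's proof: reduce the difference to an absolutely convergent integral over $\rho(g,u)\le\eta$ using the support properties of $\varphi$, bound $|b(g)-b(u)|\lesssim\rho(g,u)\|\nabla_H b\|_{L^\infty(\cc)}$ via Lemma \ref{mean-value}, invoke the size estimate of Theorem B, and sum over dyadic annuli to obtain $\eta\,\cm f(g)$. No substantive differences from the paper's argument.
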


\begin{proof}
Let $f\in L^1_{\loc}(\cc)$. For any $g\in\cc$, we have
\begin{align*}
&\lf|\lf[b, \mathcal C_\eta\r]f(g)-\lf[b, \mathcal C\r]f(g)\r|\\
&\quad=\lf|\int_{\eta/2<\rho(g,u)\leq\eta} [b(g)-b(u)]K(g,u)f(u)\,du
    -\int_{\rho(g,u)\leq\eta} [b(g)-b(u)]K(g,u)f(u)\,du\r|\\
&\quad\ls\int_{\rho(g,u)\leq\eta} |b(g)-b(u)|\lf|K(g,u)\r||f(u)|\,du.
\end{align*}
From the smoothness of $b$, Lemma \ref{mean-value} and Theorem B, we deduce that
\begin{align*}
&\int_{\rho(g,u)\leq\eta} |b(g)-b(u)|\lf|K(g, u)\r||f(u)|\,du\\
&\ls \lf\|\nabla_H b\r\|_{L^\infty(\cc)}\sum_{j=0}^\infty
\int_{\frac\eta{2^{j+1}}<\rho(g,u)\leq\frac\eta{2^j}} \frac{\rho(g,u)}{\rho(g,u)^{Q}}|f(u)|\,du\\
&\ls \eta\lf\|\nabla_H b\r\|_{L^\infty(\cc)}\cm f(g),
\end{align*}
which completes the proof of Lemma \ref{l-approx commuta}.
\end{proof}

\begin{lem}\label{l-sub opr bdd}
Let $p\in(1,\infty)$, $\kappa\in(0,1)$ and $w\in A_p(\cc)$.
Then there exists a positive constant $C$ such that,
for any $f\in L_w^{p,\,\kappa}(\cc)$,
$$\lf\|\mathcal C_{\ast} f\r\|_{L_w^{p,\,\kappa}(\cc)}
+\|\cm f\|_{L_w^{p,\,\kappa}(\cc)}\leq C\|f\|_{L_w^{p,\,\kappa}(\cc)}.$$
\end{lem}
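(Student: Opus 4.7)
The plan is to treat $\mathcal{M}$ and $\mathcal{C}_*$ separately, then reduce $\mathcal{C}_*$ to $\mathcal{M}$ and $\mathcal{C}$ via a pointwise Cotlar-type inequality. Both ingredients are standard in the Euclidean theory; the task is really to check that they transplant to the quaternionic Heisenberg setting, using only doubling, the $A_p$ theory on $\cc$, and the Calder\'on--Zygmund properties of $K$ recorded in Theorem B.

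For the Hardy--Littlewood maximal operator on $L_w^{p,\kappa}(\cc)$, I would follow the Komori--Shirai scheme. Fix a ball $B=B(g_0,r)$ and split $f=f\chi_{2B}+f\chi_{(2B)^c}=:f_1+f_2$. The local piece is controlled via the classical weighted $L^p_w$-boundedness of $\mathcal{M}$ (which holds in $(\cc,\rho,dg)$ because $w\in A_p(\cc)$), together with $\|f_1\|_{L^p_w(\cc)}\leq [w(2B)]^{\kappa/p}\|f\|_{L_w^{p,\kappa}(\cc)}$ and Lemma \ref{lemlw}. For $g\in B$, an annular decomposition gives
$$\mathcal{M}f_2(g)\lesssim \sum_{j\geq 1}\frac{1}{|2^{j+1}B|}\int_{2^{j+1}B}|f(u)|\,du,$$
and H\"{o}lder's inequality against $w$ and $w^{-1/(p-1)}$, combined with the $A_p$ condition, bounds each term by $w(2^{j+1}B)^{(\kappa-1)/p}\|f\|_{L_w^{p,\kappa}(\cc)}$; Lemma \ref{lemlw} then makes the sum converge geometrically.

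For $\mathcal{C}_*$ the main step is the Cotlar-type pointwise bound
$$\mathcal{C}_*f(g)\lesssim \mathcal{M}(\mathcal{C}f)(g)+\mathcal{M}f(g),\qquad g\in\cc.$$
Fix $\eta>0$ and $g$, set $B=B(g,c\eta)$ for a suitable constant $c$, and decompose $f=f_1+f_2$ with $f_1=f\chi_B$. Theorem B(iii) yields the smoothness estimate $|\mathcal{C}_\eta f_2(g)-\mathcal{C}_\eta f_2(u)|\lesssim \mathcal{M}f(g)$ for $u$ in a small ball around $g$, and the smooth cut-off $\varphi$ ensures $|\mathcal{C}f_2(u)-\mathcal{C}_\eta f_2(u)|\lesssim \mathcal{M}f(g)$ (this is essentially the computation already carried out for Lemma \ref{l-approx commuta}, but without the factor $b$). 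Writing $\mathcal{C}f_2(u)=\mathcal{C}f(u)-\mathcal{C}f_1(u)$ and averaging $u$ over a small ball about $g$, one controls $\mathcal{C}f(u)$ by $\mathcal{M}(\mathcal{C}f)(g)$ on a set of $u$ of positive proportion, and one controls $\mathcal{C}f_1$ via Kolmogorov's inequality together with the weak-type $(1,1)$ boundedness of $\mathcal{C}$ (which holds because $K$ is a standard Calder\'on--Zygmund kernel on the space of homogeneous type $(\cc,\rho,dg)$, by Theorem B).

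With the Cotlar bound in hand, boundedness of $\mathcal{C}_*$ on $L_w^{p,\kappa}(\cc)$ reduces to boundedness of $\mathcal{M}$ (Step 1) and of $\mathcal{C}$ itself on $L_w^{p,\kappa}(\cc)$. The latter is the Komori--Shirai theorem (cited in the introduction) applied to the Calder\'on--Zygmund operator $\mathcal{C}$, whose weighted $L^p$ boundedness is Lemma \ref{wlp} and whose kernel estimates come from Theorem B. The principal obstacle I foresee is running Cotlar's argument carefully with the \emph{smooth} truncation $\varphi$ in the non-Abelian stratified setting of $\mathscr{H}^{n-1}$; the smoothness only helps (removing boundary ambiguities), and all the ingredients required to push the classical proof through, namely the kernel size and smoothness from Theorem B and the weak-type $(1,1)$ of $\mathcal{C}$, are already available.
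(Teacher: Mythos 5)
Your proposal is correct in outline, but it takes a genuinely different route from the paper's. The paper disposes of $\cm$ by citing Arai--Mizuhara \cite{AraiMizuhara97MathNachr}, and it bounds $\mathcal C_\ast$ directly on the Morrey scale: for each ball $B$ it splits $f=f\chi_{2B}+f\chi_{\cc\setminus 2B}$, controls the local piece by the weighted $L^p_w$-boundedness of the maximal truncated singular integral quoted from \cite[Theorem 1.1]{HSZ}, and controls the far piece using only the size bound $|K(g,u)|\lesssim\rho(g,u)^{-Q}$, annuli, H\"older's inequality and Lemma \ref{lemlw} --- exactly the computation you run for $\cm f_2$. You instead prove the $\cm$ bound by hand (fine; it is the same split, and the paper uses this very computation elsewhere) and reduce $\mathcal C_\ast$ to $\cm$ through a Cotlar-type inequality $\mathcal C_\ast f\lesssim \cm(\mathcal C f)+\cm f$, which then requires the Morrey boundedness of $\mathcal C$ itself. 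This works, but it is the longer road: you must (i) give a meaning to $\mathcal C f$ for a general $f\in L_w^{p,\,\kappa}(\cc)$ (local piece in $L^p_w$ plus an absolutely convergent tail integral) and prove its boundedness on $L_w^{p,\,\kappa}(\cc)$ --- Komori--Shirai is stated on $\rr^n$, so you need its homogeneous-type analogue (e.g.\ via \cite{KM}) or a direct argument by the same ball-wise split; and (ii) establish the weak-$(1,1)$ bound for $\mathcal C$ and run the Cotlar/Kolmogorov argument with the quasi-metric and the smooth cutoff, whereas the paper's route needs neither, since $\mathcal C_\eta f$ is an absolutely convergent integral for Morrey $f$ and the local piece is handled wholesale by the quoted weighted bound for $\mathcal C_\ast$ on $L^p_w(\cc)$. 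What your route buys is independence from the weighted maximal-singular-integral citation, at the cost of more machinery. Two small slips to fix: the regularity needed to compare $\mathcal C f_2(g)$ with $\mathcal C f_2(u)$ is the first-variable estimate, Theorem B(ii), not (iii); and Lemma \ref{wlp} concerns the commutator $[b,T]$, not $\mathcal C$ itself --- the weighted $L^p_w$ bound for $\mathcal C$ comes from standard Calder\'on--Zygmund theory on spaces of homogeneous type (or again from \cite{HSZ}).
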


\begin{proof}
For the boundedness of $\cm$ on $L_w^{p,\,\kappa}(\cc)$ one can refer to
\cite{AraiMizuhara97MathNachr}. We only consider the boundedness of $\mathcal C_\ast$.
For any fixed ball $B\subset \cc$ and $f\in L_w^{p,\,\kappa} (\cc)$, we write
$$f:=f_1+f_2:=f\chi_{2B}+ f\chi_{\cc\setminus 2B}.$$
Observe   $f_1\in L_w^{p} (\cc)$.
Then, from the boundedness of $\mathcal C_\ast$ on $L_w^{p} (\cc)$ (see, for example,
\cite[Theorem 1.1]{HSZ}),  the H\"older inequality, Theorem B, we deduce that
\begin{align*}
&\lf[\int_B|\mathcal C_\ast f(g)|^pw(g)\,dg\r]^{\frac1p}\\
&\quad\ls \lf[\int_B|\mathcal C_\ast f_1(g)|^pw(g)\,dg\r]^{\frac1p}
  +\sum_{k=1}^\fz\lf\{\int_B\lf[\int_{2^{k+1}B\setminus 2^kB}
  \frac{|f(u)|}{ \rho(g,u)^Q}\,du\r]^pw(g)\,dg\r\}^{\frac1p}\\
&\quad\ls \lf[\int_{2B}|f(g)|^pw(g)\,dg\r]^{\frac1p}+
  \sum_{k=1}^\fz \lf[\frac{w(B)}{|2^kB|^p}\lf\{\int_{2^{k+1}B}|f(u)|[w(u)]^{\frac1p}
  [w(u)]^{-\frac1p}\,du\r\}^p\r]^{\frac1p}\\
&\quad\ls \|f\|_{L_w^{p,\,\kappa} (\cc)}[w(B)]^{\frac\kappa p}+\sum_{k=1}^\fz \lf\{w(B)\lf[w\lf(2^kB\r)\r]^{\kappa-1}\|f\|^p_{L_w^{p,\,\kappa} (\cc)}\r\}^{\frac1p}\\
&\quad\ls \|f\|_{L_w^{p,\,\kappa} (\cc)}w(B)^{\frac\kappa p}+\sum_{k=1}^\fz \lf\{[w(B)]^\kappa2^{2k\sigma(\kappa-1)}\|f\|^p_{L_w^{p,\,\kappa} (\cc)}\r\}^{\frac1p}\\
&\quad\ls \|f\|_{L_w^{p,\,\kappa} (\cc)}[w(B)]^{\frac\kappa p},
\end{align*}
where, in the fourth inequality, we used Lemma \ref{lemlw} with some $\sigma\in(0, 1)$.
This finishes the proof of Lemma \ref{l-sub opr bdd}.
\end{proof}

\begin{proof}[Proof of Theorem \ref{CZ com compact}(i)]
When $b\in \cmoc$, for any $\varepsilon\in(0,\infty)$,
there exists $b^{(\varepsilon)}\in C^\fz_c(\cc)$ such that
$\|b-b^{(\varepsilon)}\|_{\bmoc}<\varepsilon.$
Then, from the boundedness of the commutator $[b, \mathcal C]$ on $L_w^{p,\,\kappa} (\cc)$, we obtain
\begin{align*}
\lf\|\lf[b,\mathcal C\r]f-\lf[b^{(\varepsilon)},\mathcal C\r]f\r\|_{L_w^{p,\,\kappa} (\cc)}
&=\lf\|\lf[b-b^{(\varepsilon)},\mathcal C\r]f\r\|_{L_w^{p,\,\kappa} (\cc)}\\
&\lesssim \lf\|b-b^{(\varepsilon)}\r\|_{\bmoc}\|f\|_{L_w^{p,\,\kappa} (\cc)}\\
&<\varepsilon\|f\|_{L_w^{p,\,\kappa} (\cc)}.
\end{align*}
Moreover, by using Lemmas \ref{l-approx commuta} and \ref{l-sub opr bdd},
we get
$$\lim_{\eta\to0}\lf\|\lf[b, \mathcal C_\eta\r]-
  \lf[b, \mathcal C\r]\r\|_{L_w^{p,\,\kappa} (\cc)\to L_w^{p,\,\kappa} (\cc)}=0.$$
Now it suffices to show that, for any $b\in C^\fz_c(\cc)$ and $\eta\in(0,\infty)$
small enough, $[b,\,\mathcal C_\eta]$ is a compact operator on $L_w^{p,\,\kappa} (\cc)$,
which is equivalent to show that, for any bounded subset
$\cf\subset L_w^{p,\,\kappa} (\cc)$, $[b,\,\mathcal C_\eta]\cf$ is relatively compact. That is, we need to verify
$[b,\,\mathcal C_\eta]\cf$ satisfies the conditions (i) through (iii) of Lemma \ref{l-fre kol}.

We first point out that, by \cite[Theorem 3.4]{KomoriShirai09MathNachr}
and the fact that $b\in\bmoc$, we know that
$[b,\,\mathcal C_\eta]$ is bounded on $L_w^{p,\,\kappa} (\cc)$ for the given $p\in(1,\infty)$,
$\kappa\in(0,1)$ and $w\in A_p(\cc)$, which implies that
$[b,\,\mathcal C_\eta]\cf$ satisfies condition (i) of Lemma \ref{l-fre kol}.

Next, since $b\in C^\fz_c(\cc)$, we may further assume
$\|b\|_{L^{\infty}(\cc)}+\|\nabla_H b\|_{L^{\infty}(\cc)}=1$.
Observe that there exists a positive constant $R_0$ such that $\supp (b)\subset B(0,R_0)$.
Let $M\in(10R_0,\infty)$. Thus, for any $u\in B(0,R_0)$ and $g\in\cc$ with $\|g\|\in(M,\infty)$,
we have $\rho(g,u)\sim \|g\|$.
Then, for $g\in\cc$ with $\|g\|>M$, by Theorem B and the H\"{o}lder inequality,
we conclude that
\begin{align*}
\lf|\lf[b,\,\mathcal C_\eta\r]f(g)\r|&\le\int_{\cc}|b(g)-b(u)|\lf|K_{ \eta}(g, u)\r|
  |f(u)|\,du\\
&\ls\|b\|_{L^{\infty}(\cc)}\int_{B(0,\,R_0)}\frac{|f(u)|}{\rho(g,u)^Q}\,du\\
&\ls\frac{1}{\|g\|^Q}\|b\|_{L^{\infty}(\cc)}\lf[\int_{B(0,\,R_0)}|f(u)|^pw(u)\,du\r]^
 {\frac1p}\lf\{\int_{B(0,\,R_0)}[w(u)]^{-\frac{p'}p}\,du\r\}^{\frac1{p'}}\\
&\ls\frac{1}{\|g\|^Q}\|f\|_{L_w^{p,\,\kappa} (\cc)}
 \lf[w(B(0,\,R_0))\r]^{\frac{\kappa-1}p}|B(0,\,R_0)|.
\end{align*}
Therefore, for any fixed ball $B:=B(\wz g,\,\wz r)\subset \cc$, by Lemma \ref{lemlw}, we have
\begin{align*}
&\frac{1}{[w(B)]^\kappa}\int_{B\cap\{g\in\cc:\ \|g\|>M\}}
    \lf|\lf[b,\,\mathcal C_\eta\r]f(g)\r|^p w(g)\,dg\\
&\quad\ls\frac{\|f\|^p_{L_w^{p,\,\kappa} (\cc)}[w(B(0,R_0))]^{\kappa-1}
    |B(0,R_0)|^p}{[w(B)]^\kappa}\\
    &\quad\quad \times\sum_{j=0}^\infty
    \frac{w(B\cap\{g\in\cc:\ 2^jM<\|g\|\leq 2^{j+1}M\})}{|2^jM|^{Qp}}\\
&\quad\ls \|f\|^p_{L_w^{p,\,\kappa} (\cc)}[w(B(0,R_0))]^{\kappa-1}|B(0,R_0)|^p
   \sum_{j=0}^\infty\frac{[w(B(0, 2^jM))]^{1-\kappa}}{|2^jM|^{Qp}}\\
&\quad\ls\|f\|^p_{L_w^{p,\,\kappa} (\cc)}[w(B(0,R_0))]^{\kappa-1}|B(0,R_0)|^p
   \frac{[w(B(0, M))]^{1-\kappa}}{M^{Qp}}\sum_{j=0}^\infty\frac{2^{Qjp(1-\kappa)}}{2^{Qjp}}\\
&\quad   \ls\lf(\frac{R_0}{M}\r)^{kQp}\|f\|^p_{L_w^{p,\,\kappa} (\cc)}.
\end{align*}
Thus, we conclude that
$$\lf\|\lf([b,\mathcal C_\eta]f\r)\chi_{\{g\in\cc:\ \|g\|>M\}}\r\|_{L_w^{p,\,\kappa}(\cc)}
  \ls \lf(\frac{R_0}{M}\r)^{kQ}\|f\|_{L_w^{p,\,\kappa} (\cc)}.$$
Therefore, condition (ii) of Lemma \ref{l-fre kol} holds for
$[b,\mathcal C_\eta]\mathcal{F}$ with $M$ large enough.

It remains to prove that $[b,\mathcal C_\eta]\mathcal{F}$ also satisfies condition (iii) of
Lemma \ref{l-fre kol}.
Let $\eta$ be a fixed positive constant small enough and $\xi\in\cc$ with $\|\xi\|\in(0,\eta/8(1+C_\rho))$.
Then, for any $g\in\cc$, we have
\begin{align*}
&\lf[b, \mathcal C_\eta\r]f(g)-\lf[b, \mathcal C_\eta\r]f(g\cdot\xi)\\
&\quad=[b(g)-b(g\cdot\xi)]\int_{\cc}K_{ \eta}(g, u)f(u)\,du\\
&\quad\quad +\int_{\cc}\lf[K_{ \eta}(g, u)-
  K_{ \eta}(g\cdot\xi, u)\r][b(g\cdot\xi)-b(u)]f(u)\,du\\
&\quad=:L_1(g)+L_2(g).
\end{align*}

Since $b\in C^\fz_c(\cc)$, by Lemma \ref{mean-value}, it follows that, for any $g\in\cc$,
\begin{align*}
|L_1(g)|=|b(g)-b(g\cdot\xi)|\lf|\int_{\cc}K_{ \eta}(g, u)f(u)\,du\r|
 \ls \|\xi\| \lf\|\nabla_H b\r\|_{L^{\infty}(\cc)}\mathcal C_\ast(f)(g).
\end{align*}
Then Lemma \ref{l-sub opr bdd} implies
$\|L_1\|_{L_w^{p,\,\kappa} (\cc)}\ls\|\xi\| \|f\|_{L_w^{p,\,\kappa} (\cc)}$.

To estimate $ L_2(g)$, we first observe that
$K_{ \eta}(g,u)=0$, $K_{ \eta}(g\cdot\xi, u)=0$
for any $g$, $u$, $\xi\in\cc$ with $\rho(g,u)\in(0,\eta/ 4(1+C_\rho))$ and $\|\xi\|\in(0, \eta/8(1+C_\rho))$.
Moreover, by the definition of $K_{ \eta}(g,u)$ and Theorem B,
we know that, for any $g$, $u\in\cc$ with $\rho(g,u)\in[\eta/4(1+C_\rho),\infty)$,
$$\lf|K_{ \eta}(g,u)-K_{ \eta}(g\cdot\xi,u)\r|\ls \frac{\|\xi\|}{\rho(g,u)^{Q+1}}.$$
This in turn implies that, for any $g\in\cc$,
\begin{align*}
|L_2(g)|&\ls \|\xi\|\int_{\rho(g,u)>\eta/4(1+C_\rho)}\frac{|f(u)|}{\rho(g,u)^{Q+1}}\,\,du\\
&\ls\sum_{k=0}^\fz\frac{\|\xi\|}{(2^k\eta)^{Q+1}}\int_{2^k\eta/4(1+C_\rho)<\rho(g,u)\le2^{k+1}\eta/4(1+C_\rho)}|f(u)|\,du\\
&\ls\sum_{k=0}^\fz\frac{\|\xi\|}{2^k\eta}\frac{1}{(2^k\eta)^Q}\int_{B(g,\,2^{k+1}\eta/4(1+C_\rho))}|f(u)|\,du
\ls\frac{\|\xi\|}\eta \cm f(g).
\end{align*}
Then, by the boundedness of $\cm$ on $L_w^{p,\,\kappa} (\cc)$, we obtain
$$\|L_2\|_{L_w^{p,\,\kappa} (\cc)}\ls\frac{\|\xi\|}\eta\|f\|_{L_w^{p,\,\kappa} (\cc)}.$$
Consequently,
$[b,\,\mathcal C_\eta]\mathcal{F}$ satisfies condition (iii) of Lemma \ref{l-fre kol}.
Thus, $[b,\,\mathcal C_\eta]$ is a compact operator for any $b\in C^\fz_c(\cc)$.
This finishes the  proof of Theorem \ref{CZ com compact}(i).
\end{proof}

\subsection{Proof of Theorem \ref{CZ com compact}(ii)}\label{s3.2}

We begin with recalling an equivalent characterization of $\cmoc$ from
\cite[Theorem 4.4]{CDLW}.

\begin{lem}\label{l-cmo char}
Let $f\in\bmoc$.
Then $f\in\cmoc$ if and only if $f$ satisfies the following three conditions:
\begin{flalign*}\begin{split}
&{\rm (i)} \quad \quad\lim\limits_{a\rightarrow 0}\sup\limits_{r_B=a}M(f,B)=0;\\
&{\rm (ii)}  \quad\ \ \lim\limits_{a\rightarrow \infty}\sup\limits_{r_B=a}M(f,B)=0;\\
&{\rm (iii)}  \quad\ \lim\limits_{r\rightarrow \infty}\sup\limits_{B\subset \cc\setminus B(0,r)}M(f,B)=0,
\end{split}&
\end{flalign*}
where $r_B$ is the radius of the ball $B$.
\end{lem}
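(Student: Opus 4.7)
The plan is to prove both directions of the equivalence, modeling the argument on Sarason's classical characterization of VMO on $\mathbb{R}^n$, adapted to the homogeneous group structure of $\mathscr H^{n-1}$. The key elementary tool throughout is the inequality $|M(f,B)-M(g,B)|\le 2\|f-g\|_{\bmoc}$ for any ball $B$, which allows us to transfer oscillation bounds across BMO limits.

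For the forward direction, assume $f\in\cmoc$, so there exist $f_k\in C^\infty_c(\cc)$ with $\|f-f_k\|_{\bmoc}\to 0$. First I would check that every $\phi\in C^\infty_c(\cc)$ satisfies the three conditions: condition (i) follows from uniform continuity of $\phi$ on the group, since on a ball of small radius $r$ the mean oscillation is bounded by the modulus of continuity of $\phi$ at scale $r$; condition (ii) follows because on a ball of very large radius the $C^\infty_c$ function has its mass confined to a negligible portion of the ball so both $\phi_B$ and $M(\phi,B)$ tend to zero; condition (iii) follows from the compact support of $\phi$, as balls entirely outside $B(0,r)$ for large $r$ eventually miss $\supp\phi$. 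Applying the elementary inequality above, each of the three sup-limits for $f$ is bounded by a uniform constant times $\|f-f_k\|_{\bmoc}$ plus the corresponding sup-limits for $f_k$, which vanish.

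For the converse, assume $f\in\bmoc$ satisfies (i)--(iii). I would construct approximations $f_{\varepsilon,R}:=\eta_R\cdot(f*\phi_\varepsilon)$, where $\phi_\varepsilon$ is a smooth nonnegative left-invariant mollifier with mass one, obtained from a single bump by the dilation $\delta_\varepsilon$ on $\mathscr H^{n-1}$, and $\eta_R\in C^\infty_c(\cc)$ is a smooth cutoff equal to $1$ on $B(\mathbf 0,R)$ and $0$ off $B(\mathbf 0,2R)$. Then $f_{\varepsilon,R}\in C^\infty_c(\cc)$. The goal is to show $\|f-f_{\varepsilon,R}\|_{\bmoc}\to 0$ when $\varepsilon\to 0$ and $R\to\infty$ along an appropriate diagonal subsequence. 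I would split
\[
f-f_{\varepsilon,R}=\bigl(f-f*\phi_\varepsilon\bigr)+\bigl(1-\eta_R\bigr)\bigl(f*\phi_\varepsilon\bigr),
\]
estimating $M(\,\cdot\,,B)$ of each summand by a case analysis based on whether the ball $B$ is small, large, or centered far from $\mathbf 0$, and invoking (i), (ii), (iii) respectively to obtain smallness uniformly in $B$.

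The main obstacle is the uniform bookkeeping in the reverse direction. One must verify that the mollification step $f\mapsto f*\phi_\varepsilon$ is BMO-bounded and that $\|f-f*\phi_\varepsilon\|_{\bmoc}\to 0$ as $\varepsilon\to 0$ whenever $f$ satisfies (i); this typically goes through a translation-invariance argument for BMO combined with dominated convergence for the sup over balls, but here it must be carried out in the non-abelian setting using left-translations by the group law \eqref{prod} and the natural dilations $\delta_r$. A secondary difficulty is controlling the truncation error $(1-\eta_R)(f*\phi_\varepsilon)$ on balls that straddle the boundary $\partial B(\mathbf 0,R)$: on such balls the function can jump, so one must exploit condition (iii) to keep the mean oscillation of $f$ itself small on the relevant annular region, and then propagate this bound to $f*\phi_\varepsilon$. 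Once these two estimates are in place, the three hypotheses combine to give $f_{\varepsilon,R}\to f$ in $\bmoc$, establishing $f\in\cmoc$.
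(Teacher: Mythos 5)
First, a remark on the comparison itself: the paper does not prove this lemma at all; it is quoted verbatim from \cite[Theorem 4.4]{CDLW}, where it is established in the general stratified Lie group setting, so your proposal is really being measured against that proof. Your forward direction is the standard easy half and is correct: functions in $C_c^\infty(\cc)$ satisfy (i)--(iii), and the inequality $M(f,B)\le M(f_k,B)+\|f-f_k\|_{\bmoc}$ transfers these properties to BMO-limits. The mollification half of your converse is also sound: condition (i) does yield $\|f-f*\phi_\varepsilon\|_{\bmoc}\to 0$ by the Sarason-type covering argument, adapted to right translations, which move points a distance $\le\varepsilon$ in the left-invariant quasi-metric $\rho$.

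The genuine gap is in the truncation step of the converse, and it is not mere bookkeeping. Conditions (i)--(iii) control only oscillations of $f$, whereas the BMO seminorm of $(1-\eta_R)(f*\phi_\varepsilon)$ on a ball meeting the transition annulus $\{R\le\|g\|\le 2R\}$ is governed by the \emph{size} of $f*\phi_\varepsilon$ there. Concretely, $f\equiv 1$ satisfies (i)--(iii) and lies in $\cmoc$ (constants have zero seminorm), yet $\|f-\eta_R(f*\phi_\varepsilon)\|_{\bmoc}=\|1-\eta_R\|_{\bmoc}\gtrsim 1$ for every $R$, so the stated goal $\|f-f_{\varepsilon,R}\|_{\bmoc}\to 0$ is false for your approximants; and subtracting a single fixed constant from $f$ does not rescue the scheme, since slowly varying examples such as $f(g)=\sin\big(\log\log(e^e+\|g\|)\big)$ satisfy (i)--(iii) while their ``far-field value'' near the annulus changes with $R$. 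The missing idea is an $R$-dependent recentering: one must approximate by $\eta_R\,(f*\phi_\varepsilon-c_R)$ (up to the irrelevant additive constant $c_R$), with $c_R$ the mean of $f$ over the transition annulus, and then prove, via condition (iii) combined with a covering/chaining argument over that annulus and via condition (ii) for balls comparable to or larger than $B({\bf 0},2R)$, that $f-c_R$ is small in mean on the set where $\nabla_H\eta_R\neq 0$, uniformly over all radii of test balls. This recentering and the attendant case analysis over ball scales is precisely the core of Uchiyama's Euclidean argument and of its group version in \cite[Theorem 4.4]{CDLW}; your sketch, which proposes to control the straddling balls by ``the mean oscillation of $f$ itself'' on the annular region, does not reach it.
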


Next, we establish a lemma for the upper and the lower bounds of integrals of
$[b,\,\mathcal C]f_j$ on certain balls $B_j$ in $\cc$ for any $j\in\nn$.
It is easy to show that, for any $f\in L^1_{\rm loc}(\cc)$ and ball $B\subset \cc$,
\begin{equation}\label{equi osci}
M(f;B)\sim\frac1{|B|}\int_B\lf|f(g)-\az_B(f)\r|\,dg
\end{equation}
with the equivalent positive constants independent of $f$ and $B$.


\begin{lem}\label{l-cmo-contra}
Let $p\in(1,\infty)$, $\kappa\in(0,1)$ and $w\in A_p(\cc)$.
Suppose that $b\in{\rm BMO}(\cc)$ is a real-valued function with
$\|b\|_{{\rm BMO}(\cc)}=1$ and there exist $\dz\in(0, \fz)$ and a ball $B_0=B(g_0, r_0)\subset\cc$
with $r_0>0$,
such that
\begin{equation}\label{lower bdd osci}
M(b; B_0)>\dz.
\end{equation}
Then there exist a real-valued function $f_0\in L_w^{p,\,\kappa}(\cc)$,
positive constants $K_0$ large enough, $\wz C_0$, $\wz C_1$ and $\wz C_2$, which are independent of $g_0$ and $r_0$,
such that, for any  integer $k\ge K_0$,
$\|f_0\|_{L_w^{p,\,\kappa}(\cc)}\le \wz C_0$,
\begin{equation}\label{lower upper lpbdd riesz comm}
\int_{B_0^k}\lf|\lf[b, \mathcal C\r]f_0(g)\r|^pw(g)\,dg\geq\wz C_1
\frac{\dz^p}{A_{2}^{kpQ}}\lf[w\lf(B_0\r)\r]^{\kappa-1} w\lf(A_{2}^kB_0\r),
\end{equation}
where $B_0^k:=\widetilde{A_{2}^{k-1}B_0}$ is the ball associates with $A_{2}^{k-1}B_0$  in Lemma \ref{lb},
and
\begin{equation}\label{lower upper lpbdd riesz comm2}
\int_{A_{2}^{k+1}B_0\setminus A_{2}^k B_0}\lf|\lf[b, \mathcal C\r]f_0(g)\r|^pw(g)\,dg\le \wz C_2\frac1{A_{2}^{kpQ}}\lf[w\lf(B_0\r)\r]^{\kappa-1}w\lf(A_{2}^{k}B_0\r).
\end{equation}
\end{lem}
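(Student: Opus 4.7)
The plan is to construct a single test function $f_0$ tailored to the ball $B_0$ and the oscillation hypothesis, and then to establish the lower bound on $B_0^k$ (via Lemma~\ref{lb}) and the upper bound on the annulus $A_2^{k+1}B_0\setminus A_2^kB_0$ (via Theorem~B) separately. Concretely, let $c:=[w(B_0)]^{(\kappa-1)/p}$ and $\gamma:=\frac{1}{|B_0|}\int_{B_0}\mathrm{sgn}(b-b_{B_0})\,du\in[-1,1]$, and set
\[
f_0:=c\,\bigl[\mathrm{sgn}(b-b_{B_0})-\gamma\bigr]\chi_{B_0}.
\]
Then $\mathrm{supp}\,f_0\subset B_0$, $\int f_0=0$, $\|f_0\|_{L^\infty}\le 2c$, and from the definition of the weighted Morrey norm one directly gets $\|f_0\|_{L_w^{p,\kappa}(\cc)}\le\wz C_0$. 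Since $(b-b_{B_0})\,\mathrm{sgn}(b-b_{B_0})=|b-b_{B_0}|$ and $\int_{B_0}(b-b_{B_0})\,du=0$, one finds
\[
\Bigl|\int b\,f_0\,du\Bigr|=c\int_{B_0}|b-b_{B_0}|\,du\ge c\,\delta\,|B_0|.
\]

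For the pointwise analysis, fix $k\ge K_0$ and $g$ with $\rho(g,g_0)\sim A_2^kr_0$ (which holds both on $B_0^k$ and on the annulus $A_2^{k+1}B_0\setminus A_2^kB_0$). Decomposing $K(g,u)=K(g,g_0)+[K(g,u)-K(g,g_0)]$ and invoking the cancellation $\int f_0=0$ gives
\[
[b,\mathcal{C}]f_0(g)=-K(g,g_0)\int b\,f_0+\mathcal{E}(g),
\]
where the smoothness bound of Theorem~B and $\mathrm{supp}\,f_0\subset B_0$ give $|\mathcal{E}(g)|\lesssim c\,\bigl(|b(g)-b_{B_0}|+1\bigr)A_2^{-k(Q+1)}$. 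For the \emph{lower bound} on $B_0^k$, apply Lemma~\ref{lb} to $A_2^{k-1}B_0$, whose twisted companion is exactly $B_0^k$, to obtain an index $i_k\in\{1,\dots,4\}$ for which $K_{i_k}(g_0,g)$ has constant sign on $g\in B_0^k$ with $|K_{i_k}(g_0,g)|\gtrsim A_2^{-kQ}r_0^{-Q}$; the conjugation identity $K(g^{-1})=\overline{K(g)}$ (verified directly from \eqref{kg}) then transfers both properties to $K_{i_k}(g,g_0)$. Projecting the displayed identity onto the $i_k$-th real component, using $|a+b|^p\ge\tfrac{1}{2}|a|^p-C|b|^p$, and integrating on $B_0^k$ yields a main term $\gtrsim c^p\delta^pA_2^{-kpQ}w(B_0^k)$, while the error contribution is bounded by $\lesssim c^pk^pA_2^{-kp(Q+1)}w(A_2^{k+1}B_0)$ thanks to the weighted John--Nirenberg estimate $\int_{A_2^{k+1}B_0}|b-b_{B_0}|^pw\lesssim k^pw(A_2^{k+1}B_0)$ and doubling. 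For $K_0$ large enough relative to $\delta$ the error is absorbed, and the inclusion $B_0^k\subset A_2^{k+1}B_0$ together with Lemma~\ref{lemlw} gives $w(B_0^k)\gtrsim w(A_2^kB_0)$, producing \eqref{lower upper lpbdd riesz comm}.

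The \emph{upper bound} on the annulus requires no sign information: the same decomposition produces $|[b,\mathcal{C}]f_0(g)|\lesssim c\,A_2^{-kQ}\bigl(1+|b(g)-b_{B_0}|A_2^{-k}\bigr)$ for $g\in A_2^{k+1}B_0\setminus A_2^kB_0$, and raising to the $p$-th power, integrating against $w$, and invoking once more the weighted John--Nirenberg estimate together with the doubling bound $w(A_2^{k+1}B_0)\lesssim w(A_2^kB_0)$ yields \eqref{lower upper lpbdd riesz comm2}.

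\textbf{The main obstacle} is the quaternion-valued nature of $K$: the pointwise lower bound of Lemma~\ref{lb} is available only for a single real component $K_{i_k}$, and the index $i_k$ itself depends on $k$. The argument must therefore pass to that component (using $|K|\ge|K_{i_k}|$) and transfer the sign information and the size of $K_{i_k}$ from the pair $(u,h)\in A_2^{k-1}B_0\times B_0^k$---as provided by the lemma---to $(g,g_0)$ with $g\in B_0^k$, which is where the conjugation identity $K(g^{-1})=\overline{K(g)}$ is crucial. All remaining errors are routinely absorbed once $K_0$ is chosen large enough relative to $\delta$ (schematically $A_2^{K_0}\delta\gg K_0$).
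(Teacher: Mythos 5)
Your proposal is correct, and it reaches \eqref{lower upper lpbdd riesz comm}--\eqref{lower upper lpbdd riesz comm2} by a genuinely different decomposition than the paper, although the overall skeleton (a mean-zero test function of height $[w(B_0)]^{(\kappa-1)/p}$ supported in $B_0$, a lower bound on $B_0^k$ from Lemma \ref{lb}, an upper bound on annuli from Theorem B, John--Nirenberg/reverse H\"older plus doubling, and a choice of $K_0$ large relative to $\delta$ so that $k^pA_2^{-kp}\ll\delta^p$ absorbs the error) is the same. The paper builds $f_0$ from the median $\alpha_{B_0}(b)$ (indicators of the level sets plus a constant correction $a_0$), splits $[b,\mathcal C]f_0=[b-\alpha_{B_0}(b)]\mathcal C(f_0)-\mathcal C([b-\alpha_{B_0}(b)]f_0)$, and extracts the lower bound from the second term: the construction forces $[b(\xi)-\alpha_{B_0}(b)]f_0(\xi)\ge0$, and Lemma \ref{lb} supplies a real component $K_i$ of fixed sign with $|K_i|\gtrsim\rho^{-Q}$ on the product of balls, so the quaternion-valued integral cannot cancel; the first term is the error, handled exactly like your $\mathcal E$ via $\int f_0=0$, kernel smoothness, \eqref{b-bmo-bdd} and reverse H\"older. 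You instead take $f_0=c\,[\mathrm{sgn}(b-b_{B_0})-\gamma]\chi_{B_0}$ and freeze the kernel at $g_0$, so the main term is the single quaternion $-K(g,g_0)\int bf_0$ with the oscillation carried by the real scalar $\int bf_0=c\int_{B_0}|b-b_{B_0}|\ge c\delta|B_0|$; then only the modulus bound $|K(g,g_0)|\gtrsim\rho(g,g_0)^{-Q}$ at the single pair $(g,g_0)$, $g\in B_0^k$, is needed, and the sign constancy of $K_{i_k}$ becomes superfluous (your componentwise projection works, but $|A+E|\ge|A|-|E|$ on the full modulus would already do). Your use of the conjugation identity $K(g^{-1})=\overline{K(g)}$, which is indeed immediate from \eqref{kg} and \eqref{s}, to pass from $K_{i}(g_0,g)$ (the orientation in which Lemma \ref{lb} states its bound, first argument in $A_2^{k-1}B_0$, second in $B_0^k$) to $K(g,g_0)$ is a point of care the paper passes over silently, since its own proof also evaluates the kernel with the arguments swapped relative to Lemma \ref{lb}. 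What the paper's median-and-sign argument buys is robustness: it does not require freezing the kernel at one point and so survives when only the product-ball lower bound is available; what your version buys is a cleaner reduction of the quaternionic cancellation issue to a single-point size estimate. The remaining steps (the Morrey bound $\|f_0\|_{L_w^{p,\kappa}(\cc)}\le\widetilde C_0$, the weighted John--Nirenberg estimate, $w(B_0^k)\approx w(A_2^kB_0)$, and the annulus upper bound) coincide in substance with the paper's.
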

\color{black}

\begin{proof}
Set
$$B_{0, 1}=\{g\in B_0:\  b(g)>\az_{B_0}(b)\},\quad 
B_{0, 2}=\{g\in B_0:\  b(g)<\az_{B_0}(b)\}.$$
 We define function $f_0$ as follows:
\begin{equation*}
f^{(1)}_0:=\chi_{B_{0,1}}-\chi_{B_{0,2}},\quad f^{(2)}_0:=a_0\chi_{B_0}
\end{equation*}
and
$$f_0:=\lf[w\lf(B_0\r)\r]^{\frac{\kappa-1}p}\lf(f^{(1)}_0-f^{(2)}_0\r),$$
 where $B_0$ is as in the assumption of Lemma \ref{l-cmo-contra} and $a_0\in\rr$ is a
 constant such that
 \begin{equation}\label{fj proper-2}
 \int_{\cc} f_0(g)\,dg=0.
  \end{equation}
Then, by the definition of $a_0$, \eqref{median value-1} and \eqref{median value-2},
we have $|a_0|\le 1/2$, $\supp(f_0)\subset B_0$ and, for any $g\in B_0$,
\begin{equation}\label{fj proper-1}
 f_0(g)\lf(b(g)-\az_{B_0}(b)\r)\ge0.
 \end{equation}
Moreover, since $|a_0|\le1/2$, we can obtain that, for any $g\in (B_{0,\,1}\cup B_{0,\,2})$,
  \begin{equation}\label{fj proper-3}
   \lf|f_0(g)\r|\sim \lf[w\lf(B_0\r)\r]^{\frac{\kappa-1}p}
  \end{equation}
and hence
\begin{equation*}\begin{split}
\lf\|f_0\r\|_{L_w^{p,\,\kappa}(\cc)}&\ls\sup_{B\subset\cc}
\lf\{\frac{w(B\cap B_0)}{[w(B)]^\kappa}\r\}^{\frac1p}\lf[w\lf(B_0\r)\r]^{\frac{\kappa-1}p}\\
&\ls \sup_{B\subset\cc}\lf[w(B\cap B_0)\r]^{\frac{1-\kappa}p}
\lf[w\lf(B_0\r)\r]^{\frac{\kappa-1}p}\ls 1.
\end{split}\end{equation*}

Observe that, for any $k\in\nn$, we have
\begin{equation}\label{ijk set inclu}
A_{2}^{k-1}B_0\subset (A_{2}+1)B_0^k\subset A_{2}^{k+1}B_0
\end{equation}
and  hence
\begin{equation}\label{ijk meas}
w\lf(B_0^k\r)\sim w\lf(A_{2}^{k}B_0\r).
\end{equation}

Note that
\begin{equation}\label{com equiv}
\lf[b, \mathcal C\r]f=\lf[b-\az_{B_0}(b)\r]\mathcal C(f)-
  \mathcal C\lf(\lf[b-\az_{B_0}(b)\r]f\r).
\end{equation}
 By
Theorem B, \eqref{fj proper-2}, \eqref{fj proper-3}
and the fact that $\rho(g,g_0)\sim\rho(g,\xi)$ for any $g\in B_0^k$ with integer $k\geq2$
and $\xi\in B_0$, we have, for any $g\in B_0^k$,
\begin{align}\label{upper bdd riesz ope}
\lf|\lf[b(g)-\az_{B_0}(b)\r]\mathcal C(f_0)(g)\r|
&=\lf|b(g)-\az_{B_0}(b)\r|\lf|\int_{B_0}\lf[K(g, \xi)-
   K(g, g_0)\r]f_0(\xi)\,d\xi\r|\\
&\le\lf|b(g)-\az_{B_0}(b)\r|\int_{B_0}\lf|K(g, \xi)-
   K(g, g_0)\r|\lf|f_0(\xi)\r|\,d\xi\noz\\
&\ls \lf[w\lf(B_0\r)\r]^{\frac{\kappa-1}p}\lf|b(g)-\az_{B_0}(b)\r|\int_{B_0}
\frac{\rho(\xi,g_0)}{\rho(g,g_0)^{Q+1}}\,d\xi\noz\\
&\ls \lf[w\lf(B_0\r)\r]^{\frac{\kappa-1}p}r^{Q+1}_0\frac{|b(g)-\az_{B_0}(b)|}{\rho(g,g_0)^{Q+1}}\noz\\
&\ls \frac{[w(B_0)]^{\frac{\kappa-1}p}}{A_{2}^{k(Q+1)}}\lf|b(g)-\az_{B_0}(b)\r|.\noz
\end{align}

Since $\|b\|_{{\rm BMO}(\cc)}=1$, by {\eqref{bmoeq}}, for each $k\in \nn$ and ball $B\subset\cc$, 
we have
\begin{align}\label{b-bmo-bdd}
&\int_{A_{2}^{k+1}B}\lf|b(g)-\az_{B}(b)\r|^p\,dg\noz\\
&\ls \int_{A_{2}^{k+1}B}\lf|b(g)-\az_{A_{2}^{k+1}B}(b)\r|^p\,dg
+\lf|A_{2}^{k+1}B\r|\lf|\az_{A_{2}^{k+1}B}(b)-\az_{B}(b)\r|^p\\
&\ls k^p\lf|A_{2}^kB\r|,\noz
\end{align}
where the last inequality is due to the fact that
\begin{align*}
  \lf|\az_{A_{2}^{k+1}B}(b)-\az_{B}(b)\r|\ls
  \lf|\alpha_{A_{2}^{k+1}B}(b)-b_{A_{2}^{k+1}B}\r|
  +\lf|b_{A_{2}^{k+1}B}-b _{B}\r|
  +\lf|b_{B}-\alpha_{B}(b)\r|
  \ls k.
\end{align*}

Since $w\in A_p(\cc)$, there exists $\ez\in(0, \fz)$
such that the reverse H\"older inequality
$$\left[\frac1{|B|}\int_B w(g)^{1+\ez}\,dg\r]^{\frac1{1+\ez}}\ls \frac1{|B|}\int_B w(g)\,dg$$
holds for any ball $B\subset \cc$. Then by the H\"older inequality,
 \eqref{b-bmo-bdd}, \eqref{ijk set inclu} and \eqref{upper bdd riesz ope}, we can deduce that
 there exists a positive constant $\wz C_3$ such that, for any $k\in \nn$,
\begin{align}\label{upper bdd com}
&\int_{B_0^k}\lf|\lf[b(g)-\az_{B_0}(b)\r]\mathcal C(f_0)(g)\r|^pw(g)\,dg\\
&\quad\ls \frac{[w(B_0)]^{\kappa-1}}{A_{2}^{kp(Q+1)}}\int_{A_{2}^{k+1}B_0}\lf|b(g)-\az_{B_0}(b)\r|^pw(g)\,dg\noz\\
&\quad\ls \frac{[w(B_0)]^{\kappa-1}}{A_{2}^{kp(Q+1)}}\lf|A_{2}^{k}B_0\r|\lf[\frac1{|A_{2}^{k+1}B_0|}
\int_{A_{2}^{k+1}B_0}\lf|b(g)-\az_{B_0}(b)\r|^{p(1+\ez)'}\,dg\r]^{\frac1{(1+\ez)'}}\noz\\
&\quad\quad\times\lf[\frac1{|A_{2}^{k+1}B_0|}\int_{A_{2}^{k+1}B_0}w(g)^{1+\ez}\,dg\r]
  ^{\frac1{1+\ez}}\noz\\
&\quad\ls \frac{k^p}{A_{2}^{kp(Q+1)}}\lf[w\lf(B_0\r)\r]^{\kappa-1}w\lf(A_{2}^{k+1}B_0\r)\\
&\quad\leq \wz C_3\frac{k^p}{A_{2}^{kp(Q+1)}}\lf[w\lf(B_0\r)\r]^{\kappa-1}w\lf(A_{2}^{k}B_0\r)\noz.
\end{align}

By Lemma \ref{lb},  \eqref{fj proper-1}, \eqref{fj proper-3},
\eqref{equi osci} and \eqref{lower bdd osci}, for any $g\in B_0^k$, we have
\begin{align*}
\lf|\mathcal C\lf(\lf[b-\az_{B_0}(b)\r]f_0\r)(g)\r|
&=\lf|\int_{B_{j,\,1}\cup B_{j,\,2}}
K(g,\xi)[b(\xi)-\az_{B_0}(b)]f_0(\xi)d\xi\r|\\
&\gs\int_{B_{j,\,1}\cup B_{j,\,2}}
\frac{|[b(\xi)-\az_{B_0}(b)]f_0(\xi)|}{\rho(g,\xi)^{Q}}\,d\xi\\
&\gs\frac{1}{\rho(g,g_0)^{Q}}\lf[w\lf(B_0\r)\r]^{\frac{\kappa-1}p}
\int_{B_0}\lf|b(\xi)-\az_{B_0}(b)\r|\,d\xi\\
&\gs \frac{\dz}{A_{2}^{kQ}} \lf[w\lf(B_0\r)\r]^{\frac{\kappa-1}p}.
\end{align*}
Then together with \eqref{ijk meas}, we obtain that there exists a positive constant $\wz C_4$ such that
\begin{align}\label{low bdd com}
\quad\quad\int_{B_0^k}\lf|\mathcal C\lf(\lf[b-\az_{B_0}(b)\r]f_0\r)(g)\r|^pw(g)\,dg
&\gs \frac{\dz^p}{A_{2}^{kpQ}}\lf[w\lf(B_0\r)\r]^{\kappa-1} w\lf(B_0^k\r)\\
&\ge \wz C_4\frac{\dz^p}{A_{2}^{kpQ}}\lf[w\lf(B_0\r)\r]^{\kappa-1} w\lf(A_{2}^kB_0\r)\noz.
\end{align}

Take $K_0\in(0,\infty)$ large enough such that, for any integer $k\ge K_0$,
$$\wz C_4\frac{\dz^p}{2^{p-1}}-\wz C_3\frac{k^p}{A_{2}^{kp}}\ge \wz C_4\frac{\dz^p}{2^p}.$$
From this, \eqref{com equiv},  \eqref{upper bdd com} and \eqref{low bdd com},
we have
\begin{align*}
&\int_{B_0^k}\lf|[b,\,\mathcal C]f_0(z)\r|^pw(g)\,dg\noz\\
&\quad\ge\frac1{2^{p-1}}\int_{B_0^k}\lf|\mathcal C\lf(\lf[b-\az_{B_0}(b)\r]f_0\r)(g)\r|^pw(g)\,{dg}
-\int_{B_0^k}\lf|\lf[b(g)-\az_{B_0}(b)\r]\mathcal C(f_0)(g)\r|^pw(g)\,dg\noz\\
&\quad\ge\lf(\wz C_4\frac{\dz^p}{2^{p-1}}-\wz C_3\frac{k^p}{A_{2}^{kp}}\r)
\frac1{A_{2}^{kpQ}}\lf[w\lf(B_0\r)\r]^{\kappa-1} w\lf(A_{2}^kB_0\r)\\
&\ge \frac{\wz C_4}{2^p}
\frac{\dz^p}{A_{2}^{kpQ}}\lf[w\lf(B_0\r)\r]^{\kappa-1} w\lf(A_{2}^kB_0\r),
\end{align*}
which imply \eqref{lower upper lpbdd riesz comm}.

On the other hand, since $\supp(f_0)\subset B_0$, by Theorem B, \eqref{fj proper-3},
\eqref{equi osci} and $\|b\|_{{\rm BMO}(\cc)}=1$,
we can obtain that, for any $g\in A_{2}^{k+1}B_0\setminus A_{2}^{k}B_0$,
\begin{align*}
\lf|\mathcal C\lf(\lf[b-\az_{B_0}(b)\r]f_0\r)(g)\r|\ls \lf[w\lf(B_0\r)\r]^{\frac{\kappa-1}p}
\int_{B_0}\frac{|b(\xi)-\az_{B_0}(b)|}{\rho(g,\xi)^Q}\,d\xi
\ls \lf[w\lf(B_0\r)\r]^{\frac{\kappa-1}p}\frac1{A_{2}^{kQ}}.
\end{align*}
Therefore, by \eqref{upper bdd com}  with $B^k_0$ replaced by
$A_{2}^{k+1}B_0\setminus A_{2}^kB_0$, we can deduce that, for any integer $k\ge K_0$,
\begin{align*}
&\int_{A_{2}^{k+1}B_0\setminus A_{2}^kB_0}\lf|[b,\,\mathcal C]f_0(g)\r|^pw(g)\,dg\\
&\quad\ls\int_{A_{2}^{k+1}B_0\setminus A_{2}^kB_0}\lf|\mathcal C
   \lf(\lf[b-\az_{B_0}(b)\r]f_0\r)(g)\r|^pw(g)\,dg\\
  &\quad \quad+\int_{A_{2}^{k+1}B_0\setminus A_{2}^kB_0}\lf|\lf[b(g)-\az_{B_0}(b)\r]
   \mathcal C(f_0)(g)\r|^pw(g)\,dg\\
&\quad\ls \lf[w\lf(B_0\r)\r]^{\kappa-1}\frac1{A_{2}^{kpQ}}w\lf(A_{2}^{k+1}B_0\r)+\frac{k^p}{A_{2}^{kp(Q+1)}}
  \lf[w\lf(B_0\r)\r]^{\kappa-1}w\lf(A_{2}^{k}B_0\r)\\
&\quad\ls \lf[w\lf(B_0\r)\r]^{\kappa-1}\frac1{A_{2}^{kpQ}}w\lf(A_{2}^{k}B_0\r).
\end{align*}
This finishes the proof of Lemma \ref{l-cmo-contra}.
\end{proof}

\begin{rem}
The reality of $b$ is used in the construction of the sets $B_{0, 1}$, $B_{0, 2}$ and the important inequality \eqref {fj proper-1} in the above proof.  Therefore, we require $b$ to be real in Thereom \ref{CZ com bounded}-\ref{CZ com compact}, while such condition is also required in the main results of \cite{TYY,TYY2}. It is an interesting question to see whether Thereom \ref{CZ com bounded}-\ref{CZ com compact} in this article hold for quaternionic valued $b$.
\end{rem}

\color{black}


We also need the following technical result  
to handle the weighted estimate for the necessity of the compactness
of the commutators.

\begin{lem}\label{l-comp contra awy ori}
Let $1<p<\infty$, $0<\kappa<1$, $w\in A_p(\cc)$,
$b\in {\rm BMO}(\cc),\,\delta,\,K_0>0$.
{Assume that $B_j:=B(g_j, r_j)$, $j\in\nn$,  satisfies \eqref {lower bdd osci} and}
the following two conditions:
\begin{itemize}
  \item [{\rm(i)}] For any $ \ell,\,m\in\nn$ and $\ell\neq m$,
    \begin{equation}\label{I-j-pro}
    A_2C_1B_{\ell}\bigcap A_2C_1B_m=\emptyset.
    \end{equation}
    where $C_1:=A_{2}^{K_1}> C_2:=A_{2}^{K_0}$ for some $K_1\in\nn$ large enough.
  \item [{\rm(ii)}] $\{r_j\}_{j\in\nn}$ is either non-increasing or non-decreasing
    in $j$, or there exist positive constants $C_{\mathrm{min}}$
    and $C_{\mathrm{max}}$ such that, for any $j\in\nn$,
    $$C_{\mathrm{min}}\leq r_j\leq C_{\mathrm{max}}.$$
\end{itemize}
{Let $f_j$, $j\in\nn$, be the function constructed in  Lemma \ref{l-cmo-contra} for $B_0$ to be $B_j$.}
Then there exists a positive constant $C$ such that, for any $j,\,m\in\nn$,
$$\lf\|[b,\mathcal C]f_j- [b,\mathcal C]f_{j+m}\r\|_{L_w^{p,\,\kappa}(\cc)}\geq C.$$
\end{lem}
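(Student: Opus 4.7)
The plan is to find, for each fixed pair $j, m$, a \emph{single} ball $B \subset \cc$ on which $|[b, \mathcal{C}] f_j|$ dominates $|[b, \mathcal{C}] f_{j+m}|$ in the weighted $L^p$ sense; the reverse triangle inequality in $L^p(B, w\,dg)$ then forces the Morrey norm of the difference to be bounded below by a positive constant. To choose $B$, I split into the two sub-cases $r_j \le r_{j+m}$ and $r_j > r_{j+m}$ and always pick the auxiliary ball attached by Lemma~\ref{lb} to the \emph{smaller} of $B_j$ and $B_{j+m}$, namely $B := B_j^{K_0}$ in the first sub-case and $B := B_{j+m}^{K_0}$ in the second; the two situations are symmetric, so I treat only $r_j \le r_{j+m}$ and $B = B_j^{K_0}$.

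The lower bound is immediate from Lemma~\ref{l-cmo-contra} applied to the ball $B_j$ at level $k = K_0$: together with the equivalence $w(B_j^{K_0}) \sim w(A_2^{K_0} B_j)$ from \eqref{ijk meas}, it gives
\[
  [w(B)]^{-\kappa}\int_B \bigl|[b,\mathcal{C}]f_j(g)\bigr|^p w(g)\,dg \;\ge\; c_0\, \delta^p,
\]
with $c_0 > 0$ depending only on $K_0$, $p$, $\kappa$ and $[w]_{A_p(\cc)}$. For the upper bound on $[b,\mathcal{C}]f_{j+m}$ over the same $B$, the separation hypothesis \eqref{I-j-pro} together with $r_j \le r_{j+m}$ ensures $\rho(g, g_{j+m}) \gtrsim A_2^{K_1+1} r_{j+m}$ for every $g \in B_j^{K_0}$, so each such $g$ lies in the far field of $B_{j+m}$. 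Splitting $[b,\mathcal{C}]f_{j+m} = (b - \alpha_{B_{j+m}}(b))\mathcal{C}(f_{j+m}) - \mathcal{C}((b - \alpha_{B_{j+m}}(b))f_{j+m})$ and using $\int f_{j+m} = 0$ with the size and smoothness estimates of Theorem~B yields the pointwise bounds
\[
  |\mathcal{C}(f_{j+m})(g)| \lesssim [w(B_{j+m})]^{\frac{\kappa-1}{p}}\lf(\frac{r_{j+m}}{\rho(g,g_{j+m})}\r)^{Q+1},
\]
\[
  |\mathcal{C}((b-\alpha_{B_{j+m}}(b))f_{j+m})(g)| \lesssim [w(B_{j+m})]^{\frac{\kappa-1}{p}}\lf(\frac{r_{j+m}}{\rho(g,g_{j+m})}\r)^{Q},
\]
where the latter uses $\|b\|_{\bmoc} = 1$.

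The main obstacle is to integrate these pointwise bounds against $w$ on $B_j^{K_0}$, divide by $[w(B_j^{K_0})]^\kappa$, and show the result tends to zero uniformly in $j, m$ as $K_1 \to \infty$: $B_j^{K_0}$ and $B_{j+m}$ can be very far apart, so their $w$-masses are not directly comparable. I would enclose both in the auxiliary ball $\wz B := B(g_j, C\rho(g_j, g_{j+m}))$ and apply both sides of Lemma~\ref{lemlw} to derive $w(B_j^{K_0})/w(B_{j+m}) \le C(r_j/r_{j+m})^{Q\sigma}(\rho(g_j,g_{j+m})/r_{j+m})^{Q(p-\sigma)}$ for some $\sigma \in (0,1)$. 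Combined with the kernel decay $(r_{j+m}/\rho)^{pQ}$ from above, this leaves a net factor $A_2^{-(K_1+1)Q[\sigma(1-\kappa)+p\kappa]}$ with a strictly positive exponent. The remaining BMO ``log factor'' coming from $\int_{B_j^{K_0}}|b - \alpha_{B_{j+m}}(b)|^p w$, bounded via $|\alpha_{B_j^{K_0}}(b) - \alpha_{B_{j+m}}(b)| \lesssim \log(\rho(g_j,g_{j+m})/r_j)$ (cf.\ \eqref{b-bmo-bdd}), grows strictly more slowly than this power decay and is therefore absorbed. Fixing $K_1$ large enough forces the $[b,\mathcal{C}]f_{j+m}$ contribution to $[w(B)]^{-\kappa}\int_B|\cdot|^p w$ to be at most $(c_0/2^p)\delta^p$, and the reverse triangle inequality in $L^p(B,w\,dg)$ then yields $\|[b,\mathcal{C}]f_j - [b,\mathcal{C}]f_{j+m}\|_{L_w^{p,\,\kappa}(\cc)} \ge (c_0^{1/p}/2)\delta$, independent of $j$ and $m$.
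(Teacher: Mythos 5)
Your strategy is essentially the paper's: split the commutator as in \eqref{com equiv}, get the lower bound on a region attached to $B_j$ from Lemma \ref{l-cmo-contra}, kill the cross term $[b,\mathcal C]f_{j+m}$ by far-field kernel estimates plus $A_p$ growth of the weight, and conclude with the reverse triangle inequality; only the bookkeeping differs. The paper tests on the large ball $C_1B_j$ (so its lower bound $\sim \dz C_1^{-Q\kappa p_0/p}$ itself decays in $C_1$ while the cross term decays strictly faster, since $p_0<p$), reduces to $\{r_j\}$ non-increasing, and compares weights via the one-sided doubling $w(\lambda B_{j+m})\ls \lambda^{Qp_0}w(B_{j+m})$; you test on the small twisted ball $B_j^{K_0}$ (lower bound independent of $K_1$, by \eqref{lower upper lpbdd riesz comm} and \eqref{ijk meas}), split per pair according to $r_j\le r_{j+m}$ or not (which also removes the need for the monotonicity reduction), and compare weights through a common enclosing ball using both inequalities of Lemma \ref{lemlw}. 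Both routes are viable and yield a constant independent of $j,m$ once $K_1$ is fixed large.

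One step is stated too loosely and, taken literally, would fail: you claim the BMO factor, bounded by $\log\big(\rho(g_j,g_{j+m})/r_j\big)$, is absorbed by the net power $A_{2}^{-(K_1+1)Q[\sigma(1-\kappa)+p\kappa]}$. In your sub-case $r_j\le r_{j+m}$ the kernel and weight decay are in the variable $\rho(g_j,g_{j+m})/r_{j+m}\ge A_{2}^{K_1+1}$, whereas the logarithm is in $\rho(g_j,g_{j+m})/r_j$, which is not controlled by $K_1$ (let $r_j/r_{j+m}\to0$ with $\rho(g_j,g_{j+m})\approx A_{2}^{K_1+1}r_{j+m}$), so uniformity in $j,m$ is lost if you discard the ratio of radii. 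The fix is already contained in your own weight estimate: keep the factor $(r_j/r_{j+m})^{Q\sigma(1-\kappa)}\le 1$ rather than dropping it, write $\log\big(\rho/r_j\big)=\log\big(\rho/r_{j+m}\big)+\log\big(r_{j+m}/r_j\big)$, absorb the second summand into $(r_j/r_{j+m})^{Q\sigma(1-\kappa)}$ and the first into the power of $\rho/r_{j+m}$, exactly as the paper absorbs its logarithm in \eqref{log decrease} (there the mismatch does not occur because monotonicity gives $r_{j+m}\le r_j$, so the decay and the logarithm live in the same variable). With that correction, and the symmetric adjustment in the sub-case $r_j>r_{j+m}$, your argument gives the desired uniform lower bound.
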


\begin{proof}
Without loss of generality, we may assume that $\|b\|_{{\rm BMO}(\cc)}\!=1$
and $\{r_j\}_{j\in\nn}$ is non-increasing.
{Let  $\wz C_1$, $\wz C_2$ be as in Lemma \ref{l-cmo-contra} such that \eqref {lower upper lpbdd riesz comm} and \eqref {lower upper lpbdd riesz comm2} hold for $B_j, f_j$ uniformly. }
associated with $\{B_j\}_{j\in\nn}$.
Recall that, for any $w\in A_p(\cc)$ with $p\in(1,\fz)$, there exists $p_0\in (1, p)$
such that $w\in A_{p_0}(\cc)$.
By \eqref{lower upper lpbdd riesz comm}, \eqref{ijk meas}, Lemma \ref{lemlw} with $w\in A_{p_0}(\cc)$, we find that, for any $j\in\nn$,
\begin{align}\label{riesz-comm-lower-bdd-on-A2Ij}
&\lf[\int_{C_1 B_j}\lf|[b,\mathcal C]f_j(g)\r|^pw(g)\,dg\r]
  ^{1/p}\lf[w\lf(C_1 B_j\r)\r]^{-\kappa/p}\\
&\quad\geq \lf[w\lf(C_1 B_j\r)\r]^{-\kappa/p}\lf\{\int_{B_{j}^{K_0-1}}
\lf|[b,\mathcal C]f_j(g)\r|^pw(g)\,dg\r\}^{1/p}\noz\\
&\quad\ge \lf[w\lf(C_1 B_j\r)\r]^{-\kappa/p}\lf\{\wz C_1 \delta^p
\frac{[w(B_{j})]^{\kappa -1}w(A_{2}^{{K_0-1}}B_{j})}{A_{2}^{p{(K_0-1)}Q}}\r\}^{1/p}\noz\\
&\quad\gtrsim \lf[w\lf(C_1 B_j\r)\r]^{-\kappa/p}\lf\{ \delta^p
\frac{[w(B_j)]^\kappa}{A_{2}^{Q(p-\sigma)(K_0-1)}}\r\}^{1/p}\noz\\
&\quad\ge C_3 C_1^{-\frac{Q\kappa}pp_0}\lf[w\lf(B_j\r)\r]^{-\kappa/p}\dz
 \lf[w\lf(B_{j}\r)\r]^{\kappa/p}= C_3\dz C_1^{-\frac{Q\kappa}pp_0}\noz
\end{align}
for some positive constant $C_3$ independent of $\delta$ and $C_1$.
We next prove that, for any $j,\,m\in\nn$,
\begin{equation}\label{riesz-comm-upper-bdd-on-A2Ij}
\lf[\int_{C_1 B_j}\lf|[b,\mathcal C]f_{j+m}(g)\r|^pw(g)\,dg\r]^{1/p}
\lf[w\lf(C_1 B_j\r)\r]^{-\kappa/p}\le \frac 12C_3\dz C_1^{-\frac{Q\kappa}pp_0}.
\end{equation}
Indeed, since $\supp (f_{j+m})\subset B_{j+m}$, from \eqref{fj proper-3}, \eqref{equi osci},
\eqref{I-j-pro} and $\|b\|_{{\rm BMO}(\cc)}=1$, it follows that, for any $g\in C_1B_j$,
\begin{align*}
\lf|\mathcal C\lf(\lf[b-\alpha_{B_{j+m}}(b)\r]f_{j+m}\r)(g)\r|
&\ls \lf[w\lf(B_{j+m}\r)\r]^{\frac{\kappa-1}p}\int_{B_{j+m}}|K(g,\xi)|
  \lf|b(g)-\alpha_{B_{j+m}}(b)\r|\,d\xi\\
&\ls \lf[w\lf(B_{j+m}\r)\r]^{\frac{\kappa-1}p}\frac{r_{j+m}^Q}{\rho(g_j,g_{j+m})^Q}
\end{align*}
and hence
\begin{align}\label{riesz-comm-upper-bdd-on-A2Ij-i}
&\lf\{\int_{C_1 B_j}\lf|\mathcal C\lf(\lf[b-\alpha_{B_{j+m}}(b)\r]f_{j+m}\r)(g)\r|^p
w(g)\,dg\r\}^{1/p}\lf[w\lf(C_1 B_j\r)\r]^{-\kappa/p}\\
&\quad\ls \lf[w\lf(B_{j+m}\r)\r]^{\frac{\kappa-1}p}\frac{r_{j+m}^Q}{\rho(g_j,g_{j+m})^Q}
\lf[w\lf(C_1 B_j\r)\r]^{\frac{1-\kappa}p}\noz\\
&\quad\ls \lf[w\lf(B_{j+m}\r)\r]^{\frac{\kappa-1}p}\frac{r_{j+m}^Q}{\rho(g_j,g_{j+m})^Q}
\lf[w\lf(\frac{\rho(g_j,g_{j+m})}{r_{j+m}} B_{j+m}\r)\r]^{\frac{1-\kappa}p}\noz\\
&\quad\ls \frac{r_{j+m}^Q}{\rho(g_j,g_{j+m})^Q}\lf(\frac{\rho(g_j,g_{j+m})}{r_{j+m}}\r)
  ^{Q\frac{1-\kappa}p p_0}
\sim \lf(\frac{\rho(g_j,g_{j+m})}{r_{j+m}}\r)^{-\frac{Q\kappa}{p}p_0+\frac{Qp_0}{p}-Q}\noz.
\end{align}

Moreover, from Theorem B and \eqref{fj proper-3},
we deduce that, for any $g\in C_1B_j$,
\begin{align}\label{riz-upper-bdd-case-iii}
\lf|\mathcal C(f_{j+m})(g)\r|&\le\int_{B_{j+m}}\lf|K(g,\xi)-
K(g,g_{j+m})\r|\lf|f_{j+m}(\xi)\r|\,d\xi\\
&\ls\int_{B_{j+m}}\frac{r_{j+m}}{\rho(g_j,g_{j+m})^{Q+1}}\lf|f_{j+m}(\xi)\r|\,d\xi\noz\\
&\ls \lf[w\lf(B_{j+m}\r)\r]^{\frac{\kappa-1}p}\frac{r_{j+m}^{Q+1}}{\rho(g_j,g_{j+m})^{Q+1}}\noz.
\end{align}
Then, by \eqref{riz-upper-bdd-case-iii}, the fact that
$\{r_j\}_{j\in\nn}$ is non-increasing in $j$,
the  H\"older and the reverse H\"older inequalities, we conclude that
\begin{align}\label{riesz-comm-upper-bdd-on-A2Ij-ii}
&\lf\{\int_{C_1 B_j}\lf|\lf[b(g)-\alpha_{B_{j+m}}(b)\r]\mathcal C(f_{j+m})(g)\r|^pw(g)\,dg\r\}^{1/p}\lf[w\lf(C_1 B_j\r)\r]^{-\kappa/p}\\
&\quad\ls \lf[w\lf(B_{j+m}\r)\r]^{\frac{\kappa-1}p}\frac{r_{j+m}^{Q+1}}{\rho(g_j,g_{j+m})^{Q+1}}
  \lf[w\lf(C_1 B_j\r)\r]^{-\kappa/p}\lf[\int_{C_1 B_j}\lf|b(g)-\alpha_{B_{j+m}}(b)\r|^p
  w(g)\,dg\r]^{1/p}\noz\\
&\quad\ls \lf[w\lf(B_{j+m}\r)\r]^{\frac{\kappa-1}p}\frac{r_{j+m}^{Q+1}}{\rho(g_j,g_{j+m})^{Q+1}}
  \lf[w\lf(C_1 B_j\r)\r]^{\frac{1-\kappa}p}\lf(\log\frac{\rho(g_j,g_{j+m})}{r_{j+m}}
+\log\frac{\rho(g_j,g_{j+m})}{r_{j}}\r)\noz\\
&\quad\ls \lf[w\lf(B_{j+m}\r)\r]^{\frac{\kappa-1}p}\frac{r_{j+m}^{Q+1}}{\rho(g_j,g_{j+m})^{Q+1}}
  \lf[w\lf(\frac{\rho(g_j,g_{j+m})}{r_{j+m}} B_{j+m}\r)\r]^{\frac{1-\kappa}p}
  \log\frac{\rho(g_j,g_{j+m})}{r_{j+m}}\noz\\
&\quad\ls \lf(\frac{\rho(g_j,g_{j+m})}{r_{j+m}}\r)^{-\frac{Q\kappa}{p}p_0+\frac{Qp_0}{p}-Q-1}
   \log\frac{\rho(g_j,g_{j+m})}{r_{j+m}}\noz.
\end{align}
Notice that, for $C_1$ large enough, by \eqref{I-j-pro},
we know that $\rho(g_j,g_{j+m})$ is also large enough and hence
\begin{align}\label{log decrease}
\lf(\frac{\rho(g_j,g_{j+m})}{r_{j+m}}\r)^{-1}\log\frac{\rho(g_j,g_{j+m})}{r_{j+m}}\lesssim1.
\end{align}
Therefore, from \eqref{riesz-comm-upper-bdd-on-A2Ij-i},
\eqref{riesz-comm-upper-bdd-on-A2Ij-ii}, \eqref{log decrease} and $p_0\in(1,p)$,
we deduce that, for $C_1$ large enough,
\begin{align*}
&\lf\{\int_{C_1 B_j}\lf|[b,\,\mathcal C](f_{j+m})(g)\r|^pw(g)\,dg\r\}^{1/p}
 \lf[w\lf(C_1 B_j\r)\r]^{-\kappa/p}\\
&\quad\le\lf\{\int_{C_1 B_j}\lf|\mathcal C\lf(\lf[b-\alpha_{B_{j+m}}(b)\r]f_{j+m}\r)(g)\r|^p
 w(g)\,dg\r\}^{1/p}\lf[w\lf(C_1 B_j\r)\r]^{-\kappa/p}\\
&\quad\quad+\lf\{\int_{C_1 B_j}\lf|\lf[b(g)-\alpha_{B_{j+m}}(b)\r]\mathcal C(f_{j+m})(g)\r|^p
 w(g)\,dg\r\}^{1/p}\lf[w\lf(C_1 B_j\r)\r]^{-\kappa/p}\\
&\quad\lesssim\lf(\frac{\rho(g_j,g_{j+m})}{r_{j+m}}\r)^{-\frac{Q\kappa}{p}p_0+\frac{Qp_0}{p}-Q}
\lf[1+\lf(\frac{\rho(g_j,g_{j+m})}{r_{j+m}}\r)^{-1}\log\frac{\rho(g_j,g_{j+m})}{r_{j+m}}\r]\\
&\quad\lesssim\lf(\frac{\rho(g_j,g_{j+m})}{r_{j+m}}\r)^{-\frac{Q\kappa}{p}p_0+\frac{Qp_0}{p}-Q}
       \lesssim\lf[\frac{A_2C_1(r_j+r_{j+m})}{r_{j+m}}\r]^{-\frac{Q\kappa}{p}p_0+\frac{Qp_0}{p}-Q}\\
 &\quad \lesssim C_1^{-\frac{Q\kappa}{p}p_0+\frac{Qp_0}{p}-Q}
       \le \frac12 C_3 \delta C_1^{-\frac{Q\kappa}{p}p_0}.
\end{align*}
This finishes the proof of \eqref{riesz-comm-upper-bdd-on-A2Ij}. By \eqref{riesz-comm-lower-bdd-on-A2Ij} and \eqref{riesz-comm-upper-bdd-on-A2Ij},
we know that, for any $j,\,m\in\nn$ and $C_1$ large enough,
\begin{align*}
&\lf\{\int_{C_1 B_j}\lf|[b,\,\mathcal C](f_j)(g)-[b,\,\mathcal C](f_{j+m})(g)\r|^p
  w(g)\,dg\r\}^{1/p}\lf[w\lf(C_1 B_j\r)\r]^{-\kappa/p}\\
&\quad\ge\lf\{\int_{C_1 B_j}\lf|[b,\,\mathcal C](f_j)(g)\r|^pw(g)\,dg\r\}^{1/p}
  \lf[w\lf(C_1 B_j\r)\r]^{-\kappa/p}\\
&\quad\quad-\lf\{\int_{C_1 B_j}\lf|[b,\,\mathcal C](f_{j+m})(g)\r|^p
  w(g)\,dg\r\}^{1/p}\lf[w\lf(C_1 B_j\r)\r]^{-\kappa/p}
  \ge\frac{1}{2}C_3\dz C_1^{-\frac{Q\kappa}pp_0}.
\end{align*}
This finishes the proof of Lemma \ref{l-comp contra awy ori}.
\end{proof}

\begin{proof}[Proof of Theorem \ref{CZ com compact}(ii)]
Without loss of generality, we may assume that $\|b\|_{{\rm BMO}(\cc)}=1$.
To show $b\in{\rm {VMO}}(\cc)$, noticing that $b\in {\rm BMO}(\cc)$ is a real-valued function,
we can use a contradiction argument via Lemmas \ref{l-cmo char}, \ref{l-cmo-contra}
and \ref{l-comp contra awy ori}.
Now observe that, if $b\notin {\rm {VMO}}(\cc)$, then $b$ does not satisfy at least one of
(i) through (iii) of Lemma \ref{l-cmo char}.
We show that $[b, \mathcal C]$ is not compact on $L_w^{p,\,\kappa}(\cc)$ in any of
the following three cases.

{\bf Case i)} $b$ does not satisfy Lemma \ref{l-cmo char}(i).
Then there exist $\dz\in(0, \fz)$ and a sequence
$$\{B^{(1)}_j\}_{j\in\nn}:=\{B(g_j^{(1)},r_j^{(1)})\}_{j\in\nn}$$ of balls in $\cc$
satisfying \eqref{lower bdd osci} and that $r_j^{(1)}\to0$ as $j\to\fz$.
We further consider the following two subcases.

{\bf Subcase (i)} There exists a positive constant $M$ such that $0\leq \|g^{(1)}_{j}\|<M$
for all $g^{(1)}_{j}$, $j\in\nn$. That is, $g^{(1)}_{j}\in B_0:=B(0, M)$, $\forall j\in\nn$.
Let  $\{f_j\}_{j\in\nn}$ be associated with $\{B_j\}_{j\in\nn}$,
$\wz C_1$, $\wz C_2$, $K_0$ and $C_2$ be as in Lemmas \ref{l-cmo-contra} and
\ref{l-comp contra awy ori}.
Let $p_0\in(1,p)$ be such that $w\in A_{p_0}(\cc)$ and $C_4:=A_2^{K_2}> C_2=A_2^{K_0}$
for $K_2\in\nn$ large enough such that
\begin{align}\label{C5}
C_5:=\wz C_1 \hat C_{2}\delta^p A_{2}^{QK_0(\sigma-p)}>
2\frac{\wz C_2}{1-A_{2}^{Q(p_0-p)}}\frac{\hat C}{A_{2}^{QK_2(p-p_0)}},
\end{align}
where $\hat C_2$ and $\hat C$ are as in Lemma \ref{lemlw}.
Since $|r_j^{(1)}|\to 0$ as $j\to\fz$ and $\{g^{(1)}_j\}_{j\in\nn}\subset B_0$,
we may choose a subsequence 

of $\{B^{(1)}_j\}_{j\in\nn}$, for simplicity, we still denote it by $\{B^{(1)}_j\}_{j\in\nn}$,
\color{black}
 such that, for any $j\in\nn$,
\begin{equation}\label{descreasing interval}
\frac{|B_{j+1}^{(1)}|}{|B_{j}^{(1)}|}<\frac1{C_4^Q}\quad{\rm and\quad} w\big(B_{j+1}^{(1)}\big)\le w\big(B_{j}^{(1)}\big).
\end{equation}
For fixed $\ell,\ m\in \mathbb N$, let
$$\mathcal J:=C_4B^{(1)}_{\ell}\setminus C_2B^{(1)}_{\ell},
\quad\mathcal J_1:=\mathcal J\setminus C_4B^{(1)}_{\ell+m}
\quad\textrm{and}\quad\mathcal J_2:=\cc\setminus C_4B^{(1)}_{\ell+m}.$$
Notice that
$$\mathcal J_1\subset \lf[\lf(C_4B^{(1)}_{\ell}\r)\cap\mathcal J_2\r]
\quad{\rm and}\quad \mathcal J_1=\mathcal J\cap \mathcal J_2.$$
 We then have
\begin{align}\label{low lpbdd comparing com}
&\lf\{\int_{C_4B_{\ell}^{(1)}}\lf|\lf[b,\mathcal C\r](f_{\ell})(g)
-\lf[b,\mathcal C\r](f_{\ell+m})(g)\r|^pw(g)\,dg\r\}^{1/p}\\
&\quad\ge\lf\{\int_{\mathcal J_1}\lf|\lf[b,\mathcal C\r](f_{\ell})(g)
-\lf[b,\mathcal C\r](f_{\ell+m})(g)\r|^pw(g)\,dg\r\}^{1/p}\noz\\
&\quad\ge\lf\{\int_{\mathcal J_1}\lf|\lf[b,\mathcal C\r](f_{\ell})(g)\r|^pw(g)\,dg\r\}^{1/p}
-\lf\{\int_{\mathcal J_2}\lf|\lf[b,\mathcal C\r](f_{\ell+m})(g)\r|^pw(g)\,dg\r\}^{1/p}\noz\\
&\quad=\lf\{\int_{\mathcal J\cap \mathcal J_2}
\lf|\lf[b,\mathcal C\r](f_{\ell})(g)\r|^pw(g)\,dg\r\}^{1/p}
-\lf\{\int_{\mathcal J_2}\lf|\lf[b,\mathcal C\r](f_{\ell+m})(g)\r|^pw(g)\,dg\r\}^{1/p}\noz\\
&\quad=:{\rm F_1}-{\rm F_2}.\noz
\end{align}

We first consider the term ${\rm F_1}$. Assume that
$E_{{\ell+m}}:=\mathcal J\setminus\mathcal J_2\not=\emptyset$.
Then $E_{j_\ell}\subset C_4B^{(1)}_{\ell+m}$.
Thus, by \eqref{descreasing interval}, we have
\begin{align}\label{ee1}
|E_{\ell}|\le C_4^Q\lf|B^{(1)}_{\ell+m}\r|<\lf|B^{(1)}_{j_{\ell}}\r|.
\end{align}
Now let
$$B^{(1)}_{\ell,\,k}:=\widetilde{A_{2}^{k-1}B^{(1)}_{\ell}},$$
be the ball associates with $A_{2}^{k-1}B^{(1)}_{\ell}$  in Lemma \ref{lb}.
Then, from \eqref{ee1}, we have
$$\lf|B^{(1)}_{\ell,\,k}\r|=A_{2}^{Q(k-1)}\left|B^{(1)}_{\ell}\r|> |E_{\ell}|. $$
By this, we further know that there exist finite
$\{B^{(1)}_{\ell,\,k}\}_{k=K_0}^{K_2-2}$ intersecting $E_{\ell}$.
By \eqref{lower upper lpbdd riesz comm} and Lemma \ref{lemlw}, 
we conclude that
\begin{align}\label{F_1^p}
{\rm F}_1^p&\ge\sum_{k=K_0,\,B^{(1)}_{\ell,\,k}\cap E_{\ell}=\emptyset}^{K_2-2}
\int_{B^{(1)}_{\ell,\,k}}\lf|[b,\mathcal C](f_{\ell})(g)\r|^pw(g)\,dg\\
&\ge\wz C_1\dz^p\sum_{k=K_0,\,B^{(1)}_{\ell,\,k}\cap E_{\ell}=\emptyset}^{K_2-2}
\frac{[w(B^{(1)}_{\ell})]^{\kappa -1}w(A_{2}^{k}B^{(1)}_{\ell})}{A_{2}^{Qkp}}\noz\\
&\ge\wz C_1 \hat C_2\dz^p\sum_{k=K_0,\,B^{(1)}_{\ell,\,k}\cap E_{\ell}=\emptyset}^{K_2-2}\frac{[w(B^{(1)}_{\ell})]^{\kappa}}{A_{2}^{Qk(p-\sigma)}}\noz\\
&\ge \wz C_1 \hat C_2\dz^pA_{2}^{QK_0(\sigma-p)}\lf[w\lf(B^{(1)}_{\ell}\r)\r]^{\kappa}
=C_5\lf[w\lf(B^{(1)}_{\ell}\r)\r]^{\kappa}.\noz
\end{align}
If  $E_{\ell}:=\mathcal J\setminus\mathcal J_2=\emptyset$,
the inequality above is still true.

Moreover, from \eqref{lower upper lpbdd riesz comm2}, Lemma \ref{lemlw},
\eqref{C5} and \eqref{descreasing interval}, we deduce that
\begin{align}\label{F_2^p}
\quad{\rm F}^p_2&\le\sum_{k=K_2}^{\fz}
\int_{A_{2}^{k+1}B_{\ell+m}^{(1)} \setminus A_{2}^{k}B_{\ell+m}^{(1)}}
\lf|[b,\mathcal C](f_{\ell+m})(g)\r|^pw(g)\,dg\noz\\
&\le\wz C_2\sum_{k=K_2}^\fz
\frac{[w(B^{(1)}_{\ell+m})]^{\kappa -1}w(A_{2}^{k}B^{(1)}_{\ell+m})}{A_{2}^{Qkp}}\\
&\le\wz C_2\sum_{k=K_2}^\fz\frac{\hat C}{A_{2}^{Qk(p-p_0)}}
  \lf[w\lf(B^{(1)}_{\ell+m}\r)\r]^{\kappa}\noz\\
&\le \frac{\wz C_2}{1-A_{2}^{Q(p_0-p)}}\frac{\hat C}{A_{2}^{QK_2(p-p_0)}}
  \lf[w\lf(B^{(1)}_{\ell+m}\r)\r]^{\kappa}\noz\\
&<\frac {C_5}2\lf[w\lf(B^{(1)}_{\ell+m}\r)\r]^{\kappa}
 \le\frac{C_5}2\lf[w\lf(B^{(1)}_{\ell}\r)\r]^{\kappa}.\noz
\end{align}
By \eqref{low lpbdd comparing com}, \eqref{F_1^p} and \eqref{F_2^p}, we obtain
\begin{align*}
&\lf\{\int_{C_4B_{\ell}^{(1)}}\lf|[b,\mathcal C](f_{\ell})(g)-
  [b,\mathcal C](f_{\ell+m})(g)\r|^pw(g)\,dg\r\}^{1/p}\noz\\
&\quad\ge C_5^{1/p}\lf[w\lf(B^{(1)}_{\ell}\r)\r]^{\kappa/p}-
  \lf(\frac{C_5}{2}\r)^{1/p}\lf[w\lf(B^{(1)}_{\ell}\r)\r]^{\kappa/p}
  \gs \lf[w\lf(B^{(1)}_{\ell}\r)\r]^{\kappa/p}.
\end{align*}
Thus, $\{[b,\,\mathcal C]f_j\}_{j\in\nn}$ is not relatively compact in $L_w^{p,\,\kappa}(\cc)$,
which implies that
$[b,\,\mathcal C]$ is not compact on $L_w^{p,\,\kappa}(\cc)$. Therefore,
$b$ satisfies condition (i) of Lemma \ref{l-cmo char}.

{\bf Subcase (ii)} 
There exists a subsequence of
of $\{B^{(1)}_j\}_{j\in\nn}=\{B(g_j^{(1)},r_j^{(1)})\}_{j\in\nn}$, for simplicity, we still denote it by $\{B^{(1)}_j\}_{j\in\nn}$,  such that $|g^{(1)}_{j}|\to\infty$ as $j\to\infty$.
In this subcase, by $|B_{j}^{(1)}|\to 0$ as $j\to\infty$,
we can take a mutually disjoint subsequence of $\{B^{(1)}_{j}\}_{j\in\nn}$,
still denoted by $\{B^{(1)}_{j}\}_{j\in\nn}$, satisfying \eqref{I-j-pro} as well.
\color{black}
This, via Lemma \ref{l-comp contra awy ori}, implies that $[b, \mathcal C]$ is not compact on $L_w^{p,\,\kappa}(\cc)$, which is a contradiction to our assumption.
Thus, $b$ satisfies condition (i) of Lemma \ref{l-cmo char}.

{\bf Case ii)} If $b$ does not satisfy condition (ii) of Lemma \ref{l-cmo char}.
In this case, there exist $\delta\in(0, \fz)$ and a sequence
$\{B^{(2)}_j\}_{j\in\nn}$ of balls in $\cc$ satisfying \eqref{lower bdd osci}
and that $|r_{B^{(2)}_j}|\rightarrow\infty$ as $j\rightarrow\infty$.
We further consider the following two subcases as well.

{\bf Subcase (i)} 

There exists an infinite subsequence 
of $\{B^{(2)}_j\}_{j\in\nn}$, for simplicity, we still denote it by $\{B^{(2)}_j\}_{j\in\nn}$, 
\color{black}
and a point $g_0\in\cc$ such that, for any $j\in\nn$,
$g_0\in A_{2}C_1B^{(2)}_{j}$. Since  $|r_{B^{(2)}_{j}}|\rightarrow\infty$ as $j\rightarrow\infty$, it follows that there exists a subsequence, { still denoted by}
$\{B_{j}^{(2)}\}_{j\in\nn}$, such that, for any $j\in\nn$,
\begin{equation}\label{increasing interval}
\frac{|B_{j}^{(2)}|}{|B_{j+1}^{(2)}|}<\frac1{C_4^Q}.
\end{equation}
Observe that
$2A_2C_1B_{j}^{(2)}\subset 2A_2C_1B_{j+1}^{(2)}$ for any $j_\ell\in \nn$ and hence
\begin{align}\label{increasing interval weight}
w\lf(2A_2C_1B_ {j+1} ^{(2)}\r)\geq w\lf(2A_2C_1B_{j}^{(2)}\r),
\quad M\lf(b;2A_2C_1B_{j}\r)>\frac{\delta}{8A_{2}^2C_1^2}.
\end{align}
We can use a similar method as that used in Subcase (i) of Case i) and
redefine our sets in a reversed order. That is, for any fixed $\ell,\,k\in\nn$, let
\begin{equation*}\begin{split}
\widetilde{\mathcal J}:&=2A_2C_4C_1B_{\ell+k}^{(2)}\setminus 2A_2C_2C_1B_{\ell+k}^{(2)},\\
\widetilde{\mathcal J_1}:&=\widetilde{\mathcal J}\setminus 2A_2C_4C_1B_{\ell}^{(2)},\\
\widetilde{\mathcal J_2}:&=\cc\setminus 2A_2C_4C_1B_{\ell}^{(2)}.
\end{split}\end{equation*}
As in Case i), by Lemma \ref{l-cmo-contra}, \eqref{increasing interval}
and \eqref{increasing interval weight},
we conclude that the commutator $[b, \mathcal C]$ is not compact on $L_w^{p,\,\kappa}(\cc)$.
This contradiction implies that $b$ satisfies condition (ii) of Lemma \ref{l-cmo char}.

{\bf Subcase (ii)} For any $A_2\in\cc$, the number of $\{A_2C_1B^{(2)}_j\}_{j\in\nn}$
containing {$g_0$} is finite. In this subcase, for each ball
$B^{(2)}_{j_0}\in \{B^{(2)}_j\}_{j\in\nn}$,
the number of $\{A_2C_1B^{(2)}_j\}_{j\in\nn}$ intersecting $A_2C_1B^{(2)}_{j_0}$ is finite.
Then we take a mutually disjoint subsequence
of $\{B^{(2)}_{j}\}_{j\in\nn}$ satisfying
\eqref{lower bdd osci} and \eqref{I-j-pro}. From Lemma \ref{l-comp contra awy ori},
we deduce that $[b, \mathcal C]$ is not compact on $L_w^{p,\,\kappa}(\cc)$.
Thus, $b$ satisfies condition (ii) of Lemma \ref{l-cmo char}.

{\bf Case iii)} Condition (iii) of Lemma \ref{l-cmo char} does not hold for $b$.
Then there exists $\delta>0$ such that for any $r>0$, there exists $B\subset \cc\setminus B(0,r)$ with $M(b,B)>\delta$. 
As in \cite[p. 1661]{CDLW}, for the $\delta$ above, there exists a sequence $\{B^{(3)}_j\}_j$ of balls such that for any $j$,
\begin{align}\label{mbb}
M(b, B^{(3)}_j)>\delta,
\end{align}
and for any $i\neq m$,
\begin{align}\label{bb}
\gamma_1B^{(3)}_i\cap \gamma_1B^{(3)}_m=\emptyset,
\end{align}
for sufficiently large $\gamma_1$.


Since, by  Case i) and ii), $\{B^{(3)}_j\}_{j\in\nn}$ satisfies the conditions (i)
and (ii) of Lemma \ref{l-cmo char}, it follows that there exist positive constants
$C_{\mathrm{min}}$ and $C_{\mathrm{max}}$ such that
$$C_{\mathrm{min}}\leq r_j\leq C_{\mathrm{max}}, \quad \forall j\in\nn.$$
By this and Lemma \ref{l-comp contra awy ori}, we conclude that, if $[b, \mathcal C]$
is compact on $L_w^{p,\,\kappa}(\cc)$, then $b$  also  satisfies condition (iii) of
Lemma \ref{l-cmo char}. This finishes the proof of Theorem \ref{CZ com compact}(ii)
and hence of Theorem \ref{CZ com compact}.
\end{proof}

\bigskip
{\bf Acknowledgement:} 
The authors would like to thank Dr. Ji Li for his helpful advice and comments.
This work was supported by Natural Science Foundation
of China (Grant Nos. 11671185, 11701250 and 11771195) and Natural Science Foundation of Shandong
Province (Grant Nos. ZR2018LA002 and ZR2019YQ04).

\bibliographystyle{amsplain}

\begin{thebibliography}{10}

\bibitem{alesker1} S. Alesker, \textit{Non-commmutative linear algebra and plurisubharmonic functions of quaternionic variables},
Bull. Sci. Math. \textbf{127} (2003), no. 1, 1--35.

\bibitem{AraiMizuhara97MathNachr}
H. Arai and T. Mizuhara, \textit{Morrey spaces on spaces of homogeneous type and estimates for
  {$\square_b$} and the Cauchy-Szeg\"o projection},  Math. Nachr. \textbf{185} (1997), 5--20.




\bibitem{CDLWW} D. C. Chang, X. T. Duong, J. Li, W. Wang and Q. Y. Wang, \textit{An explicit formula of Cauchy--Szeg\"{o} kernel for quaternionic Siegel upper half space
and applications},  Indiana Univ. Math. J. to appear.


\bibitem{CM08}{ D. C. Chang and I. Markina},
 \textit{ Quaternion $H$-type group and differential operator $\Delta_\lambda$},  Sci. China Ser. A \textbf{51} (2008), no. 4, 523--540.



\bibitem{CMW} D. C. Chang, I. Markina and W. Wang, \textit{On the Cauchy--Szeg\"o kernel for quaternion Siegel upper
half-space},  Complex Anal. Oper. Theory \textbf{7} (2013), no. 5, 1623--1654.

\bibitem{CDW12CJM}
Y. Chen, Y. Ding and X. Wang, \textit{Compactness of commutators for singular integrals on
Morrey spaces},  Canad. J. Math. \textbf{64} (2012), 257--281.


\bibitem{CDLW} P. Chen, X. T. Duong, J. Li and Q. Y. Wu,  \textit{Compactness of Riesz transform commutator on stratified Lie groups}, J. Funct. Anal. \textbf{277} (2019),  no. 6, 1639--1676.



\bibitem{Chow} W. L. Chow,  \textit{\"Uber Systeme von linearen partiellen differentialgleichungen erster ordnung},  Math. Ann. \textbf{117}  (1939), 98--105.

\bibitem{Chr} M. Christ, H. Liu and A. Zhang, \textit{Sharp Hardy-Littlewood-Sobolev inequalities on quaternionic Heisenberg groups}, Nonlinear Anal. \textbf{130} (2016), 361--395.



\bibitem  {CSSS} {F. Colombo, I. Sabadini,
F. Sommen and D. Struppa}, \textit{Analysis of Dirac systems and computational algebra},
Prog. Math. Phys., vol. 39,  Boston, Birkh\"auser, 2004.


\bibitem{CRT} R. E. Castillo, J. C. Ramos Fern\'andez, and E. Trousselot,  \textit{Functions of bounded $(\varphi, p)$ mean oscillation}, Proyecciones \textbf{27} (2008), 163--177.


\bibitem{DiFazioRagusa91BUMIA}
G. Di Fazio and M. A. Ragusa, \textit{Commutators and Morrey spaces},  Boll. Un. Mat. Ital. A \textbf{5} (1991), no. 7, 323--332.


\bibitem{FS} G. B. Folland and E. M. Stein, \textit{Hardy spaces on homogeneous groups}, Math. Notes, vol. 28, Princeton University Press, University of Tokyo Press, Princeton N. J., Tokyo, 1982.


\bibitem{DGKLWY} X. Duong, R. Gong, J. Li, M. J. Kuffner, B. Wick and D. Yang, \textit{Two weight commutators on spaces of homogeneous type and applications},  J. Geom. Anal. to appear.


\bibitem{Ho}
 K. P. Ho, \textit{Characterizations of BMO spaces by ${A_p}$ weights and $p$-convexity}, Hiroshima Math. J. \textbf{41} (2011), 153--165.

\bibitem{HSZ}
G. Hu, X. Shi and Q. Zhang, \textit{Weighted norm inequalities for the maximal singular integral operators on spaces of homogeneous type},  
J. Math. Anal. Appl. \textbf{336} (2007), no. 1, 1--17.


 \bibitem{Ivanov2} S. Ivanov, I. Minchev and D. Vassilev, \textit{Quaternionic contact Einstein structures and the quaternionic contact Yamabe problem},  Mem. Amer. Math. Soc.  vol. 231,  no. 1086, Amer. Math. Soc., Providence, RI,  2014.



\bibitem{IV} S. Ivanov and D. Vassilev, \textit{Extremals for the Sobolev inequality and the quaternionic contact Yamabe problem}, World Scientific Publishing Co. Pte. Ltd., Hackensack, NJ, 2011.




\bibitem{KM}
V. Kokilashvili and A. Meskhi, \textit{The boundedness of sublinear operators in weighted Morrey spaces defined on spaces of homogeneous type}, Function Spaces and Inequalities, Springer, Singapore, 2017, pp. 193--211.



\bibitem{KomoriShirai09MathNachr}
Y. Komori and S. Shirai, \textit{Weighted Morrey spaces and a singular integral operator},  Math. Nachr. \textbf{282} (2009), 219--231.


\bibitem{Koranyi1} A. Kor\'anyi and H. M. Reimann, \textit{Quasiconformal mappings on the Heisenberg group}, Invent. Math. \textbf{80} (1985), 309--338.

\bibitem{Koranyi2} A. Kor\'anyi and H. M. Reimann,  \textit{Foundations for the theory of quasiconformal mappings on the Heisenberg group},  Adv. Math. \textbf{111} (1995), no. 1, 1--87.


\bibitem{Kro}  M. Kronz, \textit{Some function spaces on spaces of homogeneous type},  Manuscripta Math. \textbf{106} (2001), no. 2, 219--248.


\bibitem{MaoSunWu16ActaMathSin}
S. Mao, L. Sun and H. Wu, \textit{Boundedness and compactness for commutators of bilinear Fourier multipliers},  Acta Math. Sinica (Chin. Ser.) \textbf{59} (2016), 317--334.




\bibitem{Pansu} P. Pansu,  \textit{M\'etriques de Carnot-Carath\'eodory et quasiisom\'etries des espaces sym\'etriques de rang un},  Ann. of Math. \textbf{129} (1989), no. 1, 1--60.



 \bibitem{shi-wang16}  Y. Shi and  W. Wang,  \textit{On conformal qc geometry, spherical qc manifolds and convex cocompact subgroups of ${\rm Sp}(n+1,1)$},  
 Ann. Global Anal. Geom.  \textbf{49}  (2016),  no. 3, 271--307.

 \bibitem{shi-wang} Y. Shi and  W. Wang,
 \textit{The Szeg\"o kernel for $k$-CF functions on the quaternionic Heisenberg group},   Appl.  Anal. \textbf{14}  (2017), 2474--2492.



\bibitem{TYY} J. Tao, Da. Yang and Do. Yang, \textit{Boundedness and compactness characterizations of Cauchy integral commutators on Morrey spaces},  Math. Meth. Appl. Sci.
    \textbf{42} (2019), 1631--1651.

\bibitem{TYY2} J. Tao, Da. Yang and Do. Yang, \textit{Beurling-Ahlfors commutators on weighted Morrey spaces and applications to Beltrami equations},  Potential Anal. (2020), doi:10.1007/s11118-019-09814-7.



 \bibitem{wan-wang} D.  Wan and W. Wang,  \textit{On quaternionic Monge-Amp\`{e}re operator, closed positive currents and Lelong-Jensen type formula on the quaternionic space}, 
Bull. Sci. Math. \textbf{ 141} (2017), 267--311.



\bibitem{Wa11}  W. Wang, The tangential Cauchy--Fueter complex on the quaternionic Heisenberg group, {\it J. Geom. Phys.}, \textbf{61} (2011), 363--380.

\bibitem{wang19} W. Wang,
\textit{The Neumann problem for the $k$-Cauchy-Fueter complex over $k$-pseudoconvex domains in $ \mathbb{R}^4$ and the $L^2$ estimate}, J. Geom. Anal. \textbf{29} (2019), 1233--1258.

\end{thebibliography}

\end{document}